\def\th@plain{%
  \thm@notefont{}% same as heading font
  \itshape % body font
}
\def\th@definition{%
  \thm@notefont{}% same as heading font
  \normalfont % body font
}
\begin{document}
\title{$K_r$-saturated Graphs and the Two Families Theorem}
\author{Asier Calbet}
\affil{School of Mathematical Sciences \\ Queen Mary, University of London \\ Mile End Road, London E1 4NS \\ United Kingdom
\\ \href{mailto:a.calbetripodas@qmul.ac.uk}{\nolinkurl{a.calbetripodas@qmul.ac.uk}}}
\date{}
\maketitle

\newtheorem{theorem}{Theorem}
\newtheorem{lemma}{Lemma}
\newtheorem{conj}{Conjecture}
\newtheorem{q}{Question}
\newtheorem{probl}{Problem}
\theoremstyle{definition}
\newtheorem{definition}{Definition}
\newtheorem{rem}{Remark}
\theoremstyle{plain}

\begin{abstract}

Given a graph $H$, we say that a graph $G$ is $H$-saturated if $G$ contains no copy of $H$ but adding any new edge to $G$ creates a copy of $H$. Let $sat(n,K_r,t)$ be the minimum number of edges in a $K_r$-saturated graph on $n$ vertices with minimum degree at least $t$. Day showed that for fixed $r \geq 3$ and $t \geq r-2$, $sat(n,K_r,t)=tn-c(r,t)$ for large enough $n$, where $c(r,t)$ is a constant depending on $r$ and $t$, and proved the  bounds 

$$ 2^t t^{3/2} \ll_r c(r,t) \leq t^{t^{2t^2}} $$

\noindent for fixed $r$ and large $t$. In this paper we show that for fixed $r$ and large $t$, the order of magnitude of $c(r,t)$ is given by $c(r,t)=\Theta_r \left(4^t t^{-1/2} \right)$. Moreover, we investigate the dependence on $r$, obtaining the estimates

$$ \frac{4^{t-r}}{\sqrt{t-r+3}} + r^2 \ll c(r,t) \ll \frac{4^{t-r} \min{(r,\sqrt{t-r+3})}}{\sqrt{t-r+3}} + r^2 \ . $$
 
\noindent We further show that for all $r$ and $t$, there is a finite collection of graphs such that all extremal graphs are blow-ups of graphs in the collection.

Using similar ideas, we show that every large $K_r$-saturated graph with $e$ edges has a vertex cover of size $O(e / \log e)$, uniformly in $r \geq 3$. This strengthens a previous result of Pikhurko. We also provide examples for which this bound is tight.

A key ingredient in the proofs is a new version of Bollob\'as's Two Families Theorem.

\end{abstract}

\section{Introduction}\label{intro}

Given a graph $H$, we say that a graph $G$ is $H$-saturated if it is maximally $H$-free, meaning $G$ contains no copy of $H$ but adding any new edge to $G$ creates a copy of $H$. The saturation problem is to determine, or at least estimate, the saturation number $sat(n,H)$, defined (for graphs $H$ with at least one edge) to be the minimum number of edges in an $H$-saturated graph $G$ on $n$ vertices. We are usually interested in fixed $H$ and large $n$. The saturation problem is dual to the Tur\'an forbidden subgraph problem of determining the extremal number $ex(n,H)$, defined to be the maximum number of edges in an $H$-saturated graph $G$ on $n$ vertices. However, much less is known for the saturation problem than for the  Tur\'an problem. See \cite{Reference 1} for a survey of the saturation problem.  \\

For certain $H$ the saturation number is known exactly, however. In particular, Erd\H{o}s, Hajnal and Moon showed (Theorem 1 in \cite{Reference 2}) that for $r \geq 2$ and $n \geq r-2$, $sat(n,K_r)=(r-2)n-\binom{r-1}{2}$ and that the unique extremal graph consists of a $K_{r-2}$ fully connected to an independent set of size $n-(r-2)$ (for $n<r$, $K_n$ is the unique $K_r$-saturated graph on $n$ vertices). Note that for large $n$ this graph contains many vertices of degree $r-2$. Moreover, this is the smallest possible degree of a vertex in a $K_r$-saturated graph on $n \geq r-1$ vertices. One might therefore ask what happens if we forbid vertices of degree $r-2$, and more generally, vertices of small degree. This leads us to define $sat(n,K_r,t)$ for $r \geq 3$, $t \geq r-2$ and large enough $n$ to be the minimum number of edges in a $K_r$-saturated graph $G$ on $n$ vertices with $\delta(G) \geq t$ (more precisely, such graphs exist if and only if $n \geq (r-1)t / (r-2) $). This quantity (or rather the one obtained by replacing the condition $\delta(G) \geq t$ with $\delta(G)=t$) was first considered by Duffus and Hanson (second paragraph of the Introduction in \cite{Reference 3}). \\

Proving a conjecture of Bollob\'as, Day showed that for fixed $r$ and $t$, $sat(n,K_r,t)=tn-c(r,t)$ for large enough $n$, where $c(r,t)$ is some constant depending on $r$ and $t$. He raised the problem of estimating $c(r,t)$ and proved the bounds

$$ 2^t t^{3/2} \ll_r c(r,t) \leq t^{t^{2t^2}} $$

for fixed $r$ and large $t$ (Theorem 1.2 and the previous two paragraphs in \cite{Reference 4})(see section \ref{asymptotic notation} for precise definitions of asymptotic notation). In this paper we show that for fixed $r$ and large $t$, the order of magnitude of $c(r,t)$ is given by $c(r,t) \asymp_r 4^t t^{-1/2}$. Moreover, we investigate the dependence on $r$.

\begin{theorem}\label{c est}

For all integers $r \geq 3$ and $t \geq r-2$,

$$ \frac{4^{t-r}}{\sqrt{t-r+3}} + r^2 \ll c(r,t) \ll \frac{4^{t-r} \min{(r,\sqrt{t-r+3})}}{\sqrt{t-r+3}} + r^2 \ . $$

\end{theorem}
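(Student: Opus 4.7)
I would split the proof of Theorem \ref{c est} into a construction for the lower bound $c(r,t)\gg 4^{t-r}/\sqrt{t-r+3}+r^2$ and a Two Families Theorem argument for the matching upper bound $c(r,t)\ll 4^{t-r}\min(r,\sqrt{t-r+3})/\sqrt{t-r+3}+r^2$. Both halves revolve around a set system of size essentially $\binom{2(t-r+3)}{t-r+3}\asymp 4^{t-r}/\sqrt{t-r+3}$ hanging off a fixed $K_{r-2}$.

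For the lower bound, the plan is to exhibit, for every large $n$, a $K_r$-saturated graph $G$ on $n$ vertices with $\delta(G)\geq t$ and $e(G)\leq tn-\Omega(4^{t-r}/\sqrt{t-r+3})-\Omega(r^2)$. The construction has the shape ``small core $H$ plus large cloud of pendants''. Inside $H$ sits a fixed $K_{r-2}$, call it $K$, together with a ``selection gadget'' on roughly $2(t-r+3)$ vertices, and each pendant $v$ is joined to $K$ and to a $(t-r+2)$-subset $T_v$ of the gadget, so that $N(v)=K\cup T_v$. The family $\{T_v\}$ is a maximum pairwise-intersecting family of $(t-r+2)$-subsets of a $2(t-r+3)$-set, of size roughly $\binom{2(t-r+3)}{t-r+3}\asymp 4^{t-r}/\sqrt{t-r+3}$, and the internal edges of the gadget are chosen so that each $N(v)$ is $K_{r-1}$-free (keeping $G$ itself $K_r$-free) while $N(v)\cap N(w)\supseteq K$ witnesses $K_r$-saturation at every pendant pair. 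The $r^2$ term comes from an Erd\H{o}s--Hajnal--Moon-style attachment of $K$ to the remainder of $G$. The principal task here is verifying saturation at all non-pendant--pendant non-edges and $K_r$-freeness of the whole construction.

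For the upper bound, fix any $K_r$-saturated $G$ on $n$ vertices with $\delta(G)\geq t$, and let $V_t$ be its set of vertices of degree exactly $t$. Using $K_r$-saturation at a non-neighbor of $v$, I would canonically decompose each neighborhood $N(v)$, $v\in V_t$, into a $K_{r-2}$ ``clique part'' and a ``complement'' of size $t-r+2$, then group the vertices of $V_t$ by the resulting type. Non-edges between vertices of distinct types should force a cross-intersection condition on the complement sets matching the hypothesis of the new TFT with parameters of order $t-r$, producing the central binomial bound $4^{t-r}/\sqrt{t-r+3}$. The factor $\min(r,\sqrt{t-r+3})$ reflects two competing estimates on the number of distinct clique parts: $r$ from a direct pigeonhole, or $\sqrt{t-r+3}$ from the TFT itself.

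The main obstacle will be the upper bound, specifically making the type decomposition canonical enough for the new TFT to apply and avoiding the spurious $4^r$ factor that a naive application of the classical Bollob\'as theorem would incur. This is precisely the role of the new version of the Two Families Theorem advertised in the abstract: it must refine the classical $\binom{a+b}{a}$ bound enough to produce sharp constants when the two families arise from the $K_{r-2}$-plus-complement decomposition above.
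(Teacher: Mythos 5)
Your overall strategy---an explicit construction for the lower bound and a Two Families argument for the upper bound---is of course the right one, and you have correctly identified that the key quantity is $\binom{2(t-r+3)}{t-r+3}\asymp 4^{t-r}/\sqrt{t-r+3}$. But both halves of your plan diverge significantly from what actually works, and the upper-bound half has a genuine gap.

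On the lower bound: the paper does not build a direct $r$-parameter construction. It proves $c(3,t)\gg 4^t t^{-1/2}$ using a single explicit graph $G_t$ (vertex set $S\cup R$ with $|S|=2(t-1)$, $R$ the family of \emph{all} $(t-1)$-subsets of $S$, $R$-vertices joined to the elements of their set and to the unique complementary set), and then transfers to general $r$ via Lemma~\ref{c ineq}, which simply adds $r-3$ conical vertices. Note that the family of ``$T_v$'s'' in the paper is \emph{not} pairwise intersecting---it is the full family, with the non-intersecting (i.e.\ complementary) pairs made adjacent. Your version with a maximum intersecting family attached to a fixed $K_{r-2}$ still leaves unresolved exactly the points you flag: you never specify the gadget's internal edges, you only discuss saturation at pendant--pendant non-edges, and it is not obvious how to make the gadget simultaneously avoid creating a $K_r$ with $K$ while witnessing saturation at gadget--gadget and pendant--gadget non-edges. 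The conical-vertex reduction side-steps all of this and is much cleaner; you should be aware it exists.

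On the upper bound, your plan has a structural gap. You propose to consider $V_t$, the vertices of degree exactly $t$, partition them into finitely many ``types'' according to a canonical $K_{r-2}$-plus-complement decomposition of $N(v)$, and apply the TFT to bound the number of types. Even granting the decomposition, bounding the number of \emph{types} does not bound $t|G|-e(G)$: blow-ups of a degree-$t$ vertex are all of the same type and $|V_t|$ is unbounded, so you need an argument explaining why repeated types do not hurt, and you need to account for vertices of degree $>t$. The paper handles this by a quite different mechanism (Lemma~\ref{tech}): it builds an \emph{incremental} sequence $(v_i)_{i\in I}$ of vertices chosen so that each $\Gamma(v_i)$ has small overlap with the union $\cup_{k<i}\Gamma(v_k)$, together with witness vertices $w_i$, and sets $A_i=\Gamma(v_i)$, $B_i=\Gamma(w_i)\cap\cup_{k<i}\Gamma(v_k)$. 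The quantity to be bounded is not $|I|$ but the \emph{weighted} sum $\sum_i (t-|A_i\cap\cup_{k<i}A_k|)(t-|B_i|)$, and the weights are essential. This is precisely why the new Theorem~\ref{TF} is formulated with the hypothesis $|A_i\cap\cup_{k<i}A_k|\leq a$ rather than $|A_i|\leq a$: its novelty is the relaxed hypothesis tailored to the incremental construction, not a sharpened constant in the classical $\binom{a+b}{a}$ bound as you suggest. You are also missing an essential ingredient: the clique conditions $w(A_i\cap B_i)\leq r-3$ and $w(A_i\cap B_j)>r-3$ are not cardinality conditions, and one needs Lemma~\ref{Graph Theory Lemma} (a consequence of Hajnal's lemma on maximum cliques) to pass from clique number to cardinality before any version of the TFT applies. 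Finally, the $\min(r,\sqrt{t-r+3})$ factor does not arise from ``two competing estimates on the number of distinct clique parts''; it arises because after the reduction one must sum $\binom{2(t-r)+O(1)}{t-r-c+O(1)}$ over roughly $\min(r,t-r)$ values of a parameter $c$, and there are two natural ways to bound that sum.
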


\newtheorem*{repth1}{Theorem \ref{c est}}

Note that the lower and upper bounds are the same up to the $\min{(r,\sqrt{t-r+3})}$ factor. We conjecture that the lower bound is tight. The $r^2$ terms cannot be absorbed into the other terms, because when $t$ is not much larger than $r-2$, the $r^2$ terms dominate. Indeed, for such $t$ Theorem \ref{c est} gives $c(r,t) \asymp r^2$. \\

We further characterise the extremal graphs for large $n$, and more generally, $K_r$-saturated graphs $G$ on $n$ vertices with $\delta(G) \geq t$ and $e(G)$ not much larger than $sat(n,K_r,t)$.

\begin{theorem}\label{blow-up}

Let $r \geq 3$,  $t \geq r-2$ and $k$ be integers. Then there is a finite collection $C$ of graphs such that the following holds. Let $G$ be a graph on $n$ vertices. Then $G$ is $K_r$-saturated with $\delta(G) \geq t$ and $e(G) = tn+k$ if and only if $G$ is obtained from a graph in $C$ by blowing up non-adjacent vertices of degree $t$.

\end{theorem}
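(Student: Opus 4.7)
The plan is to extract from any such $G$ a canonical ``core'' graph $G_0$ of size bounded in terms of $r,t,k$, and to take $C$ to be the (finite) set of isomorphism types of possible cores. Call two vertices $u,v \in V(G)$ \emph{twins} if $uv \notin E(G)$ and $N(u) = N(v)$; since two adjacent vertices cannot have equal neighborhoods, twin equivalence on $V_t := \{v: \deg(v)=t\}$ has independent sets as its classes. Let $G_0$ be the quotient. By construction no two degree-$t$ vertices of $G_0$ are twins. Collapsing a class of $m$ degree-$t$ twins removes $m-1$ vertices and exactly $t(m-1)$ edges, so $e(G_0) - t|V(G_0)| = e(G) - tn = k$. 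Moreover $\delta(G_0) \geq t$, and $G_0$ remains $K_r$-saturated: for any non-edge $uv$ of $G_0$, lift to a non-edge $u'v'$ of $G$, and observe that the $r-2$ clique vertices in $N(u') \cap N(v')$ witnessing saturation project to distinct vertices of $G_0$, since pairwise adjacent vertices cannot be twins.

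The main task is to bound $n_0 := |V(G_0)|$. The handshake identity $\sum_v(\deg(v)-t) = 2k$ forces $|V_{>t}(G_0)| \leq 2k$, so it suffices to bound $|V_t(G_0)|$. In $G_0$ each $v \in V_t$ has a distinct neighborhood of size $t$, and for any non-adjacent pair $u,v \in V_t$ the intersection $N(u) \cap N(v)$ contains a $K_{r-2}$. I would apply the new version of Bollob\'as's Two Families Theorem advertised in the abstract: to each $v \in V_t(G_0)$ attach a pair $(A_v, B_v)$, with $A_v$ consisting of $v$ together with a fixed $(r-2)$-clique chosen inside $N(v)$, and $B_v$ carved from the vertices that can witness distinctness of neighborhoods, and then use the saturation together with the no-twin property of the core to verify the cross-intersection $A_u \cap B_v \neq \emptyset$ for every $u \neq v$. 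The conclusion is $|V_t(G_0)| \leq f(r, t, |V_{>t}|)$, hence $n_0 \leq N(r, t, k)$, giving only finitely many isomorphism types of cores.

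The converse is a direct verification: blowing up a degree-$t$ vertex $v$ of $G_0 \in C$ into an independent set of $m \geq 1$ twins yields a graph whose edge count goes up by $t(m-1)$ as $|V|$ goes up by $m-1$, with minimum degree still at least $t$, with no copy of $K_r$ (independent set blow-ups preserve $K_r$-freeness), and with $K_r$-saturation intact because every new non-edge either lifts a non-edge of $G_0$, whose saturating $K_{r-2}$ lifts directly, or joins two twins, in which case the new $K_r$ is built from the two endpoints and any $K_{r-2}$ inside $N(v)$ (which exists by applying saturation of $G_0$ at any non-neighbor of $v$, the very small cores being absorbed into $C$). The principal obstacle in the whole argument is the Two Families step: one must force $A_u \cap B_v \neq \emptyset$ for \emph{every} distinct $u,v \in V_t(G_0)$, not just the non-adjacent pairs, while keeping $|A_v|$ and $|B_v|$ bounded independently of $n_0$ so that the binomial upper bound is useful. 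For non-adjacent pairs the $K_{r-2}$ in the common neighborhood suffices, but adjacent pairs demand extracting a genuine separating witness from the no-twin condition, and this is precisely where the new version of the Two Families Theorem is needed.
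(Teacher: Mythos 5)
Your twin-collapse setup has three genuine problems, and together they leave the main bound unproved.

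First, the handshake identity is wrong. With $e(G_0)=tn_0+k$ you get $\sum_v \deg(v)=2tn_0+2k$, so
\[
\sum_{v\in V(G_0)}\bigl(\deg(v)-t\bigr)=2e(G_0)-tn_0=tn_0+2k,
\]
not $2k$. Since $\delta(G_0)\geq t$ would only give each summand $\geq 0$, the most you can extract is $|V_{>t}(G_0)|\leq tn_0+2k$, which grows with $n_0$ and gives no absolute bound on the number of high-degree vertices. So the step ``$|V_{>t}(G_0)|\leq 2k$'' fails, and with it the reduction of the whole problem to bounding $|V_t(G_0)|$.

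Second, the quotient $G_0$ (collapsing each class of degree-$t$ twins to a single vertex) need not have $\delta(G_0)\geq t$: if $w$ is adjacent to a twin class of size $m$, then $w$ loses $m-1$ neighbours in the quotient. Concretely, for $r=3$, $t=2$ the graph $K_{2,n}$ is $K_3$-saturated with $\delta=2$; collapsing the $n$ mutual twins of degree $2$ gives $K_{2,1}$, which has $\delta=1<t$ and is itself not a blow-up source for the required family (its trivial blow-up has $\delta<t$). The paper avoids this by not collapsing all the way: in its minimality argument it keeps $t$ representatives of each twin class (the set $W$ with $|W|=t$ in the proof that $C(r,t,k)$ is finite), precisely so that the common neighbours $N$ retain degree at least $t$. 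You would need the same fix, but then the ``canonical core'' is no longer unique in an obvious way and must be phrased as a minimality condition, as the paper does.

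Third, the Two Families step is not actually carried out, and the obstacle you identify is real and not resolved by merely invoking Theorem~\ref{TF}. With $A_v$ of size $\leq r-1$ (a fixed $(r-2)$-clique plus $v$) and $B_v$ ``carved from witnesses'', you still have to exhibit sets of \emph{bounded} size with $A_u\cap B_v\neq\emptyset$ for \emph{all} distinct $u,v\in V_t(G_0)$, including adjacent pairs; for adjacent $u,v$ the set $N(u)\cap N(v)$ is $K_{r-2}$-free by $K_r$-freeness, so the saturating clique is not available and nothing forces a bounded-size separating set. The paper sidesteps exactly this by never trying to bound $|V_t|$ directly by a Two Families count: Lemma~\ref{tech} builds a maximal sequence $(v_i,w_i)$ in which the cross conditions are arranged to hold only for $i<j$ (so always for non-adjacent pairs, where the $K_{r-2}$ witness exists), uses this plus degree accounting to bound $|V(G)\setminus (R\setminus T)|$, and then bounds $|R\setminus T|$ purely combinatorially via the minimality of the core (at most $t$ degree-$t$ vertices per neighbourhood $N\subseteq V(G)\setminus(R\setminus T)$). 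This decomposition into a small ``skeleton'' $T$ plus a controlled independent remainder $R\setminus T$ is the idea your proposal is missing, and it is what makes the set-pair bound usable.
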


\newtheorem*{repth2}{Theorem \ref{blow-up}}

(Blowing up a vertex refers to replacing it by several copies - see section \ref{bu} for the precise definition of a blow-up.) \\

Using ideas similar to those in the proof of Theorem \ref{c est} one can prove further structural results about $K_r$-saturated graphs. Pikhurko showed that for fixed $r$ and $t$, every $K_r$-saturated graph has only a bounded number of vertices of degree at most $t$ that are adjacent to some other vertex of degree at most $t$ (Theorem 8 in \cite{Reference 5}). He then used this result to deduce that for fixed $r$ and large $n$, every $K_r$-saturated graph on $n$ vertices with $O(n)$ edges has a vertex cover (a set of vertices intersecting every edge) of size $O_r(n \log \log n / \log n)$  (Lemma 9 in \cite{Reference 5}). In fact, the same argument shows more generally that for fixed $r$ and large $e$, every $K_r$-saturated graph with $e$ edges has a vertex cover of size $O_r(e \log \log e / \log e)$. \\

Let $f(r,t)$ be the maximum over all $K_r$-saturated graphs of the number of vertices of degree at most $t$ that are adjacent to some other vertex of degree at most $t$. Pikhurko's proof gives 

$$ f(r,t) \leq e^{2(r-2)t \log t + O_r(t)} $$ 

for fixed $r \geq 3$ and large $t$. In this paper we show that for fixed $r \geq 3$ and large $t$ the order of magnitude of $f(r,t)$ is given by $f(r,t) \asymp_r 4^t t^{-1/2}$. Moreover, we investigate the dependence on $r$.

\begin{theorem}\label{vert bound}

For all integers $r \geq 3$ and $t \geq r-2$,

$$ \frac{4^{t-r}}{\sqrt{t-r+3}} + r \ll f(r,t) \ll \frac{4^{t-r} \min{(r,\sqrt{t-r+3})}}{\sqrt{t-r+3}} + r \ . $$

\end{theorem}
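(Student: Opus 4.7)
The plan is to adapt the machinery developed for Theorem \ref{c est}: a Two Families Theorem argument for the upper bound, and a construction reused from $c(r,t)$ for the lower bound. The guiding principle is that the very same clusters of adjacent minimum-degree vertices that let a $K_r$-saturated graph save edges (driving $c(r,t)$ up) are precisely the structure being counted in $f(r,t)$.

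\textbf{Upper bound.} Let $G$ be $K_r$-saturated and list as $v_1,\dots,v_m$ the vertices of degree at most $t$ that have at least one neighbor of degree at most $t$; the goal is to bound $m$. For each $i$ fix a low-degree neighbor $u_i$ of $v_i$ and a non-neighbor $w_i$ of $v_i$. By $K_r$-saturation, $N(v_i)\cap N(w_i)$ contains a $K_{r-2}$, say $B_i$, with $|B_i|=r-2$; set $A_i \subseteq N(v_i)$ to be a suitable subset of size at most $t-(r-2)$ obtained by stripping off $B_i$ (and possibly the clique coming from $u_i$). The pairs $(A_i,B_i)$ should satisfy $A_i\cap B_i = \emptyset$ and the cross-intersection condition of the new Two Families Theorem established earlier in the paper: for $i\ne j$, the saturation-forced clique $B_j$ must meet $A_i$ because otherwise one could find a $K_r$ in $G$ using $v_i$, $u_i$ and the clique $B_j$, contradicting $K_r$-freeness. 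Applying the new Two Families Theorem with parameters $|A_i|\le t-r+2$ and $|B_i|=r-2$ then yields the claimed bound, with the $\min(r,\sqrt{t-r+3})$ factor coming directly from the sharper form of the inequality.

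\textbf{Lower bound.} For the lower bound I would reuse the construction that proves the lower bound in Theorem \ref{c est}. That construction is a $K_r$-saturated graph $H$ in which the ``savings'' over $tn$ are realised by clusters of adjacent vertices of degree exactly $t$; by inspection the number of degree-$t$ vertices adjacent to another degree-$t$ vertex is already of order $\frac{4^{t-r}}{\sqrt{t-r+3}}$, giving the main term. The additive $+r$ term (as opposed to the $+r^2$ of Theorem \ref{c est}) is handled by a small auxiliary example: a $K_r$-saturated graph in which a linear-in-$r$ cluster of low-degree vertices is mutually adjacent, obtained for instance from a $K_{r-2}$ joined to a short cycle or path of length $\Theta(r)$ whose vertices all have degree exactly $t$.

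\textbf{Main obstacle.} The technical heart will be verifying the cross-intersection condition without losing the $4^{t-r}$ (rather than $4^t$) exponent. Saturation hands one a $K_{r-2}$ inside $N(v_i)\cap N(w_i)$, but one needs this clique to land inside the restricted set $A_j$ of a different pair, while simultaneously arranging $A_i\cap B_i=\emptyset$ and $|A_i|\le t-r+2$. The role of the guaranteed low-degree neighbor $u_i$, together with the restriction to vertices adjacent to another low-degree vertex, is to pin down the saturation cliques tightly enough to enforce these intersections. Balancing these constraints so the new, sharper Two Families Theorem applies with the right parameters is where the argument will be most delicate.
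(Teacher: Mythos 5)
Your lower bound is essentially the paper's: the graph $G_t$ (together with conical vertices via Lemma~\ref{f ineq}) gives $4^{t-r}/\sqrt{t-r+3}$, since the matching $G_t[R]$ has $\binom{2(t-1)}{t-1}$ vertices all of degree exactly~$t$; and for the $+r$ term a $K_{r-1}$ already does the job (all $r-1$ vertices have degree $r-2\leq t$), so the cycle/path gadget is unnecessary and, as sketched, not obviously $K_r$-saturated.

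The upper bound, however, has a parameter-level flaw that the concluding ``main obstacle'' paragraph already senses but does not resolve. Taking $B_i$ to be a single $K_{r-2}$ of size $r-2$ and $A_i$ of size at most $t-r+2$ feeds the Two Families Theorem the parameters $a\approx t-r$, $b\approx r$; the resulting bound $\binom{a+b}{a}\approx\binom{t}{r-2}$ is only polynomial in $t$ for fixed $r$, nowhere near the exponential $4^{t-r}$. The paper instead sets both $A_i=\Gamma(v_i)$ and $B_i=\Gamma(w_i)$, each of size at most $t$, where $(v_i,w_i)$ is a greedily-chosen \emph{ordered} sequence of adjacent pairs of low-degree vertices with $v_i\not\sim w_j$ for $i<j$; saturation then gives $\omega(A_i\cap B_j)>r-3$ for $i<j$ while $K_r$-freeness gives $\omega(A_i\cap B_i)\leq r-3$. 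This is a threshold condition at level $c\approx r-3$ with $a,b\approx t$, which is exactly the regime where Theorem~\ref{TF} yields $\binom{2(t-r)+O(1)}{t-r+O(1)}\asymp 4^{t-r}/\sqrt{t-r+3}$. Two further ingredients are missing from your plan and are not cosmetic: Lemma~\ref{Graph Theory Lemma}, which is needed to pass from the clique-number conditions $\omega(\cdot)\lessgtr r-3$ to the cardinality conditions of the modified set system; and the weighted double-counting over the sets $I_a$ (together with the separate treatment of the degenerate cases via Lemma~\ref{Degenerate Cases}), without which one lands on $|I|\ll 4^{t-r}/\sqrt{t-r+3}$ but cannot then convert a bound on $|I|$ into a bound on $|\cup_i\Gamma(v_i)|$ with only a $+r$ rather than $+r^2$ error term. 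Finally, note your cross-intersection argument is not correct as stated: $A_i\cap B_j=\emptyset$ with $A_i\subseteq N(v_i)\setminus B_i$ only forces $B_j\cap N(v_i)\subseteq B_i$, which does not place $B_j$ inside $N(v_i)\cap N(u_i)$ and hence does not produce a $K_r$.
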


\newtheorem*{repth3}{Theorem \ref{vert bound}}

As in Theorem \ref{c est}, the lower and upper bounds are the same up to the $\min{(r,\sqrt{t-r+3})}$ factor. We again conjecture that the lower bound is tight. As before, the $r$ terms cannot be absorbed into the other terms, because when $t$ is not much larger than $r-2$, the $r$ terms dominate. Indeed, for such $t$ Theorem \ref{vert bound} gives $f(r,t) \asymp r$.  \\

Using this improved upper bound for $f(r,t)$ we deduce in the same way as Pikhurko a strengthening of his vertex cover result.

\begin{theorem}\label{cover}

Every large $K_r$-saturated graph ($r \geq 3$) with $e$ edges has a vertex cover of size $O(e / \log e)$.

\end{theorem}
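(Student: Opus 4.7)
The plan is to follow Pikhurko's original strategy for deducing a vertex cover bound from an estimate for $f(r,t)$, but fed the much sharper estimate of Theorem \ref{vert bound} in place of his $f(r,t) \leq e^{2(r-2)t \log t + O_r(t)}$. The uniformity in $r$ claimed in the statement should come from the uniform-in-$r$ form of Theorem \ref{vert bound}, together with a trivial bound when $r$ is comparable to $\log e$.

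First I would handle the range of very large $r$. A standard argument shows that a $K_r$-saturated graph on at least $r-1$ vertices has minimum degree at least $r-2$, since adding an edge at a vertex of smaller degree cannot complete a new $K_r$ through that vertex. Hence $n \leq 2e/(r-2)$, and whenever $r \geq \log e + 2$ the whole vertex set is already a vertex cover of size $O(e/\log e)$.

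In the remaining range $r < \log e + 2$ I would pick a threshold degree $t$, set $H = \{v : d(v) > t\}$, and let $L$ be the set of vertices of degree at most $t$ that have some neighbour of degree at most $t$. Any edge whose two endpoints both have degree $\leq t$ has both endpoints in $L$, while any edge with an endpoint of degree $> t$ is met by $H$; hence $H \cup L$ is a vertex cover of $G$. By the handshake lemma $|H| \leq 2e/t$, and Theorem \ref{vert bound} gives
\[
|L| \;\leq\; f(r,t) \;\ll\; \frac{4^{t-r}}{\sqrt{t-r+3}} + r.
\]
Choosing $t = r + \lfloor \tfrac{1}{2}\log_4(e/\log e)\rfloor$ makes $t \geq r-2$, $t = \Theta(\log e)$, and $4^{t-r} \leq \sqrt{e/\log e}$, so $|H| + |L| = O(e/\log e)$ as required.

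The main obstacle, such as it is, has already been overcome by Theorem \ref{vert bound}: the reason the logarithmic-factor improvement over Pikhurko's $O_r(e \log \log e / \log e)$ goes through is precisely that the new $4^t$-type bound for $f(r,t)$ allows $t$ to be chosen of order $\log e$ rather than of order $\log e / \log\log e$ (which is what a $t^{\Theta(t)}$ bound would force), and its good dependence on $r$ makes the case split between ``large $r$'' and ``small $r$'' clean rather than fiddly.
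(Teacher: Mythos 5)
Your proof is essentially the same as the paper's: both of you take Pikhurko's decomposition of the vertex set into $H=\{v : d(v)>t\}$ and the set $L$ of low-degree vertices with a low-degree neighbour, bound $|H|\leq 2e/t$ by handshaking, bound $|L|\leq f(r,t)$ via Theorem \ref{vert bound}, and choose $t \asymp \log e$. The one cosmetic difference is in handling the $r$-dependence: you dispose of $r\geq \log e+2$ separately using $\delta(G)\geq r-2$ so that the trivial cover $V(G)$ already has size $\leq 2e/(r-2)=O(e/\log e)$, whereas the paper notes $e\geq sat(n,K_r)\gg r^2$, hence $r\ll\sqrt e$, and then treats all $r$ at once; both routes are fine.

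There is, however, one inaccuracy that should be flagged: you quote Theorem \ref{vert bound} as giving $f(r,t)\ll \frac{4^{t-r}}{\sqrt{t-r+3}}+r$, but that is the theorem's \emph{lower} bound for $f(r,t)$; the upper bound actually proved is $f(r,t)\ll\frac{4^{t-r}\min(r,\sqrt{t-r+3})}{\sqrt{t-r+3}}+r$, which is larger whenever $t>r-2$. Fortunately your argument never exploits the extra $\tfrac{1}{\sqrt{t-r+3}}$ saving: you only need $f(r,t)\ll e/\log e$, and the established bound already yields $f(r,t)\ll 4^{t-r}+r$ (using $\min(r,\sqrt{t-r+3})\leq\sqrt{t-r+3}$), and with your choice $t=r+\lfloor\tfrac12\log_4(e/\log e)\rfloor$ this is $\leq\sqrt{e/\log e}+r\ll e/\log e$. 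So the conclusion stands, but the inequality as you wrote it is not what Theorem \ref{vert bound} proves and should be replaced by the weaker (and sufficient) $4^{t-r}+r$.
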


\newtheorem*{repth4}{Theorem \ref{cover}}

Note that this bound is uniform in $r$. \\

We trivially have that every graph on $n$ vertices has a vertex cover of size $n$, so it follows that every large $K_r$-saturated graph $(r \geq 3)$ with $n$ vertices and $e$ edges has a vertex cover of size $O(\min{(e / \log e,n)})$. Note that for a $K_r$-saturated graph with $n \geq r \geq 3$ vertices and $e$ edges to exist we must have

$$ rn \ll (r-2)n-\binom{r-1}{2} = sat(n,K_r) \leq e \leq ex(n,K_r) = e\left(T(n,r-1)\right) \ll n^2 \ . $$

Our next theorem states that there exist $K_r$-saturated graphs with $n$ vertices and $e$ edges for which the $O(\min{(e / \log e,n)})$ vertex cover bound is tight for all $r \geq 3$ and all such permitted orders of magnitude for $n$ and $e$.

\begin{theorem}\label{tight}

Let $r \geq 3$ be an integer and $n$ and $e$ be large quantities with $rn \ll e \ll n^2$. Then there exists a $K_r$-saturated graph $G$ with $|G| \asymp n$ and $e(G) \asymp e$ such that every vertex cover in $G$ has size $\Omega(\min{(e / \log e,n)})$.

\end{theorem}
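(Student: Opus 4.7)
The plan is to construct, for each admissible triple $(r,n,e)$, an explicit $K_r$-saturated graph realizing the target vertex-cover bound. First I would reduce to the case $r = 3$ via the join construction: if $H$ is $K_3$-saturated, then $K_{r-3} + H$ is $K_r$-saturated, has $|V(H)| + (r-3)$ vertices, $e(H) + (r-3)|V(H)| + \binom{r-3}{2}$ edges, and vertex-cover number at least $\tau(H)$. Since $rn \ll e$, the $O(rn)$ additional edges and the $O(r)$ additional cover-vertices are absorbed by the asymptotic notation, so it suffices to exhibit the required $K_3$-saturated graphs.

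For $r = 3$ I would split the argument according to which term attains $\min(e/\log e, n)$. In the \emph{dense regime} $e \gg n \log n$ (where $\min = n$), the target is $\tau(G) \asymp n$. I would take $G$ to be a balanced blow-up of a triangle-free graph $H_0$ on $k$ vertices with small independence number $\alpha(H_0) = O(\sqrt{k \log k})$ --- such $H_0$ are provided by Kim's or Bohman's triangle-free process --- and then greedily extend the blow-up to a maximal triangle-free graph (the extension only adds edges, so it does not increase the independence number). For $k$ chosen so that the blow-up has $\asymp e$ edges on $\asymp n$ vertices, one has $\alpha(G) \leq m \cdot \alpha(H_0) = o(n)$, where $m = n/k$ is the blow-up factor, hence $\tau(G) = n - \alpha(G) \asymp n$; the constraint $e \gg n \log n$ is exactly what makes these parameters consistent.

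In the \emph{sparse regime} $n \ll e \ll n \log n$ (where $\min = e/\log e$), the target $\tau(G) \asymp e/\log e$ is strictly less than $n$, and by Theorem \ref{cover} one cannot hope for $\tau \asymp n$, so the construction must be more delicate. Here I would first build a $K_3$-saturated ``core'' $H$ on $\asymp e/\log e$ vertices with $\tau(H) \asymp |V(H)|$, maximum degree $\asymp \log e$, and $e(H) \asymp e$ --- produced by applying the dense-regime construction to the parameters $(n', e') = (e/\log e, e)$ --- and then enlarge it to $\asymp n$ total vertices by blowing up one or more carefully chosen independent vertices of $H$. The added blown-up vertices only enlarge the independent set, so $\tau(G) \geq \tau(H) - O(1) \asymp e/\log e$.

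The main obstacle is the sparse-regime construction: attaching $\asymp n$ additional independent vertices to the core while preserving $K_3$-saturation imposes strong constraints on the host vertices (attaching to a single host forces it to be universal, which collapses the core to a star with $\tau = 1$). The fix is to attach the pendants to an independent pair or small independent set of core vertices whose joint neighbourhood covers all non-neighbours, and then to verify via an explicit combinatorial argument that the resulting graph is $K_3$-saturated with the correct edge count and vertex cover. Designing the core so that a suitable attachment pair exists, and completing this verification, is the technical crux of the proof.
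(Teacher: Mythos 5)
Your high-level plan matches the paper: reduce to $r=3$ by taking a join with a clique (the paper calls this adding conical vertices), then split into two regimes according to which term achieves $\min(e/\log e,n)$. However, the constructions you propose inside each regime diverge from the paper's, and both have gaps.

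\textbf{Dense regime.} The paper does not use the triangle-free process. It uses the graph $G_t'$ built in section~\ref{constr}: $G_t$ is a bipartite-like graph on $S\cup R$ where $R$ consists of all $(t-1)$-subsets of a $2(t-1)$-set $S$, and $G_t[R]$ is a perfect matching of size $\binom{2(t-1)}{t-1}\asymp 4^t t^{-1/2}$. $G_t'$ additionally has a vertex joined to all of $S$, and crucially (property~7) remains $K_3$-saturated after deleting arbitrary matched pairs from $R$. The paper picks $t\asymp e/n$, deletes matched pairs until $N\asymp n$ remain, and gets $|G|\asymp n$, $e(G)\asymp nt\asymp e$, and $\tau(G)\geq N\asymp n$ because the matching survives. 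Your triangle-free-process blow-up cannot reach the full parameter range. If $H_0$ is a triangle-free process graph on $k$ vertices, its edge density is $\asymp\sqrt{\log k/k}$, and a balanced blow-up by $m=n/k$ preserves this density, so $e(G)/n^2\asymp\sqrt{\log k/k}$. Since you need $k\le n$, the smallest density you can hit is $\asymp\sqrt{\log n/n}$, i.e.\ $e\gg n^{3/2}\sqrt{\log n}$. But the dense regime extends down to $e\asymp n\log n$, which you cannot reach; your claim that ``the constraint $e\gg n\log n$ is exactly what makes these parameters consistent'' is not borne out by the arithmetic. (The remark about greedily extending to maximality is also unnecessary: a blow-up of a $K_3$-saturated graph on more than one vertex is already $K_3$-saturated.)

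\textbf{Sparse regime.} You explicitly leave the construction unfinished, flagging it as ``the technical crux.'' The paper resolves it cleanly by exploiting that $\chi(G_t)=3$ (property~6): partition $V(G_t)$ into three maximally independent sets, add one new vertex for each colour class joined to that class, add one final vertex of degree~3 joined to the three new vertices (this gives $G_t''$, still $K_3$-saturated and still with $\tau\gg 4^t t^{-1/2}$), and then blow up the degree-3 vertex by $N+1\asymp n$. Because the blown-up vertex has degree~3, the blow-up contributes only $3N\asymp n\ll e$ extra edges, so $e(G)\asymp e(G_t'')\asymp 4^t t^{1/2}\asymp e$, while $\tau(G)\ge\tau(G_t'')\gg e/\log e$. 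Your instinct to ``attach the pendants to a small independent set whose joint neighbourhood covers all non-neighbours'' is exactly this idea, but you neither identify a core that supports it nor verify the edge count; in particular, your proposal of blowing up ``one or more carefully chosen independent vertices of $H$'' would only work if those vertices have bounded degree, which the triangle-free process does not provide (its degrees are $\asymp\sqrt{k\log k}$, so a single blow-up by a factor $\asymp n$ would already contribute $\gg e$ edges when $e\ll n\log n$).

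In short, you correctly identified the decomposition and the reduction to $r=3$, but the core combinatorial input is different: the paper's $G_t$ family has a tunable degree parameter $t$, a built-in perfect matching on $R$ (for the dense case), and chromatic number~3 (for the sparse case), and these three features do all the work. Replacing $G_t$ with triangle-free-process graphs breaks the edge-count bookkeeping in both regimes.
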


\newtheorem*{repth5}{Theorem \ref{tight}}

The problems of estimating $c(r,t)$ and $f(r,t)$ turn out to be closely related to a celebrated result in extremal set theory known as the Two Families Theorem. This theorem was first proved by Bollob\'as and since then many different versions of the theorem have been proven (see section \ref{context} for more on the Two Families Theorem). To prove Theorems \ref{c est} and \ref{vert bound} we will need the following new version of the theorem.

\begin{theorem}\label{TF}

Let $a,b \geq c \geq 0$ be integers and $(A_i,B_i)_{i \in I}$ be a sequence of pairs of finite sets, indexed by a finite, ordered set $I$, with the following properties. 

\begin{enumerate}
    \item $|A_i \cap \cup_{k < i} \ A_k| \leq a$ and $|B_i| \leq b$ for all $i \in I$.
    \item $|A_i \cap B_i| \leq c  $ for all $i \in I$.
    \item $|A_i \cap B_j| > c $ for all $i < j \in I$.
\end{enumerate}

Then $|I| \ll \binom{a+b-2c+1}{a-c+1}$. 

\end{theorem}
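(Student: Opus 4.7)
The plan is to reduce this skew-threshold ``online'' variant to Frankl's (1982) skew version of Bollob\'as's Two Families Theorem, which states that if $|A_i|\leq a$, $|B_i|\leq b$, $A_i\cap B_i=\emptyset$ and $A_i\cap B_j\neq\emptyset$ for all $i<j$, then $|I|\leq\binom{a+b}{a}$.

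First I would perform a clean-up step: replace each $B_j$ by $B_j\cap\bigcup_{k<j}A_k$. This only shrinks $B_j$, so conditions 1 and 2 are preserved; for condition 3, note that $A_i\subseteq\bigcup_{k<j}A_k$ whenever $i<j$, so the intersection $A_i\cap B_j$ is unchanged. Thus we may assume $B_j\subseteq\bigcup_{k<j}A_k$ for every $j$, and in particular $B_1=\emptyset$. Next, for each $i$ I would construct subsets $A'_i\subseteq A_i$ and $B'_i\subseteq B_i$ with $|A'_i|\leq a-c+1$, $|B'_i|\leq b-c$, $A'_i\cap B'_i=\emptyset$, and $A'_i\cap B'_j\neq\emptyset$ for every $i<j$. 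Applying Frankl's skew theorem to $(A'_i,B'_i)_{i\in I}$ would then yield
$$|I|\leq \binom{(a-c+1)+(b-c)}{a-c+1}=\binom{a+b-2c+1}{a-c+1},$$
as required. The extra ``$+1$'' in the binomial should correspond to the slack provided by condition 1, which bounds $|A_i\cap\bigcup_{k<i}A_k|$ rather than $|A_i|$ itself and so effectively furnishes one extra ``new'' element of $A_i$ per pair.

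The hard part will be the selection step. Because $|A_i|$ is not bounded and the witnesses of $|A_i\cap B_j|>c$ may lie entirely in $A_i\setminus\bigcup_{k<i}A_k$, one cannot just trim $A_i$ naively. A natural attempt is a greedy construction that processes indices in some order and uses the threshold surplus in each $A_i\cap B_j$ to reserve $c$ elements for disjointness with $B_i$, assigning a single ``representative'' element to handle the intersection with each $B_j$, $j>i$. Verifying that a representative set of size $a-c+1$ can simultaneously cover every future $B_j$ will most likely require a Hall-type matching or an inductive argument exploiting the containment $B_j\subseteq\bigcup_{k<j}A_k$. Should this direct reduction prove intractable, the backup plan is to adapt Frankl's random-linear-order proof directly: for each $i$ one would define an event $E_i$ on a uniform permutation of $\bigcup_k A_k$ whose probability is exactly $\binom{a+b-2c+1}{a-c+1}^{-1}$ and whose pairwise disjointness across $i$ is forced by conditions 2 and 3.
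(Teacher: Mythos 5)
Your proposal sketches a plan but does not carry out its central step, which is where all the difficulty lies. The clean-up step (replacing $B_j$ by $B_j \cap \bigcup_{k<j} A_k$) is correct and harmless, but the selection step --- producing $A'_i \subseteq A_i$ with $|A'_i| \le a-c+1$ and $B'_j \subseteq B_j$ with $|B'_j| \le b-c$ so that the skew intersecting conditions survive --- is stated as a goal, not constructed. You acknowledge this yourself (``the hard part''), propose a Hall-type matching or a probabilistic argument as fallbacks, and verify neither. The matching idea is not obviously sound: after the clean-up the witnesses of $|A_i \cap B_j| > c$ can sit entirely in $A_i \setminus \bigcup_{k<i} A_k$, a set of unbounded size (only the intersection with $\bigcup_{k<i}A_k$ is bounded by $a$), and distinct $j>i$ can demand elements of that set in incompatible ways, so it is not clear that a single extra representative per pair suffices. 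The probabilistic fallback is even more fragile: the random-linear-order proof (Katona's) handles the \emph{symmetric} Bollob\'as theorem, and the paper explicitly notes that no combinatorial proof of the skew version (Frankl's) is known --- all known proofs use linear algebra --- so there is no ``Frankl's random-linear-order proof'' to adapt.

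The paper's proof does not attempt a direct set-level reduction to Frankl's theorem at all. It passes to the vector-space setting via the vector space construction (Lemma~\ref{f1<f2}), uses F\"uredi's general-position argument (Lemma~\ref{tred}) to reduce the threshold parameter to $c=0$ (a step with no set-theoretic analogue, which is precisely why the vector-space detour is needed), and then runs an induction on $a+b$. The induction hinges on an auxiliary quantity $u_v(a,b)$, the maximum of $\dim\bigl(\sum_{i} B_i\bigr)$ over $(a,b)$ vector space systems: Lemma~\ref{f2<f4} bounds $i_v(a,b,0)$ in terms of $u_v(a,b)$ by splitting $I$ into a ``well-behaved'' subset $J$ (on which $A_i$ can be replaced by $A_i \cap \sum_{k<i}A_k$ and Theorem~\ref{Vector} applies) and a remainder $I\setminus J$ controlled by dimension growth inside $\sum_{k\in J}B_k$, while Lemma~\ref{f4<f2} closes the loop by bounding $u_v(a,b)$ using $i_v$ at smaller parameters. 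This auxiliary quantity and the $J$/$I\setminus J$ split are the ideas your outline is missing. If your direct reduction could be completed it would in fact yield a cleaner and slightly sharper statement (exact constant $1$ rather than the paper's implicit constant $2$), but as written there is a genuine gap and no evidence it can be filled.
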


As we shall see later, Theorem \ref{TF} is tight (see Theorem \ref{better estimates}). The novelty in Theorem \ref{TF} is that it only requires the bound $|A_i \cap \cup_{k < i} \ A_k| \leq a$ rather than $|A_i| \leq a$.  \\

The rest of the paper is organised as follows. We first establish some notation in section \ref{notation}. We then construct graphs in section \ref{constr} which we will use to prove the lower bounds in Theorems \ref{c est} and \ref{vert bound} and construct the graphs in Theorem \ref{tight} in the special case $r=3$. In section \ref{bu} we define blow-ups and discuss their relation with $sat(n,K_r,t)$, which leads to an alternative definition of $c(r,t)$ that we will use throughout the rest of the paper. \\

Next, we define conical vertices in section \ref{sec con} and use them to obtain inequalities for $c(r,t)$ and $f(r,t)$ which we will use to deduce the lower bounds in Theorems \ref{c est} and \ref{vert bound} for the general case $r \geq 3$ from the special case $r=3$. We will also use conical vertices when proving Theorem \ref{tight} to similarly reduce the general case $r \geq 3$ to the special case $r=3$. We conclude section \ref{sec con} with a conjecture regarding conical vertices. We then prove Theorems \ref{c est} through \ref{tight} in section \ref{mainresults}. \\

In section \ref{TFT} we first provide some background on the Two Families Theorem to put Theorem \ref{TF} into context. We then prove Theorem \ref{TF} and further related results. Next, we discuss the known values of $c(r,t)$ in section \ref{known} and complete the work of Duffus and Hanson in \cite{Reference 3} on the case $r=3=t$ by determining the extremal graphs. Finally, in section \ref{concl}, we first speculate about the exact value of $c(3,t)$ for large $t$. We then discuss how one might remove the $\min{(r,\sqrt{t-r+3})}$ factor from the upper bounds in Theorems \ref{c est} and \ref{vert bound}.

\section{Notation}\label{notation}

In this section we establish some graph theory and asymptotic notation and state some order theory definitions.

\subsection{Graph theory notation}\label{graph notation}

Given a graph $G$, we denote by $|G|$, $e(G)$, $\delta(G)$, $\chi(G)$ and $\omega(G)$ the number of vertices, number of edges, minimum degree, chromatic number and clique number of $G$, respectively. We write $V(G)$ for the vertex set of $G$. Given a subset $S \subseteq V(G)$, we denote by $G[S]$ the subgraph of $G$ induced by $S$. We let $\omega(S)=w(G[S])$ and $e(S)=e(G[S])$. Given a subset $T \subseteq V(G) \setminus S$, we write $e(S,T)$ for the number of edges between $S$ and $T$. Given a vertex $v \in V(G)$, we denote by $\Gamma(v)$ the neighbourhood of $v$ in $G$. Given an integer $s \geq 0$, we write $G^s$ for the graph obtained by adding $s$ conical vertices to $G$ (see section \ref{sec con} for the definition of a conical vertex).

\subsection{Asymptotic notation}\label{asymptotic notation}

Given two real valued functions $f$ and $g$ of several variables, we write $f \ll g$ if there exists a constant $C>0$ such that $f \leq Cg$ for all possible values of the variables. We write $f \gg g$ if $g \ll f$, and $f \asymp g$ if $f \ll g$ and $f \gg g$. $O(g)$ denotes a function $f$ such that $f \ll g$, and $\Omega(g)$ denotes a function $f$ such that $f \gg g$. Given a variable, say $r$, if instead of being a constant, $C$ is a function of $r$, we write $f \ll_r g$. In other words, $f \ll_r g$ if $f \ll g$ for any fixed $r$. Similarly, given other variables, say $t$ and $k$, we can define $f \ll_{r,t} g$, $f \asymp_r g$, $O_{r,t,k}(g)$, etc.

\subsection{Order theory definitions}\label{induced orders}

Given an order $<$ on a set $S$, the dual order $<'$ on $S$ is given by $a<'b$ if and only if $a>b$. Given an order $<$ on a set $S$ and a subset $T \subseteq S$, the induced order $<'$ on $T$ is given by $a<'b$ if and only if $a<b$. Given orders $<_i$ on disjoint sets $S_i$, indexed by a set $I$ with order $<$, the sum of the orders is the order $<'$ on $\cup_{i \in I} \ S_i$ given by $a<'b$ if and only if $a \in S_i$ and $b \in S_j$ for some $i<j \in I$ or $a,b \in S_i$ and $a<_ib$ for some $i \in I$.

\section{Construction}\label{constr}

In this section we define a graph $G_t$ for each integer $t \geq 3$. We will use these graphs to prove the lower bounds in Theorems \ref{c est} and \ref{vert bound} and construct the graphs in Theorem \ref{tight} in the special case $r=3$. Let $S$ be a set of size $2(t-1)$ and $R=\{ T \subseteq S : \ |T|=t-1 \}$. The vertex set of $G_t$ is $S \cup R$. The edges are as follows.

\begin{itemize}

\item $S$ is an independent set.
\item An element $s \in S$ is adjacent to a set $T \in R$ if and only if $s \in T$. 
\item Two sets $T,T' \in R$ are adjacent if and only if $T \cap T'= \emptyset$.

\end{itemize}

\begin{centering}
     
\begin{tikzpicture}[scale=0.5, every node/.style={scale=1.3}]
      
             \draw (0,0) ellipse (1 and 2);
             \draw (7,0) ellipse (2 and 4);
             \draw (0,3) node {$S$};
             \draw (7,5) node {$R$};
             \filldraw[black] (7,2) circle (3pt) node[anchor=south]{$T$};
             \filldraw[black] (7,-2) circle (3pt) node[anchor=north]{$S \setminus T$};
             \filldraw[black] (0,0) circle (3pt) node[anchor=east]{$s$};
             \draw (0,0) -- (7,2)  node[midway, sloped, above] {$s \in T$};
             \draw (7,-2) -- (7,2);
             \draw[dashed] (0,0) -- (7,-2);
            
\end{tikzpicture}


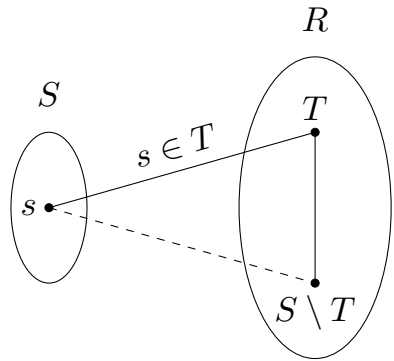
\captionof{figure}{The graph $G_t$.}

\end{centering}
   
\leavevmode\newline It is easy to check that $G_t$ has the following properties, which we will use later.

\begin{enumerate}

\item $G_t$ is $K_3$-saturated.
\item $|G_t|=\binom{2(t-1)}{t-1} + 2(t-1)$.
\item $e(G_t) = (t-1/2)\binom{2(t-1)}{t-1}$.
\item Every vertex in $S$ has degree at least $t$ and every vertex in $R$ has degree exactly $t$.  
\item $G_t[R]$ is a matching with $\binom{2(t-1)}{t-1}$ vertices.
\item $\chi(G_t)=3$.
\item Let $G_t'$ be the graph obtained from $G_t$ by adding a new vertex and joining it to all vertices in $S$. Then $G_t'$ has the property of not only being $K_3$-saturated, but also of remaining so after arbitrarily removing pairs of matching vertices from $R$.

\end{enumerate}

\section{Blow-ups}\label{bu}

In this section we first define blow-ups and observe that the operation of blowing up preserves both the properties of being $K_r$-saturated (with some trivial exceptions) and of having large minimum degree. We then explain how Day proved that for fixed $r$ and $t$, $sat(n,K_r,t)=tn-c(r,t)$ for large enough $n$, which leads to an alternative definition of $c(r,t)$. We define a similar quantity $c'(r,t)$ and pose a question about how these two quantitites are related. Finally, we define collections of graphs $C(r,t,k)$ and $C'(r,t,k)$, show that $C'(r,t,k)$ is always finite and observe that Theorem \ref{blow-up} is equivalent to the statement that $C(r,t,k)$ is always finite. \\

A blow-up of a graph $H$ is a graph $G$ obtained by replacing each vertex of $H$ with some positive number of copies. Two copies in $G$ are adjacent if and only if the vertices in $H$ they are copies of are adjacent (in particular copies of the same vertex are non-adjacent). It is easy to check that $G$ is $K_r$-saturated if and only if $H$ is $K_r$-saturated, unless $H$ is a clique on at most $r-2$ vertices and $G$ is a proper blow-up of $H$. Note that $ \delta(G) \geq \delta(H)$. \\

Hence, given integers $r \geq 3$ and $t \geq r-2$ and a $K_r$-saturated graph $H$ with $\delta(H) \geq t$, we can generate other $K_r$-saturated graphs $G$ with $\delta(G) \geq t$ by taking blow-ups of $H$ (the conditions $\delta(H) \geq t \geq r-2$ rule out the trivial exceptions). To minimise the number of edges in $G$ given the number of vertices, one should blow up non-adjacent vertices in $H$ of minimum degree. We then have $e(G)-\delta(H)|G|=e(H)-\delta(H)|H|$. In particular, if $\delta(H)=t$, $e(G)-t|G|=e(H)-t|H|$. \\

Day proved that for fixed $r$ and $t$, $sat(n,K_r,t)=tn-c(r,t)$ for large enough $n$ as follows. He first proved (and this is where the difficulty lies) that for fixed $r$ and $t$ the quantity $t|G|-e(G)$ is bounded above when $G$ ranges over all $K_r$-saturated graphs with $\delta(G) \geq t$ (Theorem 1.1 in \cite{Reference 4}). We can thus define $$ c(r,t)= \max_{\substack{G \ K_r\text{-sat.} \\ \delta(G) = t}} t|G|-e(G) \  \ \ \  \text{and} \ \ \ \ c'(r,t)= \max_{\substack{G \ K_r\text{-sat.} \\ \delta(G) \geq t}} t|G|-e(G) \ . $$

(It is easy to check that for all integers $r \geq 3$ and $t \geq r-2$, $K_r$-saturated graphs $G$ with $\delta(G)=t$ do exist.)  \\ 

Take a $K_r$-saturated graph $H$ with $\delta(H) = t$ and $t|H|-e(H)=c(r,t)$. Then by blowing up non-adjacent vertices in $H$ of degree $t$, we can construct $K_r$-saturated graphs $G$ with $\delta(G) \geq t$ (in fact with equality), $|G|=n$ and $e(G) = tn-c(r,t)$ for all $n \geq |H|$, so $sat(n,K_r,t) \leq tn-c(r,t)$ for large enough $n$. Now suppose $G$ is a $K_r$-saturated graph on $n$ vertices with $\delta(G) \geq t$. If $\delta(G)=t$, $e(G) \geq tn-c(r,t)$, whereas if $\delta(G) > t$, $e(G) \geq (t+1)n-c'(r,t+1)$. But $(t+1)n-c'(r,t+1) > tn-c(r,t)$ for large enough $n$, so $e(G) \geq tn-c(r,t)$ if $n$ is large enough.  Hence $sat(n,K_r,t) \geq tn-c(r,t)$ for large enough $n$, so $sat(n,K_r,t)=tn-c(r,t)$ for large enough $n$. \\

Clearly $c'(r,t) \geq c(r,t)$. It is unclear whether we have equality.

\begin{q}\label{q}

Do we have $c'(r,t)=c(r,t)$?

\end{q}

We now define collections of graphs $C(r,t,k)$ and $C'(r,t,k)$ for all integers $r \geq 3$, $t \geq r-2$ and $k$. Let $C(r,t,k)$ be the collection of $K_r$-saturated graphs $G$ with $\delta(G) = t$ and $e(G)=t|G|+k$ which are minimal in the sense that they cannot be obtained from a smaller $K_r$-saturated graph $H$ with $\delta(H) = t$ and $e(H)=t|H|+k$ by blowing up non-adjacent vertices in $H$ of degree $t$. Let $C'(r,t,k)$ be the collection of $K_r$-saturated graphs $G$ with $\delta(G) > t$ and $e(G)=t|G|+k$. \\

Note that for $G \in C'(r,t,k)$, $c'(r,t+1) \geq (t+1)|G|-e(G) = |G|-k$, which implies $|G| \leq c'(r,t+1) + k$. It follows that $C'(r,t,k)$ is always finite. Hence  Theorem \ref{blow-up} is equivalent to the statement that $C(r,t,k)$ is always finite. We will prove this in section \ref{mainresults}. For all integers $r \geq 3$ and $t \geq r-2$, let $C(r,t)=C(r,t,-c(r,t))$ and $C'(r,t)=C'(r,t,-c'(r,t))$. Note that for fixed $r$ and $t$ and large enough $n$, the extremal graphs for $sat(n,K_r,t)$ are precisely the graphs obtained from a graph in $C(r,t)$ by blowing up non-adjacent vertices of degree $t$.

\section{Conical vertices}\label{sec con}

In this section we first define conical vertices and note that the operation of adding a conical vertex interacts well with the property of being $K_r$-saturated. This fact allows us to obtain some inequalities for $c(r,t)$ and $f(r,t)$ which we will use later to deduce lower bounds for the general case $r \geq 3$ from the special case $r=3$. We will also use this fact when proving Theorem \ref{tight} to similarly reduce the general case $r \geq 3$ to the special case $r=3$. Finally, we state a speculation made by Day regarding conical vertices as a conjecture and discuss the evidence for this conjecture. \\

A conical vertex is a vertex adjacent to all other vertices. It is easy to check that for every graph $G$ and integer $s \geq 0$, the graph $G^s$ obtained by adding $s$ conical vertices to $G$ is $K_{r+s}$-saturated if and only if $G$ is $K_r$-saturated. We thus obtain the following two lemmas, which we will use later.

\begin{lemma}\label{c ineq}
For all integers $r \geq 3$, $t \geq r-2$ and $s \geq 0$,

$$ c(r+s,t+s) \geq c(r,t)+ts+\binom{s+1}{2} \ . $$
\end{lemma}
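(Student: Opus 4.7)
The plan is to exhibit, for any witness of $c(r,t)$, an explicit graph that witnesses the desired lower bound for $c(r+s,t+s)$. Concretely, I would let $H$ be a $K_r$-saturated graph with $\delta(H)=t$ and $t|H|-e(H)=c(r,t)$ (which exists by the definition of $c(r,t)$ as a maximum), and then consider $H^s$, the graph obtained by adding $s$ conical vertices to $H$. The section has already noted that $H^s$ is $K_{r+s}$-saturated iff $H$ is $K_r$-saturated, so $H^s$ is a legal candidate for the maximum defining $c(r+s,t+s)$, provided its minimum degree is exactly $t+s$.

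Next I would check the minimum-degree condition. Each original vertex of $H$ gains $s$ new neighbours in $H^s$, so its degree becomes $\deg_H(v)+s\geq t+s$, with equality for any degree-$t$ vertex of $H$. Each added conical vertex has degree $|H|+s-1$. Since $H$ contains a vertex of degree $t$, which has $t$ distinct neighbours, $|H|\geq t+1$, so the conical vertices have degree at least $t+s$. Therefore $\delta(H^s)=t+s$, and $H^s$ is an admissible competitor for $c(r+s,t+s)$.

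Finally, the computation is immediate: $|H^s|=|H|+s$ and $e(H^s)=e(H)+s|H|+\binom{s}{2}$ (the $s$ conical vertices contribute $s|H|$ edges to $V(H)$ and $\binom{s}{2}$ edges among themselves). Expanding,
$$(t+s)|H^s|-e(H^s)=\bigl(t|H|-e(H)\bigr)+ts+s^2-\binom{s}{2}=c(r,t)+ts+\binom{s+1}{2},$$
which yields the desired inequality since $c(r+s,t+s)$ dominates the left-hand side.

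There is no real obstacle here: the work is pushed entirely into the (previously established) fact that adding conical vertices preserves $K_r$-saturation and raises the clique parameter by one. The only small point requiring care is the verification $|H|\geq t+1$ needed to guarantee $\delta(H^s)=t+s$ exactly (rather than strictly greater), since the definition of $c(r+s,t+s)$ restricts to graphs of minimum degree exactly $t+s$; this is handled by the trivial observation above.
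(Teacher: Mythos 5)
Your proof is correct and follows the exact route the paper has in mind: the lemma is stated immediately after the observation that adding $s$ conical vertices converts a $K_r$-saturated graph into a $K_{r+s}$-saturated one, and the paper leaves the bookkeeping to the reader. You supply that bookkeeping — taking an extremal witness $H$ for $c(r,t)$, forming $H^s$, verifying $\delta(H^s)=t+s$ exactly via the easy bound $|H|\ge t+1$, and computing $(t+s)|H^s|-e(H^s)=c(r,t)+ts+\binom{s+1}{2}$ — and all steps, including the binomial identity $s^2-\binom{s}{2}=\binom{s+1}{2}$, check out.
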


\begin{lemma}\label{f ineq}
For all integers $r \geq 3$, $t \geq r-2$ and $s \geq 0$,

$$ f(r+s,t+s) \geq f(r,t) \ . $$
\end{lemma}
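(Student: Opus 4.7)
The plan is to lift an extremal graph for $f(r,t)$ to an extremal graph for $f(r+s,t+s)$ by adding $s$ conical vertices, using the fact stated at the start of the section.

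First, I would pick a $K_r$-saturated graph $G$ which achieves $f(r,t)$, i.e.\ has exactly $f(r,t)$ vertices of degree at most $t$ that are adjacent to some other vertex of degree at most $t$. I would then form the graph $G^s$ by adding $s$ conical vertices. By the observation recorded at the beginning of section \ref{sec con}, $G^s$ is $K_{r+s}$-saturated, so it is an admissible witness for $f(r+s,t+s)$.

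Next, I would track the degrees under this operation. The degree of every original vertex $v \in V(G)$ in $G^s$ equals its $G$-degree plus $s$, because $v$ gains precisely the $s$ new conical neighbours and no adjacency among the original vertices is altered. Hence any vertex of degree at most $t$ in $G$ becomes a vertex of degree at most $t+s$ in $G^s$, and adjacency between any pair of original vertices is preserved. Consequently the $f(r,t)$ witnessing vertices in $G$ remain a collection of vertices of degree at most $t+s$ in $G^s$, each adjacent to another such vertex, yielding the desired inequality
$$ f(r+s,t+s) \geq f(r,t). $$

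There is essentially no obstacle here: the only thing to be mildly careful about is that we do not need to worry about the new conical vertices themselves, since they may or may not have degree at most $t+s$ but in either case can only increase the relevant count, never decrease it. So the argument is a direct transfer, analogous to (but simpler than) the proof of Lemma \ref{c ineq}, where one also has to keep track of the edge count.
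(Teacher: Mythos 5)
Your proof is correct and is precisely the argument the paper intends (the lemma is stated without proof immediately after the observation that adding $s$ conical vertices sends a $K_r$-saturated graph to a $K_{r+s}$-saturated one). Your degree bookkeeping — every original vertex gains exactly $s$ to its degree, adjacencies among original vertices are unchanged, and the new conical vertices can only add to the count — is exactly what is needed.
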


Day speculated that perhaps for all fixed $r \geq 4$ and $t \geq r-2$ and large enough $n$, all extremal graphs for $sat(n,K_r,t)$ have a conical vertex (last paragraph in \cite{Reference 4}). This is equivalent to the statement that every graph in $C(r,t)$ has a conical vertex for $r \geq 4$. The author believes this to be true.

\begin{conj}\label{con vert}

For all integers $r \geq 4$ and $t \geq r-2$, every graph in $C(r,t)$ has a conical vertex.

\end{conj}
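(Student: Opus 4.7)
The plan is induction on $r$, using Lemma \ref{c ineq} as the main lever. The first step is to reinterpret the conjecture through that inequality. If $v$ is a conical vertex of $G \in C(r,t)$, then $G - v$ is $K_{r-1}$-saturated, $\delta(G-v) = t - 1$, and
$$(t-1)|G-v| - e(G-v) = t|G| - e(G) - t = c(r,t) - t.$$
By Lemma \ref{c ineq} this forces $c(r-1, t-1) = c(r,t) - t$ and $G - v \in C(r-1, t-1)$. Hence the conjecture is equivalent to the statement that equality holds in Lemma \ref{c ineq} at $s = 1$ for $r \geq 4$, together with the structural claim that every $G \in C(r,t)$ is obtained from some $H \in C(r-1, t-1)$ by adding a single conical vertex. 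Iterating then gives $c(r,t) = c(3, t-r+3) + (r-3)t - \binom{r-3}{2}$, which is consistent with Theorem \ref{c est} and would pin the upper bound to the conjectured shape.

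For the base case $r = 4$, I would assume $G \in C(4,t)$ has no conical vertex and attempt to derive a contradiction by an exchange argument. The $K_4$-saturation condition is that for every non-edge $uv$, the common neighbourhood $\Gamma(u) \cap \Gamma(v)$ contains an edge. Picking a vertex $v$ of degree $t$ and examining the non-neighbours of $v$ yields a rich system of edges inside $\Gamma(v)$. The goal would be to locate two non-adjacent vertices of degree $t$ that can be identified without changing the excess $t|G|-e(G)$, producing a strictly smaller $K_4$-saturated graph in $C(4,t)$ and contradicting minimality. The analysis will have to split on the structure of $G[\Gamma(v)]$, and the $\chi = 3$ behaviour of the $r = 3$ constructions in Section \ref{constr} suggests that the non-conical examples arising in $C(3, t-1)$ obstruct the naive versions of this argument.

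For the inductive step with $r \geq 5$, I would look for a vertex $u$ whose neighbourhood $\Gamma(u)$ induces a $K_{r-1}$-saturated graph with $\delta \geq t-1$; this is automatic when $u$ is conical but might also occur at other high-degree vertices. Applying the inductive hypothesis inside $\Gamma(u)$ supplies a conical vertex there, which one would then try to propagate to an adjacency with every other vertex of $G$. A complementary angle is to combine this with Theorem \ref{blow-up}: since $C(r,t)$ is finite, it may be feasible to analyse its members directly for small values of $t - r + 3$ and look for a uniform structural pattern forcing a conical vertex.

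The main obstacle is that $C(r,t)$ is only known to be finite; no explicit description of its members is available. The Two Families bound behind Theorem \ref{c est} gives an upper estimate for $c(r,t)$ but does not describe the extremal configurations, so any serious attack will likely require a \emph{stability} version of Theorem \ref{TF}, characterising the sequences $(A_i, B_i)_{i \in I}$ that come within an additive constant of $\binom{a+b-2c+1}{a-c+1}$. Translating such a stability result back through the set-system-to-graph dictionary used in the paper, and showing that the only near-extremal graph structures arise as conical extensions of graphs in $C(3, t-r+3)$, is where the real difficulty lies and is the step I would expect to consume most of the work.
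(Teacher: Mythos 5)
The statement you have been asked to prove is Conjecture~\ref{con vert}, which the paper leaves \emph{open}: it is stated as a conjecture, not a theorem, and the paper gives no proof. The only support offered is that Hajnal's theorem settles the cases $t < 2(r-2)$, that Alon, Erd\H{o}s, Holzman and Krivelevich settled the case $r=t=4$, and that the near-tightness of the bounds in Theorem~\ref{c est} is consistent with the conjectured equality. There is therefore no proof in the paper against which to compare your attempt, and the correct response here is to flag the statement as an open problem rather than to supply a proof.

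Your submission is not a proof either, as you yourself acknowledge in the final paragraph. The part that is correct --- computing that a conical vertex $v$ in $G\in C(r,t)$ gives $(t-1)|G-v|-e(G-v)=c(r,t)-t$, combining with Lemma~\ref{c ineq} to force $c(r-1,t-1)=c(r,t)-t$ and $G-v\in C(r-1,t-1)$, and deducing $c(r,t)=c(3,t-r+3)+(r-3)t-\binom{r-3}{2}$ --- reproduces the paper's own remarks in Section~\ref{sec con} almost verbatim; it is a useful sanity check but adds nothing new. The rest is a research plan with large unfilled holes: the base case $r=4$ rests on an exchange argument for which you do not verify that identifying two degree-$t$ vertices preserves $K_4$-saturation, keeps $\delta=t$, or preserves the excess $t|G|-e(G)$ (and this case was already done by Alon, Erd\H{o}s, Holzman and Krivelevich, so you should cite their result rather than re-derive it); the inductive step presupposes a vertex $u$ with $G[\Gamma(u)]$ being $K_{r-1}$-saturated of minimum degree at least $t-1$, which you neither prove exists nor explain how to propagate into a conical vertex of all of $G$; and the ``stability version of Theorem~\ref{TF}'' you gesture at is not stated, let alone proved. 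These are reasonable directions, and the assessment of where the difficulty sits is honest, but as written the conjecture remains unproved both in the paper and in your attempt.
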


There is some evidence for Conjecture \ref{con vert}. Hajnal showed that every $K_r$-saturated graph $G$ with $\delta(G) < 2(r-2)$ has a conical vertex (Theorem 1 in \cite{Reference 6}). Hence Conjecture \ref{con vert} is true for $t<2(r-2)$. Moreover, Alon, Erd\"{o}s, Holzman and Krivelevich showed that Conjecture \ref{con vert} is true when $r=4=t$, the first case not covered by Hajnal's result (Theorem 8 and Corollary 3 in \cite{Reference 7}). \\

An equivalent way of stating Conjecture \ref{con vert} is that for all integers $r \geq 4$ and $t \geq r-2$, $C(r,t)$ is the collection of graphs that can be obtained by adding a conical vertex to a graph in $C(r-1,t-1)$. This would imply that we have equality in Lemma \ref{c ineq}, or equivalently that

$$ c(r,t)= c(3,t-(r-3))+(r-3)t-\binom{r-3}{2}  $$

for all integers $r \geq 3$ and $t \geq r-2$. \\

By Theorem \ref{c est}, $c(3,t) \asymp 4^t t^{-1/2}$, so Conjecture \ref{con vert} would imply 

$$ c(r,t) \asymp \frac{4^{t-r}}{\sqrt{t-r+3}} + r^2 \ , $$

or in other words, that the lower bound in Theorem \ref{c est} is tight. (Indeed, we will prove the lower bound in Theorem \ref{c est} by first using the graph $G_t$ from Section \ref{constr} to show $c(3,t) \gg 4^t t^{-1/2}$ and then using Lemma \ref{c ineq}.) So the fact that the upper bound in Theorem \ref{c est} comes close to showing the lower bound is tight is further evidence for Conjecture \ref{con vert}.

\section{Main results}\label{mainresults}

In this section we prove Theorems \ref{c est} through \ref{tight}. We first state some results that we will need in the proofs. To state the first result, we need the following definition.

\begin{definition}\label{set}

For integers $a,b \geq 0$, an $(a,b)$ \emph{set system} is a sequence $(A_i,B_i)_{i \in I}$ of pairs of finite sets, indexed by a finite, ordered set $I$, with the following properties.

\begin{enumerate}
    \item  $|A_i| \leq a$ and $|B_i| \leq b$ for all $i \in I$.
    \item $A_i \cap B_i = \emptyset  $ for all $i \in I$.
    \item $A_i \cap B_j \neq \emptyset $ for all $i < j \in I$.
\end{enumerate}

\end{definition}

\theoremstyle{definition}
\newtheorem*{repdef1}{Definition \ref{set}}
\theoremstyle{plain}

The first result is the following known version of the Two Families Theorem (see \cite{Reference 8}).

\begin{theorem}[Skew Two Families Theorem, Frankl, 1982]\label{Skew}

Let $a,b \geq 0$ be integers and $(A_i,B_i)_{i \in I}$ be an $(a,b)$ set system. Then $|I| \leq \binom{a+b}{a}$.

\end{theorem}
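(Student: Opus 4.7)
The plan is to prove the Skew Two Families Theorem via the exterior algebra method, which is particularly clean because the skew hypothesis translates naturally into a triangularity condition on a certain matrix.

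First, I would reduce to the case where $|A_i| = a$ and $|B_i| = b$ exactly. For each $i$, pad $A_i$ with $a - |A_i|$ fresh dummy elements that are unique to the pair $(A_i, B_i)$, and similarly pad $B_i$ with $b - |B_i|$ unique dummies. Because the added elements appear in no other set, conditions (2) and (3) are preserved: the intersection $A_i \cap B_i$ remains empty, and each non-empty intersection $A_i \cap B_j$ for $i < j$ still contains its original witness element.

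Next, working over $\mathbb{R}$, pick a vector space $V$ of dimension $a+b$ and assign to each element $x$ of the (enlarged) ground set a vector $v_x \in V$ in general position, meaning that any $a+b$ of the $v_x$'s are linearly independent (for instance, place them on the moment curve $v_x = (1, t_x, t_x^2, \ldots, t_x^{a+b-1})$ with distinct real parameters $t_x$). For each $i \in I$, form the wedges
$$u_i = \bigwedge_{x \in A_i} v_x \in \wedge^{a} V \quad \text{and} \quad w_i = \bigwedge_{x \in B_i} v_x \in \wedge^{b} V,$$
and fix an isomorphism $\wedge^{a+b} V \cong \mathbb{R}$. Observe that the product $u_i \wedge w_j \in \wedge^{a+b} V$ vanishes whenever $A_i \cap B_j \neq \emptyset$ (repeated wedge factor) and is non-zero otherwise (general position of the $a+b$ distinct vectors).

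Now consider the $|I| \times |I|$ matrix $M$ with $M_{ij} = u_i \wedge w_j$, rows and columns ordered by the order on $I$. By condition (2) the diagonal entries are non-zero, while by condition (3) every entry with $i < j$ vanishes, so $M$ is lower triangular with non-zero diagonal and therefore invertible. This forces the $u_i$ to be linearly independent in $\wedge^{a} V$, so $|I| \leq \dim \wedge^{a} V = \binom{a+b}{a}$. The main obstacles are cosmetic—checking that the padding preserves all three conditions and arranging the vectors in general position—while the essential content is the triangularity observation, which is precisely what the asymmetric intersection condition provides.
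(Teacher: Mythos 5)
Your proof is correct. The padding step is sound: adding singly-used dummy elements to bring each $A_i$ up to size exactly $a$ and each $B_i$ up to size exactly $b$ preserves $A_i \cap B_i = \emptyset$ and $A_i \cap B_j \neq \emptyset$ for $i<j$, and is needed so that $u_i \in \wedge^a V$ and $w_j \in \wedge^b V$ with $u_i \wedge w_j \in \wedge^{a+b}V \cong \mathbb{R}$. General position on the moment curve makes $u_i \wedge w_j$ vanish exactly when $A_i$ and $B_j$ share an element, so the matrix $(u_i \wedge w_j)$ is lower triangular with non-zero diagonal, hence invertible, and the $u_i$ are independent in a space of dimension $\binom{a+b}{a}$.

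Note that the paper does not actually prove Theorem \ref{Skew}: it cites Frankl and only sketches the reduction to the Vector Space Two Families Theorem (Theorem \ref{Vector}, Lov\'asz), which is itself cited and stated to be ``proved using exterior algebra'' without the argument being given. Your proof is a self-contained version of that same exterior-algebra method: you effectively combine the paper's vector space construction (sets $\mapsto$ coordinate subspaces, which your moment-curve choice replaces by generic one-dimensional spans, built into general position from the start) with the Lov\'asz wedge-product / triangular-matrix argument that the paper leaves implicit. So the underlying idea is the one the paper gestures at, but you supply the missing exterior-algebra core rather than outsourcing it; the small price is that you must verify the padding-to-exact-size step, which the paper's abstract vector-space formulation sidesteps.
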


\newtheorem*{repth6}{Theorem \ref{Skew}}

To state the next result, we need the following definition.

\begin{definition}\label{mod set}

For integers $a,b,c \geq 0$, an $(a,b,c)$ \emph{modified set system} is a sequence $(A_i,B_i)_{i \in I}$ of pairs of finite sets, indexed by a finite, ordered set $I$, with the following properties.

\begin{enumerate}
    \item  $|A_i \cap \cup_{k<i} \ A_k| \leq a$ and $|B_i| \leq b$ for all $i \in I$.
    \item $|A_i \cap B_i| \leq c $ for all $i \in I$.
    \item $|A_i \cap B_j| > c $ for all $i < j \in I$.
\end{enumerate}

\end{definition}

\theoremstyle{definition}
\newtheorem*{repdef2}{Definition \ref{mod set}}
\theoremstyle{plain}

The result consists of Theorem \ref{TF} and two further results covering the remaining, degenerate cases $b > c \geq a$ and $b \leq c$.

\begin{theorem}\label{estimates}

Let $a,b,c \geq 0$ be integers and $(A_i,B_i)_{i \in I}$ be an $(a,b,c)$ modified set system. Then in each of the following cases we have the following bounds for $|I|$.

\begin{enumerate}
    \item  If $a,b > c$,
    
     $$ |I| \ll \binom{a+b-2c+1}{a-c+1} \ . $$
     
    \item If $b > c \geq a$,
    
    $$ |I| \leq \left\lfloor \frac{b-a}{c-a+1} \right\rfloor +1 \ . $$
    
    \item If $b \leq c$, 
    
    $$ |I| \leq 1 \ . $$ 
    
\end{enumerate}

\end{theorem}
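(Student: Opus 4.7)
The plan is to handle the three cases separately, since they are logically quite different.

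Case 1 ($a, b > c$) is nothing but Theorem \ref{TF}, which has already been stated (and will be proven) as a separate result — so there is nothing new to do here.

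Case 3 ($b \leq c$) is immediate: if there were any pair of indices $i < j$ in $I$, then condition (3) would force $c < |A_i \cap B_j| \leq |B_j| \leq b \leq c$, which is absurd. Hence $|I| \leq 1$.

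The only real work is case 2, where $b > c \geq a$. The plan here is a single-step projection onto the last $B$-set. Let $i^{*} = \max I$ and $m = |I|$, and for each $i \in I$ with $i < i^{*}$ define $A_i' = A_i \cap B_{i^{*}}$. Condition (3) gives $|A_i'| \geq c+1$, while the inclusion $A_j' \cap A_k' \subseteq A_j \cap A_k$ shows that the cumulative-intersection bound (1) is inherited: $|A_i' \cap \bigcup_{k < i,\ k \in I} A_k'| \leq a$ for every such $i$. Writing $\bigcup_{i < i^{*}} A_i'$ as the disjoint union of its ``new parts'' $A_i' \setminus \bigcup_{k < i} A_k'$, the smallest index of $I$ contributes at least $c+1$ elements to $B_{i^{*}}$ and each of the remaining $m - 2$ indices contributes at least $c + 1 - a$ elements. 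This yields $b \geq |B_{i^{*}}| \geq a + (m-1)(c - a + 1)$, and rearranging (using that $m - 1$ is an integer) gives the claimed bound $m \leq \lfloor (b-a)/(c-a+1) \rfloor + 1$.

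There is no real obstacle: case 1 is imported wholesale, case 3 is a one-line contradiction, and the content of case 2 is simply the observation that restricting the $A_i$'s to a fixed $B_{j}$ converts the modified set system into an ordinary ``size of a union'' counting problem. The only mild point to verify is that the asymmetric cumulative condition (1), rather than a uniform bound on $|A_i|$, is preserved under this restriction — and this is immediate from $A_i' \cap A_k' \subseteq A_i \cap A_k$, which is also exactly the feature that makes Theorem \ref{TF} strictly more general than the Skew Two Families Theorem.
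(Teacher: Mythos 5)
Your proposal is correct. Cases 1 and 3 match the paper exactly: case 1 is by definition the content of Theorem \ref{TF} (the paper itself frames Theorem \ref{estimates} as ``Theorem \ref{TF} together with the two degenerate cases''), and case 3 is the same one-line contradiction the paper uses.

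For case 2 your route is the same in spirit but more direct in execution. The paper removes the last index $l$, observes that $((A_i)_{i<l}, B_l)$ is what it calls a \emph{degenerate set system}, and bounds its size via Lemma \ref{Degenerate Cases}, which it in turn obtains from a vector-space version (Lemma \ref{deg vec}) proved by a dimension-counting argument over $B_l$ and then transferred back to sets by the vector-space construction. You instead carry out the exact set-theoretic analogue of that dimension count in place: intersect the earlier $A_i$'s with $B_{i^*}$, note that condition (1) is preserved under this restriction, decompose the union into ``new parts,'' and count $|B_{i^*}| \geq (c+1) + (m-2)(c-a+1) = a + (m-1)(c-a+1)$. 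The paper actually flags this alternative in a remark after Lemma \ref{deg vec} (``Alternatively, one can prove Lemma \ref{Degenerate Cases} directly using the set analogue of the proof of Lemma \ref{deg vec}''), so your argument is the variant the authors mention but do not spell out. What you lose by bypassing the vector-space version is only the ability to re-use it for $i_v$ and the exact (lower-bound-matching) statement in Theorem \ref{better estimates}; for the upper bound required by Theorem \ref{estimates} your argument is complete and slightly more elementary.
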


We will prove Theorem \ref{estimates} in section \ref{TFT}. \\

A key step in the proofs of Theorems \ref{c est} and \ref{vert bound} is defining a sequence $(A_i,B_i)_{i \in I}$ of pairs of finite sets with properties similar to those in Definition \ref{mod set}, but with cardinality replaced by clique number in properties 2 and 3. The following graph theory lemma will allow us to replace these conditions by those in Definition \ref{mod set}, after which we can apply Theorem \ref{estimates}.

\begin{lemma}\label{Graph Theory Lemma}

Let $G$ be a graph. Then there is a subset $S \subseteq V(G)$ such that $|S|-w(S)=|G|-w(G)$ and $|S| \geq 2w(S)$.

\end{lemma}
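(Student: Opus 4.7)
My plan is to take $S\subseteq V(G)$ to be a subset of minimum cardinality satisfying $|S|-\omega(G[S])=|G|-\omega(G)$, and then to show that this minimising $S$ automatically satisfies $|S|\geq 2\omega(G[S])$. Such an $S$ exists because $S=V(G)$ satisfies the equation.

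The first step is the monotonicity observation that for any $v\in S$, writing $S'=S\setminus\{v\}$, we have $\omega(G[S'])\in\{\omega(G[S])-1,\omega(G[S])\}$, with the value $\omega(G[S])-1$ attained exactly when $v$ lies in every maximum clique of $G[S]$. Consequently $|S'|-\omega(G[S'])$ equals either $|S|-\omega(G[S])$ or $|S|-\omega(G[S])-1$. Minimality of $|S|$ rules out the first possibility, so \emph{no vertex of $S$ lies in every maximum clique of $G[S]$}.

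The second step is to reduce the lemma to the following auxiliary claim, which applied contrapositively to $H=G[S]$ immediately yields $|S|\geq 2\omega(G[S])$.

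\emph{Auxiliary Claim.} If $H$ is any graph with $|V(H)|<2\omega(H)$, then some vertex of $H$ lies in every maximum clique of $H$.

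The main obstacle is proving this auxiliary claim. I would proceed by induction on $|V(H)|$, the base $|V(H)|\leq 1$ being trivial. For the inductive step, suppose for contradiction that no vertex of $H$ lies in every maximum clique. Then for each $v\in V(H)$ we have $\omega(H-v)=\omega(H)$ and $|V(H-v)|<2\omega(H-v)$, so the inductive hypothesis applied to $H-v$ supplies a vertex $w_v\in V(H)\setminus\{v\}$ that lies in every maximum clique of $H-v$---equivalently, in every maximum clique of $H$ that does not contain $v$. The key structural input is the pigeonhole fact that, because $|V(H)|<2\omega(H)$, any two maximum cliques of $H$ share at least $2\omega(H)-|V(H)|\geq 1$ vertices. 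Combined with this, the almost-universal vertices $w_v$ can be shown to satisfy: if $v_1,v_2\in V(H)$ are non-adjacent then $w_{v_1}\neq w_{v_2}$ (otherwise the common value would lie in every maximum clique), and each fibre $\{v:w_v=w\}$ is forced to be a clique sitting inside every maximum clique missing $w$. A careful combinatorial analysis of these fibres together with the pairwise-intersection bound should yield the desired contradiction and close the argument.
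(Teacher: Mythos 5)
Your first two steps reproduce the paper's argument exactly: take $S$ minimal with $|S|-\omega(S)=|G|-\omega(G)$ and observe that minimality forces the maximum cliques of $G[S]$ to have empty intersection. Your Auxiliary Claim is also exactly the special case of the lemma the paper invokes at this point (Hajnal's clique-intersection lemma, applied to the collection of \emph{all} maximum cliques: $u(H)+i(H)\geq 2\omega(H)$, so $i(H)=0$ forces $|H|\geq 2\omega(H)$). So the overall route is the same as the paper's; the difference is that the paper cites Hajnal's lemma while you attempt to reprove this special case from scratch.

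The problem is that your proof of the Auxiliary Claim has a genuine gap. The induction on $|V(H)|$ produces, for each $v$, a vertex $w_v\neq v$ lying in every maximum clique avoiding $v$, and you correctly derive that non-adjacent vertices have distinct $w_v$'s and that the fibres $F_w=\{v:w_v=w\}$ are cliques contained in every maximum clique missing $w$. But the final sentence — ``a careful combinatorial analysis of these fibres together with the pairwise-intersection bound should yield the desired contradiction'' — is a hope, not an argument. I tried to close it along several natural lines (injectivity of $v\mapsto w_v$, the directed cycle it contains, counting via a fixed maximum clique $C$ with $|V\setminus C|=n-\omega<\omega$), and none of them terminates in an obvious contradiction without essentially re-deriving Hajnal's lemma. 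It is also worth noting that the standard proof of Hajnal's lemma proceeds by induction on the \emph{number of cliques} in the collection, not on $|V(H)|$; when you remove a clique from the collection you can track $|\cup|+|\cap|$ by an inclusion–exclusion step plus the observation that $I'\cup(U'\cap C_{k+1})$ is a clique. Your vertex-deletion induction goes against the grain of that argument and does not obviously have a clean inductive invariant. The fix is simple: either cite Hajnal's lemma as the paper does, or switch to the clique-counting induction; as written, the Auxiliary Claim is unproved and the lemma does not follow.
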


\begin{proof}

Consider the subsets $S \subseteq V(G)$ satisfying $|S|-w(S)=|G|-w(G)$. Such sets do exist since we can take $S = V(G)$. Take a minimal such $S$. Then the intersection of the maximum cliques of $G[S]$ must be empty, for if there was a vertex $v$ in every maximum clique of $G[S]$, we would have $|S \setminus \{v\}| -w(S \setminus \{v\})=|G|-w(G)$, contradicting the minimality of $S$. Hajnal showed (the lemma in \cite{Reference 6}) that for any graph $H$ and non-empty collection $C$ of maximum cliques in $H$, we have $|\cup_{T \in C} T | + |\cap_{T \in C} T| \geq 2w(H)$. In particular, for any graph $H$ we have $u(H)+i(H) \geq 2w(H)$, where $u(H)$ and $i(H)$ are the size of the union and intersection of all the maximum cliques of $H$, respectively. Hence, if $i(H)=0$, we have $ |H| \geq u(H) \geq 2w(H)$. So $|S| \geq 2w(S)$.

\end{proof}

We will deduce Theorems \ref{c est} and \ref{blow-up} from the following technical lemma.

\begin{lemma}\label{tech}

Let $r \geq 3$ and $t \geq r-2$ be integers and $G$ be a $K_r$-saturated graph. Then there exists a subset $S \subseteq V(G)$ with the following properties.

\begin{enumerate}

    \item $ |S| = O_{t}(1) \ . $ 
    
    \item $$ e(S) \geq t|S| - O\left(  \frac{4^{t-r} \ \min{(r,\sqrt{t-r+3})}}{\sqrt{t-r+3}} + r^2  \right) \ .$$
    
    \item For all $v \in V(G)$, either $|\Gamma(w) \cap S| > t$ for all $ w \in \Gamma(v) \setminus S$ or  $$ |\Gamma(v) \cap S| + \frac{1}{2} |\Gamma(v) \setminus S| > t \ .  $$ 
    
\end{enumerate}

\end{lemma}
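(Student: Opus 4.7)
The plan is to construct $S$ by an iterative augmentation procedure. Initialise $S_0 := \emptyset$. At each step $i \geq 1$, check whether property 3 holds for $S_{i-1}$; if so, terminate and set $S := S_{i-1}$. Otherwise, pick witnesses: $v_i \in V(G)$ and $w_i \in \Gamma(v_i) \setminus S_{i-1}$ with $|\Gamma(w_i) \cap S_{i-1}| \leq t$ and $|\Gamma(v_i) \cap S_{i-1}| + \tfrac12|\Gamma(v_i) \setminus S_{i-1}| \leq t$, and set $S_i := S_{i-1} \cup \{v_i, w_i\} \cup (\Gamma(v_i) \cap \Gamma(w_i))$. The witness condition on $v_i$ forces $|\Gamma(v_i) \setminus S_{i-1}| \leq 2t$, so each iteration adds at most $2t+2$ new vertices, and property 3 holds for the terminal $S$ by construction.

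The main task is bounding the number $N$ of iterations by means of Theorem \ref{estimates}. Two graph-theoretic observations drive the analysis. First, since $v_iw_i$ is an edge in the $K_r$-free graph $G$, $\omega(\Gamma(v_i) \cap \Gamma(w_i)) \leq r-3$. Second, for every $j > i$ we have $w_j \notin S_{j-1} \supseteq S_i$ and in particular $w_j \notin \Gamma(v_i) \cap \Gamma(w_i)$, so at least one of $v_iw_j$ or $w_iw_j$ is a non-edge; $K_r$-saturation then supplies a $K_{r-2}$ in the corresponding common neighbourhood. I will construct pairs $(A_i, B_i)$ taking $A_i$ and $B_i$ to be suitable restrictions of $\Gamma(v_i)$ and $\Gamma(w_i)$, split the index set according to which of $v_iw_j$ or $w_iw_j$ provides the non-edge by a pigeonhole, and apply Lemma \ref{Graph Theory Lemma} on the common neighbourhoods to convert the clique-number conditions into cardinality conditions. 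This yields an $(a,b,c)$-modified set system with $a, b \asymp t-1$ and $c = r-3$; invoking Theorem \ref{estimates}(1) then gives $N \ll \binom{2(t-r+3)-1}{t-r+3} \asymp 4^{t-r}/\sqrt{t-r+3}$ when $b > c$. The degenerate regimes with $t$ close to $r-2$ are covered by Theorem \ref{estimates}(2)--(3), contributing the additive $r^2$ term, and the refinement factor $\min(r,\sqrt{t-r+3})$ is obtained in the regime $r \ll \sqrt{t-r+3}$ by an alternative application of Theorem \ref{Skew}.

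Property 1 then follows from $|S| \leq (2t+2)N = O_t(1)$. Property 2 is established by tracking the deficit $t|S_i| - e(S_i)$ through the iterations: each added batch $\{v_i, w_i\} \cup (\Gamma(v_i) \cap \Gamma(w_i))$ is highly connected (with $v_i$ and $w_i$ joined to every vertex of the common neighbourhood, and edges to $S_{i-1}$ controlled by the witness conditions), so the deficit grows by only $O(1)$ per step, with an $O(r^2)$ boundary contribution matching the stated bound. The hardest part will be the precise construction of the pairs $(A_i, B_i)$ so that all three modified set system conditions hold with the sharp parameters: one must carefully juggle the asymmetric roles of $A_i$ and $B_i$, the dichotomy between the two forced non-edges at each $j > i$, and the loss in Lemma \ref{Graph Theory Lemma} when passing from clique number to cardinality, in order to obtain the full $\min(r, \sqrt{t-r+3})$ refinement.
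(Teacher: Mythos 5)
Your iterative framework and the two graph-theoretic observations are in the right spirit, but the proposal has two genuine gaps, both stemming from the decision to make $S$ grow only by $\{v_i,w_i\}\cup(\Gamma(v_i)\cap\Gamma(w_i))$ at each step rather than by $\Gamma(v_i)$. First, the set-pair structure: for $j>i$ you only obtain that $w_j\notin\Gamma(v_i)\cap\Gamma(w_i)$, i.e.\ the \emph{disjunction} ``$v_iw_j$ or $w_iw_j$ is a non-edge.'' Your plan to resolve this by pigeonholing the index set cannot work directly, because which non-edge holds depends on the pair $(i,j)$ and not on $i$ alone, so there is no partition of $I$ into two parts on which one branch of the disjunction is uniform; a Ramsey-type extraction of a uniform sub-family would cost an exponential factor. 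The paper sidesteps this by taking $S_{i}=\bigcup_{k\le i}\Gamma(v_k)$: then $w_j\notin S_{j-1}$ forces $w_j\notin\Gamma(v_i)$ outright, and moreover $\Gamma(v_i)\cap\Gamma(w_j)\subseteq S_{j-1}$, which is needed so that the $K_{r-2}$ produced by saturation lies inside $A_i\cap B_j$ once $B_j$ is defined as $\Gamma(w_j)\cap S_{j-1}$.

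Second, and more seriously, your argument for property 2 is not correct as stated. You claim the deficit $t|S_i|-e(S_i)$ grows by $O(1)$ per step, but a single batch can consist of $v_i$, $w_i$, and an independent set $\Gamma(v_i)\cap\Gamma(w_i)$ of size $\Theta(t)$, each of whose vertices is adjacent only to $v_i$ and $w_i$; your witness conditions provide only \emph{upper} bounds on the number of edges into $S_{i-1}$, so there is no lower bound to appeal to, and the deficit can grow by $\Theta(t^2)$ in one step (and at least $2t-1$ even when $\Gamma(v_i)\cap\Gamma(w_i)=\emptyset$). Even $\Theta(t)$ growth over $N\asymp 4^tt^{-1/2}$ steps gives $4^tt^{1/2}$, a factor of $t$ too large. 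The paper closes this by (i) choosing $w_i$ to \emph{minimise} $|\Gamma(w_i)\cap\bigcup_{k<i}\Gamma(v_k)|$, which yields $e\bigl(\bigcup_{k<i}A_k,\,A_i\setminus\bigcup_{k<i}A_k\bigr)\ge|A_i\setminus\bigcup_{k<i}A_k|\cdot|B_i|$ and hence $t|S|-e(S)\le 2\sum_i\bigl(t-|A_i\cap\bigcup_{k<i}A_k|\bigr)\bigl(t-|B_i|\bigr)$, and (ii) double-counting this weighted sum as $\sum_{a,b}|I_{a,b}|$, where for each threshold $(a,b)$ the sub-family $I_{a,b}$ satisfies a \emph{tighter} set-pair constraint and hence is bounded by a correspondingly smaller binomial coefficient. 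It is exactly at this point that the new Theorem~\ref{TF}, with its hypothesis on $|A_i\cap\bigcup_{k<i}A_k|$ rather than on $|A_i|$, becomes indispensable. Your sketch engages with neither the minimality choice nor the weighted double-count, and they cannot be bypassed to obtain the stated quantitative bound.
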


\begin{rem}

Since for every integer $t$, there are only finitely many integers $r$ satisfying $r \geq 3$ and $t \geq r-2$, property 1 is equivalent to the statement $|S|=O_{r,t}(1)$.

\end{rem}

\begin{proof}

If $|G| \leq r-2$, we can take $S=V(G)$, so suppose $|G| \geq r-1$. Consider all sequences $(v_i)_{i \in I}$ of vertices in $G$, indexed by some finite, ordered set $I$, with the following properties.

\begin{enumerate}[(a)]

    \item  For all $i \in I$, $ \ | \Gamma(v_i) \cap \cup_{k<i} \ \Gamma(v_k) | + \frac{1}{2} \ |\Gamma(v_i) \setminus \cup_{k<i} \ \Gamma(v_k) | \leq t  \ . $ 
    
    \item For all $i \in I$, there exist vertices $w \in \Gamma(v_i) \setminus \cup_{k<i} \ \Gamma(v_k)$ such that $ |\Gamma(w) \cap \cup_{k<i} \ \Gamma(v_k) | \leq t \ $.

\end{enumerate}

Such sequences do exist since we can take the empty sequence. Note that property (b) forces the $v_i$ to be distinct. Hence we can take a maximal such $(v_i)_{i \in I}$, in the sense that one cannot adjoin a vertex at the end of the sequence and obtain a sequence with the same properties. Let $S = \cup_{i \in I} \ \Gamma(v_i)$. Then property 3 holds by the maximality of the sequence. \\

For each $i \in I$, pick a vertex $w_i \in \Gamma(v_i) \setminus \cup_{k<i} \ \Gamma(v_k)$ that minimises $|\Gamma(w_i) \cap \cup_{k<i} \ \Gamma(v_k) |$. Let $A_i = \Gamma(v_i)$ and $B_i = \Gamma(w_i) \cap \cup_{k<i} \ \Gamma(v_k)$. Then property (a) can be rewritten as

\begin{equation}\label{e1}
| A_i \cap \cup_{k<i} \ A_k | + \frac{1}{2} \ |A_i \setminus \cup_{k<i} \ A_k | \leq t 
\end{equation}

for all $i \in I$ and property (b) is equivalent to 

\begin{equation}\label{e2}
|B_i| \leq t  
\end{equation}

for all $i \in I$. \\

We now show that $(A_i,B_i)_{i \in I}$ has properties 2 and 3 in Definition \ref{mod set}, but with cardinality replaced by clique number. More precisely, we have

\begin{equation}\label{e3}
w(A_i \cap B_i) \leq r-3 
\end{equation}

for all $i \in I$ and 

\begin{equation}\label{e4}
w(A_i \cap B_j) > r-3
\end{equation}

for all $i<j \in I$. To prove (\ref{e3}), note that since $v_i$ and $w_i$ are adjacent and $G$ is $K_r$-free, $G[\Gamma(v_i) \cap \Gamma(w_i)]$ is $K_{r-2}$-free. But $A_i \cap B_i \subseteq \Gamma(v_i) \cap \Gamma(w_i)$, so (\ref{e3}) follows. To prove (\ref{e4}), note that since $v_i$ and $w_j$ are non-adjacent and $G$ is $K_r$-saturated, $G[\Gamma(v_i) \cap \Gamma(w_j)]$ contains a $K_{r-2}$ (since $|G| \geq r-1$, this still holds when $v_i=w_j$). But $A_i \cap B_j = \Gamma(v_i) \cap \Gamma(w_j) $, so (\ref{e4}) follows. \\

Let us now show property 1 holds. Since this only requires a qualitative bound, we can afford to be less careful when bounding quantities. We have $S = \cup_{i \in I} \ A_i$. By (\ref{e1}), $|A_i| \leq 2t$ for all $i \in I$, so it suffices to show $|I|=O_{t}(1)$. Combining (\ref{e3}) and (\ref{e4}) gives $A_i \cap B_j \not \subseteq A_i \cap B_i$ for all $i < j \in I$, which is equivalent to $C_i \cap B_j \neq \emptyset$ for all $i < j \in I$, where $C_i = A_i \setminus B_i$ for each $i \in I$. We also have $|C_i| \leq |A_i| \leq 2t$, $|B_i| \leq t$ by (\ref{e2}) and $C_i \cap B_i = \emptyset$ for all $i \in I$. Hence $(C_i,B_i)_{i \in I}$ is a $(2t,t)$ set system, so $|I| \leq \binom{3t}{t}$ by Theorem \ref{Skew}. \\

Let us now show property 2 holds. Since this requires a quantitative bound, we will need to be more careful when bounding quantities. We have

$$ |S| = |\cup_{i \in I}   A_i| = \sum_{i \in I} |A_i \setminus \cup_{k<i} \ A_k| $$ 

and

$$ e(S) = e(\cup_{i \in I} \  A_i) \geq \sum_{i \in I} e\left( \cup_{k<i} \ A_k , \ A_i \setminus \cup_{k<i} \ A_k \right) \geq \sum_{i \in I} |A_i \setminus \cup_{k<i} \ A_k| \ |B_i|  $$
 
by the minimality of $w_i$. Hence

\begin{equation}\label{e5}
t|S|-e(S) \leq \sum_{i \in I} |A_i \setminus \cup_{k<i} \ A_k| \ (t-|B_i|) \leq 2 \sum_{i \in I} (t-|A_i \cap \cup_{k<i} \ A_k|) \ (t-|B_i|) 
\end{equation} 

by (\ref{e1}) and (\ref{e2}). \\

Our aim now is to show

\begin{equation}\label{e6}
\sum_{i \in I} (t-|A_i \cap \cup_{k<i} \ A_k|) \ (t-|B_i|) \ll  \frac{4^{t-r} \ \min{(r,\sqrt{t-r+3})}}{\sqrt{t-r+3}} + r^2 \ ,
\end{equation} 

which together with (\ref{e5}) proves property 2 holds. Note that (\ref{e1}) implies

\begin{equation}\label{e7}
| A_i \cap \cup_{k<i} \ A_k | \leq t
\end{equation}

for all $i \in I$. We would like to use (\ref{e7}), (\ref{e2}), (\ref{e3}) and (\ref{e4}) to apply Theorem \ref{estimates} to $(A_i,B_i)_{i \in I}$ to prove (\ref{e6}), but there are two obstacles. The first is that in (\ref{e3}) and (\ref{e4}), we have bounds for the clique numbers instead of the cardinalities. The second is that instead of an upper bound for $|I|=\sum_{i \in I} \ 1$, we need an upper bound for the sum in (\ref{e6}), which involves the weights $(t-|A_i \cap \cup_{k<i} \ A_k|) \ (t-|B_i|)$. \\

We first use Lemma \ref{Graph Theory Lemma} to overcome the first problem, as follows. For each $i \in I$, apply Lemma \ref{Graph Theory Lemma} to $G[A_i \cap B_i]$ to obtain a set $D_i \subseteq A_i \cap B_i$ such that

\begin{equation}\label{e8}
|D_i|-w(D_i) = |A_i \cap B_i| - w(A_i \cap B_i)
\end{equation}

and

\begin{equation}\label{e9}
|D_i| \geq 2w(D_i) \ .
\end{equation}

Let $E_i = B_i \setminus D_i$. We then show that we have

\begin{equation}\label{e10}
|E_i| \leq |B_i| - 2w(D_i)
\end{equation}

and

\begin{equation}\label{e11}
|A_i \cap E_i| \leq r - 3 - w(D_i)
\end{equation}

for all $i \in I$ and

\begin{equation}\label{e12}
|A_i \cap E_j| > r-3-w(D_j) 
\end{equation}

for all $i<j \in I$. For (\ref{e10}), note that

$$ |E_i| = |B_i|-|D_i| \leq |B_i|-2w(D_i) $$

by (\ref{e9}). For (\ref{e11}), note that 

$$|A_i \cap E_i| = |A_i \cap B_i| - |D_i| = w(A_i \cap B_i) - w(D_i) \leq r - 3 - w(D_i)$$
 
by (\ref{e8}) and (\ref{e3}). Finally, for (\ref{e12}), note that

$$|A_i \cap E_j| \geq w(A_i \cap E_j) \geq w(A_i \cap B_j) - w(D_j) > r-3-w(D_j)  $$ 

by (\ref{e4}). \\

We now address the problem of the weights. For all integers $1 \leq a,b \leq t $, let $I_{a,b} = \{ i \in I : \ t-|A_i \cap \cup_{k<i} \ A_k| \geq a, \ t-|B_i| \geq b \}$. By double counting the number of triples $(i,a,b)$, where $i \in I$ and $a$ and $b$  are integers, with $1 \leq a \leq t-|A_i \cap \cup_{k<i} \ A_k|$ and $1 \leq b \leq t-|B_i|$ (and using (\ref{e2}) and (\ref{e7})), we obtain
 
\begin{equation}\label{e13}
\sum_{i \in I} (t-|A_i \cap \cup_{k<i} \ A_k|) \ (t-|B_i|) = \sum_{\substack{1 \leq a \leq t \\ 1 \leq b \leq t}} |I_{a,b}| \ .
\end{equation}

%$$  \sum_{i>f} (t-|A_i \cap \ \cup_{j<i} \ A_j|) \ (t-|B_i|) =  \ .  $$

Note that for $i \in I_{a,b}$, we have

\begin{equation}\label{e14}
| A_i \cap \cup_{k<i} \ A_k | \leq t-a \ 
\end{equation}

and

\begin{equation}\label{e15}
|E_i| \leq t-b-2w(D_i) 
\end{equation}

by (\ref{e10}). \\

We would now like to use (\ref{e14}), (\ref{e15}), (\ref{e11}) and (\ref{e12}) to apply Theorem \ref{estimates} to $(A_i,E_i)_{i \in I_{a,b}}$ to bound $|I_{a,b}|$, but cannot since the bounds depend on the $w(D_i)$. We would like to overcome this by partitioning $I_{a,b}$ into parts $I_{a,b,c}$ depending on the value of $w(D_i)$. Then depending on the values of $a$, $b$ and $c$, we would bound $|I_{a,b,c}|$ using case 1, 2 or 3 of Theorem \ref{estimates}. Note that by doing this we discard the information that (\ref{e12}) holds for $i$ and $j$ in different parts. The contributions from case 3 turn out to be the ones that ultimately give rise to the $r^2$ term in (\ref{e6}), but since we would have to sum over all possible values of $c$, we would obtain an inferior bound of $r^3$ instead. So instead we deal with case 3 before partitioning $I_{a,b}$ and discarding information, as follows. \\

If $I$ is empty, the sum in (\ref{e6}) is 0, so suppose otherwise. Let $f$ be the first element of $I$. Note that $f \in I_{a,b}$ for all $a$ and $b$. Then for all $f<i \in I_{a,b}$, we have 

$$ r-3-w(D_i) < |A_f \cap E_i| \leq |E_i| \leq t-b-2w(D_i) $$

by (\ref{e12}) and (\ref{e15}), which implies 

\begin{equation}\label{e16}
w(D_i) \leq t-r+2-b \ .
\end{equation}

For all integers $1 \leq a,b \leq t$ and $0 \leq c \leq \min{(r-3, \ t-r+2-b)}$, let $I_{a,b,c} = \{ i \in I_{a,b} \setminus \{f\} : \ w(D_i)=c \}$, with the order induced by $I$. Note that the $I_{a,b,c}$ partition $I_{a,b} \setminus \{f\}$ for all $a$ and $b$ by (\ref{e3}) and (\ref{e16}). By (\ref{e14}), (\ref{e15}), (\ref{e11}) and (\ref{e12}), $(A_i,E_i)_{i \in I_{a,b,c}}$ is a $(t-a, \ t-b-2c, \ r-3-c)$ modified set system for all $a$, $b$ and $c$, so when $t-a > r-3-c$, we have 

\begin{equation*}
|I_{a,b,c}| \ll \binom{2t-2r+7-a-b}{t-r+4-a+c}
\end{equation*}

by part 1 of Theorem \ref{estimates} and when $t-a \leq r-3-c$, we have

\begin{equation*}
|I_{a,b,c}| \leq \left\lfloor \frac{a-b-2c}{a-c-t+r-2} \right\rfloor + 1
\end{equation*}

by part 2 of Theorem \ref{estimates}. (Note that the $\cup_{k<i} \ A_k$ may change when restricting to $I_{a,b,c}$, but can only become smaller.) \\

Hence

$$ \sum_{i \in I} (t-|A_i \cap \cup_{k<i} \ A_k|) \ (t-|B_i|) = \sum_{\substack{1 \leq a \leq t \\ 1 \leq b \leq t}}  |I_{a,b}| = \sum_{\substack{1 \leq a \leq t \\ 1 \leq b \leq t}} \left( \sum_{\substack{0 \leq c \leq r-3 \\ c \leq t-r+2-b}} |I_{a,b,c}| \ + \ 1 \right)  $$

$$ \ll \sum_{\substack{1 \leq a \leq t \\ 1 \leq b \leq t \\ 0 \leq c \leq r-3 \\ c \leq t-r+2-b \\ t-a > r-3-c}} \binom{2t-2r+7-a-b}{t-r+4-a+c} \ + \sum_{\substack{1 \leq a \leq t \\ 1 \leq b \leq t \\ 0 \leq c \leq r-3 \\ c \leq t-r+2-b \\ t-a \leq r-3-c}} \left( \left\lfloor \frac{a-b-2c}{a-c-t+r-2} \right\rfloor + 1 \right) \ + \ t^2 \  $$

by (\ref{e13}), where the first and second sum consist of the contributions from case 1 and 2, respectively. The contribution of the second sum turns out to be negligible, so we can afford to be less careful when bounding it. Each term in the sum is at most $t-r+3$ and the sum is over at most $t$ values of $a$, $t-r+3$ values of $b$ and $t-r+3$ values of $c$, so the sum is at most $t(t-r+3)^3$. \\

For the first sum, we have

$$ \sum_{\substack{1 \leq a \leq t \\ 1 \leq b \leq t \\ 0 \leq c \leq r-3 \\ c \leq t-r+2-b \\ t-a > r-3-c}} \binom{2t-2r+7-a-b}{t-r+4-a+c} \ \leq \sum_{\substack{1 \leq a \leq t \\ 0 \leq c \leq r-3 \\ c \leq t-r+1 \\ t-a > r-3-c}} \binom{2t-2r+7-a}{t-r+5-a+c} $$

$$ \leq \sum_{\substack{0 \leq c \leq r-3 \\ c \leq t-r+1}} \binom{2t-2r+7}{t-r+3-c} \ \ll \ \frac{4^{t-r} \ \min{(r,\sqrt{t-r+3})}}{\sqrt{t-r+3}}  \ . $$

Putting everything together, we obtain

$$ \sum_{i \in I} (t-|A_i \cap \cup_{k<i} \ A_k|) \ (t-|B_i|) \ \ll \  \frac{4^{t-r} \ \min{(r,\sqrt{t-r+3})}}{\sqrt{t-r+3}} \ + \ t(t-r+3)^3 \ + \ t^2  $$

$$ \ \asymp \ \frac{4^{t-r} \ \min{(r,\sqrt{t-r+3})}}{\sqrt{t-r+3}} \ + \ r^2  \ , $$

which proves (\ref{e6}).

\end{proof}

We now deduce Theorem \ref{c est} from Lemma \ref{tech}. In fact, we prove a slightly stronger result.

\begin{repth1}

For all integers $r \geq 3$ and $t \geq r-2$,

$$ \frac{4^{t-r}}{\sqrt{t-r+3}} + r^2  \ll c(r,t) \leq c'(r,t) \ll  \frac{4^{t-r} \ \min{(r,\sqrt{t-r+3})}}{\sqrt{t-r+3}} + r^2 \ . $$

\end{repth1}

\begin{proof}

We first prove the lower bound. By Lemma \ref{c ineq}, it is sufficient to prove the special case $r=3$, which states that $c(3,t) \gg 4^t t^{-1/2}$ for all integers $t \geq 1$. By properties 1 and 4 in section \ref{constr}, for all integers $t \geq 3$, $G_t$ is $K_3$-saturated and $\delta(G_t)=t$, respectively. Hence, by properties 2 and 3 in section \ref{constr},

$$ c(3,t) \geq t|G_t|-e(G_t) = \frac{1}{2} \binom{2(t-1)}{t-1} + 2t(t-1) \asymp 4^t t^{-1/2}  $$

for all integers $t \geq 3$. It is easy to check that $c(3,1), c(3,2) > 0$ (in fact, as we shall see in section \ref{known}, $c(3,1)=1$ and $c(3,2)=5$), so $c(3,t) \gg 4^t t^{-1/2}$ for all integers $t \geq 1$. \\

We now prove the upper bound. Let $G$ be a $K_r$-saturated graph with $ \delta(G) \geq t$. We need to show that

$$ e(G) \geq t|G| - O\left(  \frac{4^{t-r} \ \min{(r,\sqrt{t-r+3})}}{\sqrt{t-r+3}} + r^2 \ \right) \  . $$

By Lemma \ref{tech}, there exists a subset $S \subseteq V(G)$ with the properties stated in the lemma. Let $T=S \cup \{v \in V(G) : \ |\Gamma(v) \cap S| > t \}$. Then

$$ e(T) \geq e(S) + e(S,T \setminus S) $$ 

$$ \geq t|S| -  O\left(  \frac{4^{t-r} \ \min{(r,\sqrt{t-r+3})}}{\sqrt{t-r+3}} + r^2  \right) + t |T \setminus S| = t |T| -  O\left(  \frac{4^{t-r} \ \min{(r,\sqrt{t-r+3})}}{\sqrt{t-r+3}} + r^2  \right)   $$

by property 2 in the lemma and the definition of $T$. \\

We now claim that for all $v \in V(G)$ we have

$$ |\Gamma(v) \cap T| + \frac{1}{2} |\Gamma(v) \setminus T| \geq t \ . $$

Indeed, by property 3 in the lemma, either $|\Gamma(w) \cap S| > t$ for all $w \in \Gamma(v) \setminus S$ or 

$$ |\Gamma(v) \cap S| + \frac{1}{2} |\Gamma(v) \setminus S| > t \ . $$

In the first case we have $\Gamma(v) \subseteq T$, so 

$$ |\Gamma(v) \cap T| + \frac{1}{2} |\Gamma(v) \setminus T| = d(v) \geq t \ , $$

since  $\delta(G) \geq t$. In the second case we have

$$  |\Gamma(v) \cap T| + \frac{1}{2} |\Gamma(v) \setminus T| \geq  |\Gamma(v) \cap S| + \frac{1}{2} |\Gamma(v) \setminus S| > t \ , $$

since $S \subseteq T$. \\

Hence

$$ e(G) = e(T) + \sum_{v \in V(G) \setminus T} \left( |\Gamma(v) \cap T| + \frac{1}{2} |\Gamma(v) \setminus T| \right) $$

$$ \geq t |T| -   O\left(  \frac{4^{t-r} \ \min{(r,\sqrt{t-r+3})}}{\sqrt{t-r+3}} + r^2  \right) + t |V(G) \setminus T| = t|G| - O\left(  \frac{4^{t-r} \ \min{(r,\sqrt{t-r+3})}}{\sqrt{t-r+3}} + r^2  \right) \ .  $$

\end{proof}

We now deduce Theorem \ref{blow-up}, in its equivalent form, from Lemma \ref{tech}.

\begin{repth2}

For all integers $r \geq 3$, $t \geq r-2$ and $k$, $C(r,t,k)$ is finite. 

\end{repth2}

\begin{proof}

Let $G \in C(r,t,k)$. We need to show that $|G|=O_{r,t,k}(1)$. By Lemma \ref{tech}, there is a subset $S \subseteq V(G)$ with the properties stated in the lemma. Let $T=S \cup \{v \in V(G) : \ |\Gamma(v) \cap S| > t \}$ as before. Then

$$ e(T) \geq e(S) + e(S,T \setminus S) $$ 

$$ \geq t|S| -  O_{r,t}(1) + (t+1) |T \setminus S| = t |T| -  O_{r,t}(1) +  |T \setminus S|  $$

by property 2 in the lemma and the definition of $T$. \\

We now claim that for all $v \in V(G)$ we have

$$ |\Gamma(v) \cap T| + \frac{1}{2} |\Gamma(v) \setminus T| \geq t \ , $$

with equality if and only if $\Gamma(v) \subseteq T$ and $d(v)=t$. Indeed, by property 3 in the lemma, either $|\Gamma(w) \cap S| > t$ for all $w \in \Gamma(v) \setminus S$ or 

$$ |\Gamma(v) \cap S| + \frac{1}{2} |\Gamma(v) \setminus S| > t \ . $$

In the first case we have $\Gamma(v) \subseteq T$ and

$$ |\Gamma(v) \cap T| + \frac{1}{2} |\Gamma(v) \setminus T| = d(v) \geq t \ , $$

as before. In the second case we have

$$  |\Gamma(v) \cap T| + \frac{1}{2} |\Gamma(v) \setminus T| \geq  |\Gamma(v) \cap S| + \frac{1}{2} |\Gamma(v) \setminus S| > t \ , $$

as before.  \\

Let $ R = \{ v \in V(G) : \ \Gamma(v) \subseteq T, d(v)=t \} $. We then have

$$ t|G|+k \ = \ e(G) \ = \ e(T) + \sum_{v \in V(G) \setminus T} \left( |\Gamma(v) \cap T| + \frac{1}{2} |\Gamma(v) \setminus T| \right)  $$ 

$$ \geq \ t |T| - O_{r,t}(1) + |T \setminus S|  + t |V(G) \setminus T| + \frac{1}{2} |V(G) \setminus (T \cup R)| \ = \ t|G| - O_{r,t}(1) + |T \setminus S| + \frac{1}{2} |V(G) \setminus ( T \cup R)| \ .  $$

Hence $|T \setminus S|, \ |V(G) \setminus (T \cup R)| =O_{r,t,k}(1)$. We also have $|S|=O_{r,t}(1)$ by property 1 in the lemma. Combining these, we obtain $|V(G) \setminus (R \setminus T)|=O_{r,t,k}(1)$.  \\

Note that $R \setminus T$ is an independent set of vertices of degree $t$. We now claim that for every subset $N \subseteq V(G) \setminus (R \setminus T)$ there are at most $t$ vertices $v \in R \setminus T$ with $\Gamma(v) = N$. We then have $|R \setminus T| = O_{r,t,k}(1)$ and hence $|G|=O_{r,t,k}(1)$, completing the proof. Suppose for the sake of contradiction that there is a subset $N \subseteq V(G) \setminus (R \setminus T)$ such that $|U| > t$, where $U=\{v \in R \setminus T : \Gamma(v)=N  \}$. \\

Pick a subset $W \subset U$ with $|W|=t$ and let $H=G[V(G) \setminus (U \setminus W)]$. Note that $H$ is smaller than $G$ and that $G$ can be obtained from $H$ by blowing up vertices in $W$, which are non-adjacent and of degree $t$. Hence $H$ is $K_r$-saturated. The only vertices with smaller degree in $H$ than in $G$ are those in $N$. But these vertices are adjacent in $H$ to all vertices in $W$, so $\delta(H) = t$. Since $U \setminus W$ is an independent set of vertices of degree $t$, $e(H)=t|H|+k$. This contradicts the minimality of $G$.

\end{proof}

We now prove Theorem \ref{vert bound}. We first state a lemma that we will need in the proof. To state the lemma, we need the following definition.

\begin{definition}\label{deg set system}

For integers $b > c \geq a \geq 0$, an $(a,b,c)$ \emph{degenerate set system} is a pair $((A_i)_{i \in I}, B)$, where $(A_i)_{i \in I}$ is a sequence of finite sets, indexed by a finite, ordered set $I$, and $B$ is a finite set, with the following properties.

\begin{enumerate}
    \item $|B| \leq b$.
    \item $|A_i \cap B| > c $ for all $i \in I$.
    \item $|A_i \cap \cup_{k<i} \ A_k| \leq a $ for all $i \in I$.
\end{enumerate}

\end{definition}

The lemma is the following slight strengthening of case 2 of Theorem \ref{estimates}.

\begin{lemma}\label{Degenerate Cases}

Let $b > c \geq a \geq 0$ be integers and $((A_i)_{i \in I},B)$ be an $(a,b,c)$ degenerate set system. Then $|I| \leq \left\lfloor \frac{b-a}{c-a+1} \right\rfloor $.

\end{lemma}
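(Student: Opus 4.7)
The plan is to exploit the "new contribution" to $B$ each $A_i$ must make. For each $i \in I$, define the set of new elements
$$ N_i = (A_i \cap B) \setminus \bigcup_{k<i} A_k \ . $$
The key observations are (i) the $N_i$ are pairwise disjoint subsets of $B$, since for $i<j$ we have $N_j \cap A_i = \emptyset$ by definition and $N_i \subseteq A_i$; and (ii) each $N_i$ is reasonably large, because $A_i \cap B$ is large while the part of $A_i$ that overlaps with previous $A_k$'s is small.

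Concretely, I would write
$$ |A_i \cap B| \ \leq \ |N_i| + \bigl|A_i \cap \textstyle\bigcup_{k<i} A_k\bigr| \ \leq \ |N_i| + a \ , $$
so property 2 of Definition \ref{deg set system} gives $|N_i| \geq c-a+1$ for every $i \in I$. For the least element $i_1 \in I$ (if $I$ is non-empty) we get the sharper bound $|N_{i_1}| = |A_{i_1} \cap B| \geq c+1$, since the union over $k < i_1$ is empty.

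Combining disjointness of the $N_i$ inside $B$ with property 1 then yields, assuming $I \neq \emptyset$,
$$ (c+1) + (|I|-1)(c-a+1) \ \leq \ \sum_{i \in I} |N_i| \ \leq \ |B| \ \leq \ b \ , $$
which rearranges to $|I| \leq (b-a)/(c-a+1)$. Since $|I|$ is an integer, $|I| \leq \lfloor (b-a)/(c-a+1) \rfloor$, and the bound is trivial when $I=\emptyset$.

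There is no real obstacle here: the only subtlety is remembering to treat the first index separately in order to gain the extra $+1$ that turns the weaker bound $|I|(c-a+1) \leq b$ into the desired $|I|(c-a+1) \leq b-a$. Everything else is a one-line disjointness argument followed by arithmetic.
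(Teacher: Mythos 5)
Your proof is correct, and it is essentially the set analogue of the paper's proof: the paper proves the vector space version (Lemma \ref{deg vec}) by the same telescoping-contributions argument with the first index singled out, then deduces Lemma \ref{Degenerate Cases} via the vector space construction, and explicitly notes in a remark that one can instead prove the set version directly in this way. Your $N_i = (A_i\cap B)\setminus\bigcup_{k<i}A_k$ is exactly the set-theoretic counterpart of the increments $\dim(\sum_{k\le i}(A_k\cap B))-\dim(\sum_{k<i}(A_k\cap B))$ in the paper's proof.
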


We will prove Lemma \ref{Degenerate Cases} in section \ref{TFT}. We are now ready to prove Theorem \ref{vert bound}. The proof will be similar to that of Lemma \ref{tech}.

\begin{repth3}

For all integers $r \geq 3$ and $t \geq r-2$,

$$ \frac{4^{t-r}}{\sqrt{t-r+3}} + r \ll f(r,t) \ll \frac{4^{t-r} \min{(r,\sqrt{t-r+3})}}{\sqrt{t-r+3}} + r \ . $$

\end{repth3}

\begin{proof}

We first prove the lower bound. We need to show that $f(r,t) \gg r$ and $f(r,t) \gg 4^{t-r} / \sqrt{t-r+3}$. For the former, consider the graph $K_{r-1}$. For the latter, by Lemma \ref{f ineq}, it is sufficient to prove the special case $r=3$, which states that $f(3,t) \gg 4^t t^{-1/2}$ for all integers $t \geq 1$. By properties 1, 4 and 5 in section \ref{constr}, $f(3,t) \geq \binom{2(t-1)}{t-1} \gg 4^t t^{-1/2}$ for all integers $t \geq 3$. Since $f(r,t) \gg r$, $f(r,t)>0$, so in particular $f(3,1),f(3,2) >0$. Hence $f(3,t) \gg 4^t t^{-1/2}$ for all integers $t \geq 1$.   \\

We now prove the upper bound. Let $G$ be a $K_r$-saturated graph. We need to bound the number of vertices in $G$ of degree at most $t$ which are adjacent to another vertex of degree at most $t$. If $|G| \leq r-2$, the upper bound holds, so suppose $|G| \geq r-1$. Consider all sequences $(v_i,w_i)_{i \in I}$ of pairs of vertices in $G$, indexed by some finite, ordered set $I$, with the following properties.

\begin{enumerate}

    \item  For all $i \in I$, $d(v_i), d(w_i) \leq t \ . $  
    
    \item For all $i \in I$, $v_i$ and $w_i$ are adjacent. 
    
    \item For all $i < j \in I$, $v_i$ and $w_j$ are not adjacent.
    
\end{enumerate}

Such sequences do exist since we can take the empty sequence. Note that properties 2 and 3 force the $v_i$ to be distinct (and the $w_i$ to be distinct). Hence we can take a maximal such $(v_i,w_i)_{i \in I}$, in the sense that one cannot adjoin a pair at the end of the sequence and obtain a sequence with the same properties. Then by the maximality of the sequence, every vertex in $G$ of degree at most $t$ adjacent to another vertex of degree at most $t$ is in $\cup_{i \in I} \ \Gamma(v_i)$, so it suffices to bound  $|\cup_{i \in I} \Gamma(v_i)|$. \\

For each $i \in I$, let $A_i = \Gamma(v_i)$ and $B_i = \Gamma(w_i)$. Then property 1 can be rewritten as 

\begin{equation}\label{e20}
|A_i| \leq t
\end{equation}

and

\begin{equation}\label{e23}
|B_i| \leq t
\end{equation}

for all $i \in I$. As in Lemma \ref{tech}, we now show that $(A_i,B_i)_{i \in I}$ has properties 2 and 3 in Definition \ref{mod set}, but with cardinality replaced  by clique number. More precisely, we have

\begin{equation}\label{e18}
w(A_i \cap B_i) \leq r-3
\end{equation}

for all $i \in I$ and

\begin{equation}\label{e19}
w(A_i \cap B_j) > r-3
\end{equation}

for all $i<j \in I$. To prove (\ref{e18}), note that $v_i$ and $w_i$ are adjacent by property 2 and $G$ is $K_r$-free. For (\ref{e19}), note that $v_i$ and $w_j$ are non-adjacent by property 3 and $G$ is $K_r$-saturated (since $|G| \geq r-1$, this still holds when $v_i=w_j$). \\

We have

\begin{equation}\label{e21}
|\cup_{i \in I} \Gamma(v_i)| = |\cup_{i \in I} A_i| = \sum_{i \in I} |A_i \setminus \cup_{k < i} \ A_k| \leq \sum_{i \in I} \left(t-|A_i \cap \cup_{k<i} \ A_k| \right)
\end{equation}

by (\ref{e20}). Our aim now is to show 

\begin{equation}\label{e22}
\sum_{i \in I} \left(t-|A_i \cap \cup_{k<i} \ A_k| \right) \ll \frac{4^{t-r} \min{(r,\sqrt{t-r+3})}}{\sqrt{t-r+3}} + r \ , 
\end{equation}

which together with (\ref{e21}) proves the upper bound. Note that (\ref{e20}) implies 

\begin{equation}\label{e24}
|A_i \cap \cup_{k<i} \ A_k| \leq t
\end{equation}

for all $i \in I$. As before, we would like to use (\ref{e24}), (\ref{e23}), (\ref{e18}) and (\ref{e19}) to apply Theorem \ref{estimates} to $(A_i,B_i)_{i \in I}$ to prove (\ref{e22}), but there are two obstacles. The first is that in (\ref{e18}) and (\ref{e19}), we have bounds for the clique numbers instead of the cardinalities. The second is that instead of an upper bound for $|I|=\sum_{i \in I} \ 1$, we need an upper bound for the sum in (\ref{e22}), which involves the weights $(t-|A_i \cap \cup_{k<i} \ A_k|)$. \\
 
As in Lemma \ref{tech}, we first use Lemma \ref{Graph Theory Lemma} to overcome the first problem. For each $i \in I$, apply Lemma \ref{Graph Theory Lemma} to $G[A_i \cap B_i]$ to obtain a set $C_i \subseteq A_i \cap B_i$ such that

\begin{equation}\label{e25}
|C_i| - w(C_i) = |A_i \cap B_i| - w(A_i \cap B_i)
\end{equation}

and

\begin{equation}\label{e26}
|C_i| \geq 2w(C_i) \ .
\end{equation}

For each $i \in I$, let $D_i = B_i \setminus C_i$. We then show that we have 

\begin{equation}\label{e27}
|D_i| \leq t - 2w(C_i)
\end{equation}

and

\begin{equation}\label{e28}
|A_i \cap D_i| \leq r-3-w(C_i)
\end{equation}

for all $i \in I$ and

\begin{equation}\label{e29}
|A_i \cap D_j| > r-3-w(C_j)
\end{equation}

for all $i<j \in I$. For (\ref{e27}), note that

$$ |D_i|=|B_i|-|C_i| \leq  t - 2w(C_i) $$

by (\ref{e23}) and (\ref{e26}). For (\ref{e28}), note that

$$ |A_i \cap D_i| = |A_i \cap B_i| - |C_i| = w(A_i \cap B_i) - w(C_i) \leq r - 3 - w(C_i) $$

by (\ref{e25}) and (\ref{e18}). Finally, for (\ref{e29}), note that 

$$|A_i \cap D_j| \geq w(A_i \cap D_j) \geq w(A_i \cap B_j) - w(C_j) > r-3-w(C_j)$$

by (\ref{e19}). \\

We now address the problem of the weights as before. For all integers $1 \leq a \leq t$, let $I_a=\{i \in I : \ t-|A_i \cap \cup_{k<i} \ A_k| \geq a \}$. By double counting the number of pairs $(i,a)$, where $i \in I$ and $a$ is an integer, with $1 \leq a \leq t-|A_i \cap \cup_{k<i} \ A_k|$ (and using (\ref{e24})), we obtain

\begin{equation}\label{e69}
\sum_{i \in I} \left(t-|A_i \cap \cup_{k<i} \ A_k| \right) = \sum_{1 \leq a \leq t} |I_a| \ .
\end{equation} 

Note that for $i \in I_a$, we have

\begin{equation}\label{e30}
|A_i \cap \cup_{k<i} \ A_k| \leq t-a \ .
\end{equation}

As in Lemma \ref{tech}, we would now like to use (\ref{e30}), (\ref{e27}), (\ref{e28}) and (\ref{e29}) to apply Theorem \ref{estimates} to $(A_i,D_i)_{i \in I_a}$ to bound $|I_a|$, but cannot since the bounds depend on the $w(C_i)$. We would like to overcome this by partitioning $I_a$ into parts $I_{a,b}$ depending on the value of $w(C_i)$. Then depending on the values of $a$ and $b$, we would bound $|I_{a,b}|$ using case 1, 2 or 3 of Theorem \ref{estimates}. \\

As before, the contributions from case 3 turn out to be the ones that ultimately give rise to the $r$ term in (\ref{e22}), but since we would have to sum over all possible values of $b$, we would obtain an inferior bound of $r^2$ instead, so we instead deal with case 3 before partitioning $I_{a}$. Unlike in Lemma \ref{tech}, however, the contributions from case 2 would turn out to not be negligible and indeed would ultimately give an inferior bound of $r \log r$ instead of $r$, so this time we need to also deal with case 2 before partitioning $I_{a}$.  \\

We first deal with case 3 as in Lemma \ref{tech}. If $I$ is empty, the sum in (\ref{e22}) is 0, so suppose otherwise. Let $f$ be the first element of $I$. Note that $f \in I_a$ for all $a$. Then for all $f<i \in I$, we have

$$r-3-w(C_i) < |A_f \cap D_i| \leq |D_i| \leq t-2w(C_i) $$

by (\ref{e29}) and (\ref{e27}), which implies

\begin{equation}\label{e31}
w(C_i) \leq t-r+2 \ .
\end{equation}

\leavevmode\newline We now deal with case 2 using Lemma \ref{Degenerate Cases}, depending on the value of $a$. For $a \geq 2t-2r+5$, there are no $i \in I_a$ in case 1, so the only remaining case is case 2. For such $a$ we show that $|I_a| \leq 2$, as follows. Let $l$ be the last element of $I_a$ in the order induced by $I$. If $l=f$, $I_a=\{f\}$, so suppose $l>f$. Then $w(C_l) \leq t-r+2$ by (\ref{e31}). Order $I_a \setminus \{l\}$ using the order induced by $I$. Then by (\ref{e27}), (\ref{e29}) and (\ref{e30}), $((A_i)_{i \in I_a \setminus \{l\}},\ D_l)$ is a $(t-a,\ t-2w(C_l), \ r-3-w(C_l))$ degenerate set system, so

$$ |I_a| = |I_a \setminus \{l\}| + 1 \leq \left \lfloor \frac{a-2w(C_l)}{a-w(C_l)-t+r-2} \right \rfloor + 1 = 2  $$

by Lemma \ref{Degenerate Cases}. (Note that the $\cup_{k<i} \ A_k$ may change when restricting to $I_a \setminus \{l\}$, but can only become smaller.)

\leavevmode\newline For $a \leq 2t-2r+4$, let $J_a = \{ i \in I_a : \ w(C_i) \leq a-t+r-3 \}$ be the set of $i \in I_a$ in case 2. We claim that $|J_a| \leq 2t-2r+7-a$. Indeed, if $|J_a| \leq 1$, the bound holds, so suppose $|J_a| \geq 2$. Let $l$ be the last element of $J_a$ in the order induced by $I$. Since $|J_a| \geq 2$, $l>f$, so $w(C_l) \leq t-r+2$ by (\ref{e31}). Order $J_a \setminus \{l\}$ using the order induced by $I$. Then, as before, by (\ref{e27}), (\ref{e29}) and (\ref{e30}), $((A_i)_{i \in J_a \setminus \{l\}}, \ D_l)$ is a $(t-a, \ t-2w(C_l), \ r-3-w(C_l))$ degenerate set system, so

$$ |J_a| = |J_a \setminus \{l\}| + 1 \leq \left \lfloor \frac{a-2w(C_l)}{a-w(C_l)-t+r-2} \right \rfloor + 1 \leq 2t-2r+7-a $$

by Lemma \ref{Degenerate Cases}. (Again, the $\cup_{k<i} \ A_k$ may change when restricting to $J_a \setminus \{l\}$, but can only become smaller.)

\leavevmode\newline We now bound $|I_a \setminus J_a|$. For each integer $\max{(0, \ a-t+r-2)} \leq b \leq \min{(r-3, \ t-r+2)}$, let $I_{a,b} = \{ i \in (I_a \setminus J_a) \setminus \{f\} : \ w(C_i) = b \}$, with the order induced by $I$. Note that the $I_{a,b}$ partition $(I_a \setminus J_a) \setminus \{f\}$ by (\ref{e18}) and (\ref{e31}). Then by (\ref{e30}), (\ref{e27}), (\ref{e28}) and (\ref{e29}), $(A_i, D_i)_{i \in I_{a,b}}$ is a $(t-a, \ t-2b, \ r-3-b)$ modified set system, so 

$$ |I_{a,b}| \ll \binom{2t-2r+7-a}{t-r+4-a+b} $$

by part 1 of Theorem \ref{estimates}. (Once again, the $\cup_{k<i} \ A_k$ may change when restricting to $I_{a,b}$, but can only become smaller.) \\

Hence
 
$$ |I_a \setminus J_a| \ \leq \ |(I_a \setminus J_a) \setminus \{f\}| \ + \ 1 \ \ = \sum_{\substack{0 \leq b \leq r-3 \\ a-t+r-2 \leq b \\ b \leq t-r+2}} |I_{a,b}| \ + \ 1 \ \ll \sum_{\substack{0 \leq b \leq r-3 \\ a-t+r-2 \leq b \\ b \leq t-r+2}} \binom{2t-2r+7-a}{t-r+4-a+b} \ + \ 1 \ , $$

so 
 
$$ |I_a| \ = \ |I_a \setminus J_a| + |J_a| \ \ll \sum_{\substack{0 \leq b \leq r-3 \\ a-t+r-2 \leq b \\ b \leq t-r+2}} \binom{2t-2r+7-a}{t-r+4-a+b} \ + \ (2t-2r+8-a) $$   

for $a \leq 2t-2r+4$. \\

Combining our upper bounds for the $|I_a|$, we obtain

$$ \sum_{i \in I} \left(t-|A_i \cap \cup_{k<i} \ A_k| \right) \ = \ \sum_{1 \leq a \leq t} |I_a| $$

$$ \ll \sum_{\substack{1 \leq a \leq t \\ a \leq 2t-2r+4}} \left[ \sum_{\substack{0 \leq b \leq r-3 \\ a-t+r-2 \leq b \\ b \leq t-r+2}} \binom{2t-2r+7-a}{t-r+4-a+b} \ \ + \ (2t-2r+8-a) \right] \ + \ \sum_{\substack{a \geq 2t-2r+5 \\ a \leq t}} 2  $$ 

$$ \ll \sum_{\substack{1 \leq a \leq t \\ a \leq 2t-2r+4 \\ 0 \leq b \leq r-3 \\ a-t+r-2 \leq b \\ b \leq t-r+2 }} \binom{2t-2r+7-a}{t-r+4-a+b} \ + \ (t-r+3)^2 \ + \ t  $$

by (\ref{e69}). \\

We have

$$ \sum_{\substack{1 \leq a \leq t \\ a \leq 2t-2r+4 \\ 0 \leq b \leq r-3 \\ a-t+r-2 \leq b \\ b \leq t-r+2 }} \binom{2t-2r+7-a}{t-r+4-a+b} \ \leq \sum_{\substack{0 \leq b \leq r-3 \\ b \leq t-r+2}} \binom{2t-2r+7}{t-r+4-b} \ \ll \ \frac{4^{t-r} \min{\left(r,\sqrt{t-r+3}\right)}}{\sqrt{t-r+3}} \ . $$ 

Putting everything together, we obtain

$$ \sum_{i \in I} \left(t-|A_i \cap \cup_{k<i} \ A_k| \right) \ \ll \  \frac{4^{t-r} \min{\left(r,\sqrt{t-r+3}\right)}}{\sqrt{t-r+3}} \ + \ (t-r+3)^2 \ + \ t \ \asymp \frac{4^{t-r} \min{\left(r,\sqrt{t-r+3}\right)}}{\sqrt{t-r+3}} + r \ , $$

which proves (\ref{e22}).

\end{proof}

We now use Pikhurko's argument and our improved upper bound for $f(r,t)$ in Theorem \ref{vert bound} to deduce Theorem \ref{cover}.

\begin{repth4}\label{cov}

Every large $K_r$-saturated graph ($r \geq 3$) with $e$ edges has a vertex cover of size $O(e / \log e)$.

\end{repth4}

\begin{proof}

Let $G$ be a $K_r$-saturated graph with $n$ vertices and $e$ edges, where $r \geq 3$ and $n$ is large. If $n<r$, $G=K_n$, so $e/\log e \asymp n^2/\log n$. But $G$ trivially has a vertex cover of size $n = O(n^2/\log n)$, so suppose $n \geq r$. Then $e \geq sat(n,K_r) = (r-2)n-\binom{r-1}{2} \gg r^2$, so $r \ll \sqrt{e}$. \\

Let $t \geq r-2$ be an integer, to be chosen later. Let $A=\{ v \in V(G) : \ d(v) > t \}$ and $B = \{ v \in V(G) : \ d(v) \leq t , \ \exists w \in \Gamma(v) : d(w) \leq t \}$. Then $A \cup B$ is a vertex cover of $G$. We have

$$ |A \cup B| = |A| + |B| \leq \frac{2e}{t} + f(r,t) \ll \frac{e}{t} + \frac{4^{t-r} \ \min{(r,\sqrt{t-r+3})}}{\sqrt{t-r+3}} + r \ll 
\frac{e}{t-r+3} + 4^{t-r+3} + \sqrt{e}  $$

by Theorem \ref{vert bound}. Optimising in $t$ gives a vertex cover of size $O(e / \log e)$. 

\end{proof}

We now use the graphs $G_t$ from section \ref{constr} to prove Theorem \ref{tight}.

\begin{repth5}

Let $r \geq 3$ be an integer and $n$ and $e$ be large quantities with $rn \ll e \ll n^2$. Then there exists a $K_r$-saturated graph $G$ with $|G| \asymp n$ and $e(G) \asymp e$ such that every vertex cover in $G$ has size $\Omega(\min{(e / \log e , n)})$. 

\end{repth5}

\begin{proof}

As when proving the lower bounds for $c(r,t)$ and $f(r,t)$, by adding conical vertices, it is sufficient to prove the special case $r=3$. Indeed, suppose $r$, $n$ and $e$ are as in the theorem. Then by the case $r=3$, there is a $K_3$-saturated graph $H$ with $|H| \asymp n$ and $e(H) \asymp e$ such that every vertex cover in $H$ has size $\Omega(\min{(e / \log e , n)})$. Let $G$ be the graph obtained by adding $r-3$ conical vertices to $H$. Then $G$ is $K_r$-saturated and every vertex cover in $G$ has size $\Omega(\min{(e / \log e , n)})$. Note that $rn \ll e \ll n^2$ implies $r \ll n$ and hence also $r^2 \ll e$. Hence $|G|=|H|+(r-3) \asymp n$ and $e(G)=e(H)+(r-3)|H|+\binom{r-3}{2} \asymp e$. \\

We now prove the special case $r=3$, which states that for all large quantities $n$ and $e$ with $n \ll e \ll n^2$, there exists a $K_3$-saturated graph $G$ with $|G| \asymp n$ and $e(G) \asymp e$ such that every vertex cover in $G$ has size $\Omega(\min{(e / \log e , n)})$. The broad idea of the proof is as follows. Note that by properties 1, 2, 3 and 5 in section \ref{constr}, $G_t$ is $K_3$-saturated, $|G_t| \asymp 4^t t^{-1/2}$, $e(G_t) \asymp 4^t t^{1/2}$ and every vertex cover in $G_t$ has size at least $\frac{1}{2} \binom{2(t-1)}{t-1} \asymp 4^t t^{-1/2}$, respectively. Hence $G_t$ can be used to prove the result when $e / \log e \asymp n$. To extend this to general $n$ and $e$, we will modify $G_t$ depending on whether $e / \log e \ll n$ or  $e / \log e \gg n$. \\

We first consider the case $e / \log e \ll n$. We are given large quantities $n$ and $e$ with $e / \log e \ll n \ll e$ and need to construct a $K_3$-saturated graph $G$ with $|G| \asymp n$ and $e(G) \asymp e$ such that every vertex cover in $G$ has size $\Omega(e / \log e)$. The key to doing this is the fact that $G_t$ has bounded chromatic number. \\

Pick an integer $t \geq 3$ such that $4^t t^{1/2} \asymp e$. Note that $e / \log e \asymp 4^t t^{-1/2} $. By property 6 in section \ref{constr}, there exist three maximally independent subsets of $V(G_t)$ that cover it. Let $G_t''$ be the graph obtained from $G_t$ as follows. Add three new non-adjacent vertices, one for each maximally independent set, and join each new vertex to all vertices in its maximally independent set. Then add one final vertex and join it to the three new vertices. \\

It is easy to check that $G_t''$ has the properties of $G_t$ stated previously - $G_t''$ is $K_3$-saturated, $|G_t''| \asymp 4^t t^{-1/2}$, $e(G_t'') \asymp 4^t t^{1/2}$ and every vertex cover in $G_t''$ has size at least $\frac{1}{2} \binom{2(t-1)}{t-1} \asymp 4^t t^{-1/2}$ - and the additional property of having a vertex of degree three. Pick an integer $N \geq 0$ such that $N \asymp n$. Let $G$ be the graph obtained from $G_t''$ by blowing up the vertex of degree three by $N+1$. Then $G$ is $K_3$-saturated and every vertex cover in $G$ has size $\Omega(e / \log e)$. Note that 

$$|G_t''| \ \asymp \ 4^t t^{-1/2} \ \asymp \ e / \log e \ \ll \ n \ \asymp \ N \ \ll \ e \ \asymp \ 4^t t^{1/2} \ \asymp \ e(G_t'') \ . $$

Hence $|G|=|G_t''|+N \asymp n$ and $e(G)=e(G_t'')+3N \asymp e$. \\

We now consider the case $e / \log e \gg n$. We are given large quantities $n$ and $e$ with $n \log n \ll e \ll n^2$ and need to construct a $K_3$-saturated graph $G$ with $|G| \asymp n$ and $e(G) \asymp e$ such that every vertex cover in $G$ has size $\Omega(n)$. Pick an integer $t \geq 3$ such that $t \asymp e/n$ and $4^t t^{-1/2} \gg n$. This is possible since $e/n \gg \log n$. Note that $t \ll n$. Let $G_t'$ be as in property 7 in section \ref{constr}. \\

Pick an integer $N$ with $0 \leq N \leq \frac{1}{2} \binom{2(t-1)}{t-1}$ and $N \asymp n$. This is possible since $\frac{1}{2} \binom{2(t-1)}{t-1} \asymp 4^t t^{-1/2} \gg n$. Let $G$ be a graph obtained from $G_t'$ by keeping $N$ of the pairs of matching vertices in $R$ and removing the rest. Then $G$ is $K_3$-saturated by property 7 in section \ref{constr}  and contains a matching of size $N$, so every vertex cover in $G$ has size $\Omega(n)$. We have $|G|=2N+(2t-1) \asymp n$ and $e(G)=(2t-1)N+2(t-1) \asymp e$.

\end{proof}

\section{The Two Families Theorem}\label{TFT}

In this section we prove Theorem \ref{estimates}, Lemma \ref{Degenerate Cases} and further related results. We first establish some linear algebra notation in section \ref{linear notation}. Next, we  put our results into context in section \ref{context}. In section \ref{definitions} we introduce some definitions necessary for stating and proving these results, which we then prove in section \ref{results}. 

\subsection{Linear algebra notation}\label{linear notation}

Let $F$ be an \emph{infinite} field. All our vector spaces will be over $F$ and finite dimensional. Given such a vector space $V$, we write $\text{dim}(V)$ for the dimension of $V$. We denote by $0$ the zero subspace of $V$. Given subspaces $V_i$ of $V$, indexed by some finite set $I$, we write $\sum_{i \in I} V_i = \left\{ \sum_{i \in I} v_i : v_i \in V_i \text{ for all } i \right\}$ for the smallest subspace of $V$ contatining all the $V_i$. We say that the sum $\sum_{i \in I} V_i$ is a direct sum if each of its elements can be written \emph{uniquely} as a sum of $v_i \in V_i$. Given a subset $S$ of $V$, we denote by $\text{span}(S)$ the linear span of $S$, the smallest subspace of $V$ containing $S$, consisting of all linear combinations of elements of $S$.

\subsection{Context}\label{context}

The Two Families Theorem is a celebrated result in extremal set theory. It was first stated and proved by Bollob\'as in 1965 (see the lemma in \cite{Reference 9} for the original version of the theorem, which is a weighted generalisation of Theorem \ref{original} below; see also \cite{Reference 10} for an alternative, elegant proof of Theorem \ref{original} by Katona). Since then it has been generalised in several different ways and found numerous applications (see \cite{Reference 11} through \cite{Reference 22}). The simplest version of the Two Families Theorem is as follows. 

\begin{theorem}[Two Families Theorem, Bollob\'as, 1965]\label{original}

Let $a,b \geq 0$ be integers and $(A_i,B_i)_{i \in I}$ be a collection of pairs of finite sets, indexed by a finite set $I$, with the following properties.

\begin{enumerate}
    \item $|A_i| \leq a$ and $|B_i| \leq b$ for all $i \in I$.
    \item $A_i \cap B_i = \emptyset $ for all $i \in I$.
    \item $A_i \cap B_j \neq \emptyset $ for all $i \neq j \in I$.
\end{enumerate}

Then $|I| \leq \binom{a+b}{a}$. 

\end{theorem}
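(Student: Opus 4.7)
The plan is to prove the theorem by Katona's elegant random-permutation argument. I would first reduce to the case where $|A_i| = a$ and $|B_i| = b$ for every $i \in I$: given an arbitrary instance, for each $i$ adjoin to $A_i$ a set of $a - |A_i|$ fresh dummy elements and to $B_i$ a set of $b - |B_i|$ fresh dummies, all chosen disjoint from the existing ground set and from the dummies added for other indices. Since these dummies appear in no other pair, they cannot create any new cross-intersection nor violate $A_i \cap B_i = \emptyset$; conditions 1--3 persist (with equality in the cardinality bounds) and $|I|$ is unchanged.

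Next, let $U = \bigcup_{i \in I}(A_i \cup B_i)$ and let $\pi$ be a uniformly random total order on $U$. For each $i \in I$, let $E_i$ be the event that every element of $A_i$ precedes every element of $B_i$ under $\pi$. Since $A_i \cap B_i = \emptyset$ and $|A_i \cup B_i| = a+b$, the relative order of $A_i \cup B_i$ induced by $\pi$ is uniform on the $(a+b)!$ possibilities, of which exactly $a!\,b!$ are favourable, giving
$$\Pr(E_i) \;=\; \frac{1}{\binom{a+b}{a}}.$$

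The crux is showing the events $E_i$ are pairwise disjoint. Fix distinct $i, j \in I$ and suppose $E_i$ and $E_j$ both occur. Applying condition 3 to the ordered pairs $(i,j)$ and $(j,i)$, pick $w \in A_i \cap B_j$ and $z \in A_j \cap B_i$; condition 2 forces $w \neq z$, since otherwise $w \in A_i \cap B_i$. Then $E_i$ demands that $w \in A_i$ precede $z \in B_i$, while $E_j$ demands that $z \in A_j$ precede $w \in B_j$, a contradiction. Summing the probabilities over the disjoint events $E_i$ therefore yields $|I|/\binom{a+b}{a} \leq 1$, that is, $|I| \leq \binom{a+b}{a}$.

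The only real obstacle is this disjointness step: one needs to notice that applying the cross-intersection hypothesis twice, once in each direction, supplies two distinct elements whose positions in $\pi$ are forced to contradict each other. The padding reduction and the probability computation are then routine, and the argument uses no ordering on $I$, which is appropriate since the hypothesis is stated for an unordered index set.
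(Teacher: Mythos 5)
Your proof is correct and is precisely Katona's random-permutation argument, which the paper itself does not reproduce but explicitly cites (reference [10]) as an elegant alternative proof of Theorem 8. The padding reduction, the computation $\Pr(E_i)=1/\binom{a+b}{a}$, and the disjointness argument via picking $w\in A_i\cap B_j$ and $z\in A_j\cap B_i$ with $w\neq z$ forced by $A_i\cap B_i=\emptyset$ are all sound, so nothing needs amending.
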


One can see that Theorem \ref{original} is tight by taking $(A_i,B_i)_{i \in I}$ to be the collection of all partitions of a set of size $a+b$ into subsets $A_i$ and $B_i$ of size $a$ and $b$, respectively. Moreover, this is the unique way of achieving equality. An interesting feature of this theorem is that the bound does not depend on the size of the ground set. This is ultimately the reason $c(r,t)$ and $f(r,t)$ are constants independent of $n$. \\

We now recall Theorem \ref{Skew}, which is a generalisation of Theorem \ref{original} allowing one to relax condition 3.

\begin{repth6}[Skew Two Families Theorem, Frankl, 1982]

Let $a,b \geq 0$ be integers and $(A_i,B_i)_{i \in I}$ be a sequence of pairs of finite sets, indexed by a finite, ordered set $I$, with the following properties.

\begin{enumerate}
    \item $|A_i| \leq a$ and $|B_i| \leq b$ for all $i \in I$.
    \item $A_i \cap B_i = \emptyset $ for all $i \in I$.
    \item $A_i \cap B_j \neq \emptyset $ for all $i < j \in I$.
\end{enumerate}

Then $|I| \leq \binom{a+b}{a}$.

\end{repth6}

In this version of the theorem there are many ways of achieving equality. There is no known combinatorial proof of the Skew Two Families Theorem - all known proofs use linear algebra in some form. One can deduce Theorem \ref{Skew} from the following vector space analogue (see Theorem 4.9 in \cite{Reference 23}).

\begin{theorem}[Vector Space Two Families Theorem, Lov\'asz, 1977]\label{Vector}

Let $a,b \geq 0$ be integers and $(A_i,B_i)_{i \in I}$ be a sequence of pairs of finite dimensional vector spaces over $F$, indexed by a finite, ordered set $I$, with the following properties.

\begin{enumerate}
    \item $dim(A_i) \leq a$ and $dim(B_i) \leq b$ for all $i \in I$.
    \item $A_i \cap B_i = 0 $ for all $i \in I$.
    \item $A_i \cap B_j \neq 0 $ for all $i < j \in I$.
\end{enumerate}

Then $|I| \leq \binom{a+b}{a}$.

\end{theorem}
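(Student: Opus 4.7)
The plan is to prove Theorem \ref{Vector} via the exterior algebra method due to Lov\'asz. The key idea is to encode each subspace as a decomposable vector in an appropriate exterior power, translating the intersection conditions 2 and 3 into nonvanishing or vanishing of wedge products, and then using a triangular linear independence argument.

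First I would reduce to the case where the common ambient space has dimension exactly $a+b$. Let $V$ be a finite dimensional vector space over $F$ containing all the $A_i$ and $B_i$, and let $N=\dim(V)$. By padding each $A_i$ and $B_i$ with sufficiently generic vectors from a larger ambient space, we may assume $\dim(A_i)=a$ and $\dim(B_i)=b$ for all $i$: condition 3 is only strengthened when the subspaces grow, and condition 2 survives generic padding since the original $A_i$ and $B_i$ had $A_i+B_i$ of dimension $a+b$, and genericity preserves this. Next, using that $F$ is infinite, I would choose a generic linear projection $\pi:V\to W$ onto a subspace $W$ of dimension $a+b$. For generic choice of $\ker\pi$, each of the finitely many subspaces $A_i$, $B_i$, $A_i+B_i$ (all of dimension at most $a+b$) intersects $\ker\pi$ only in $0$, and each nonzero $A_i\cap B_j$ (of dimension at most $\min(a,b)\leq a+b$) also avoids $\ker\pi$. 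Hence the images $\pi(A_i)$, $\pi(B_i)$ still satisfy conditions 1--3 inside $W$, so we may assume $\dim(V)=a+b$.

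Now I would set up the wedge vectors. For each $i\in I$, fix a basis of $A_i$ and let $\alpha_i\in\bigwedge^a V$ be the wedge of its basis vectors; similarly let $\beta_i\in\bigwedge^b V$. Each is nonzero and determined up to a nonzero scalar. The standard dictionary is that for subspaces $A,B$ of dimensions $a,b$, the wedge $\alpha\wedge\beta$ is nonzero if and only if $A\cap B=0$, since it measures the linear independence of $a+b$ basis vectors. Condition 2 therefore gives $\alpha_i\wedge\beta_i\neq 0$ in $\bigwedge^{a+b} V$, while condition 3 gives $\alpha_i\wedge\beta_j=0$ for $i<j$.

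The final step is the linear independence argument. Suppose $\sum_{i\in I} c_i\alpha_i=0$ in $\bigwedge^a V$ and let $j$ be the largest index with $c_j\neq 0$. Wedging both sides with $\beta_j$, the terms with $i<j$ vanish by condition 3, the terms with $i>j$ vanish as $c_i=0$, and one is left with $c_j(\alpha_j\wedge\beta_j)=0$, contradicting condition 2. Hence the $\alpha_i$ are linearly independent in $\bigwedge^a V$, which has dimension $\binom{a+b}{a}$ since $\dim(V)=a+b$, yielding $|I|\leq\binom{a+b}{a}$. The main obstacle is the generic projection step, which requires verifying that conditions 1--3 survive a single generic choice of $\ker\pi$ simultaneously for all indices; the infiniteness of $F$ is essential here, together with the observation that every relevant subspace (including each nonzero $A_i\cap B_j$) has dimension at most $a+b$. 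The remainder of the argument is standard exterior algebra bookkeeping.
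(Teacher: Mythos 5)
The paper does not prove Theorem~\ref{Vector}; it cites Lov\'asz and notes only that the proof ``uses exterior algebra.'' Your proposal correctly reconstructs the standard exterior algebra argument and is essentially the same as the cited proof: reduce to an ambient space of dimension $a+b$ by a generic projection, encode each $A_i$, $B_i$ as decomposable wedge vectors $\alpha_i\in\bigwedge^a V$, $\beta_i\in\bigwedge^b V$, observe that condition 2 gives $\alpha_i\wedge\beta_i\neq 0$ while condition 3 gives $\alpha_i\wedge\beta_j=0$ for $i<j$, and then run the triangular independence argument by pairing a putative dependence against $\beta_j$ for the largest surviving index. One small misstatement: you write that condition 2 survives padding ``since the original $A_i$ and $B_i$ had $A_i+B_i$ of dimension $a+b$,'' but the original sum need only have dimension $\dim A_i+\dim B_i\leq a+b$; the point is rather that generic padding in a sufficiently large ambient space \emph{produces} $\dim(A_i+B_i)=a+b$, which is what makes $A_i\cap B_i=0$ persist. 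With that phrasing corrected, the generic-projection step (requiring $F$ infinite) and the wedge bookkeeping are sound, and the conclusion $|I|\leq\dim\bigwedge^a V=\binom{a+b}{a}$ follows.
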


Theorem \ref{Vector} is proved using exterior algebra. We now explain how to deduce Theorem \ref{Skew} from Theorem \ref{Vector}, since the same construction will be useful for us later. Suppose $(A_i,B_i)_{i \in I}$ satisfies the conditions in Theorem \ref{Skew} and has ground set $S$. Let $V$ be a vector space over $F$ with a basis consisting of vectors $e_s$ indexed by $s \in S$. Let $A'_i = \text{span}(\{e_a : a \in A_i\})$ and $B'_i = \text{span}(\{e_b : b \in B_i\})$ for each $i \in I$. Then $(A'_i,B'_i)_{i \in I}$ satisfies the conditions in Theorem \ref{Vector}, so $|I| \leq \binom{a+b}{a}$. We will refer to this construction as \emph{the vector space construction}. \\

Theorem \ref{TF} is similar to the following generalisation of Theorem \ref{Skew} (see \cite{Reference 24}), but with the bound $|A_i| \leq a$ replaced by the weaker bound $|A_i \cap \cup_{k<i} \ A_k| \leq a$.

\begin{theorem}[Threshold Two Families Theorem, Füredi, 1984]\label{t}

Let $a,b \geq c \geq 0$ be integers and $(A_i,B_i)_{i \in I}$ be a sequence of pairs of finite sets, indexed by a finite, ordered set $I$, with the following properties.

\begin{enumerate}
    \item $|A_i| \leq a$ and $|B_i| \leq b$ for all $i \in I$.
    \item $|A_i \cap B_i| \leq c  $ for all $i \in I$.
    \item $|A_i \cap B_j| > c $ for all $i < j \in I$.
\end{enumerate}

Then $|I| \leq \binom{a+b-2c}{a-c}$.

\end{theorem}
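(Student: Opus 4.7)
The plan is to reduce Theorem \ref{t} to the Vector Space Two Families Theorem (Theorem \ref{Vector}) by passing to a generic codimension-$c$ linear slice. First I would apply the vector space construction described after Theorem \ref{Vector}: letting $V = F^S$ where $S$ is the ground set, set $A_i' = \mathrm{span}(\{e_a : a \in A_i\})$ and $B_i' = \mathrm{span}(\{e_b : b \in B_i\})$. Since $\dim(A_i' \cap B_j') = |A_i \cap B_j|$, the hypotheses of Theorem \ref{t} translate to $\dim A_i' \leq a$, $\dim B_i' \leq b$, $\dim(A_i' \cap B_i') \leq c$ and $\dim(A_i' \cap B_j') > c$ for $i < j$. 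If necessary, I would enlarge $V$ and extend each $A_i'$ and $B_i'$ by generic vectors so that $\dim A_i' = a$ and $\dim B_i' = b$ exactly; since a generic vector avoids any fixed proper subspace, none of the intersection dimensions are disturbed.

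Next I would choose a generic subspace $W \subseteq V$ of codimension $c$ and pass to the family $(A_i' \cap W, B_i' \cap W)_{i \in I}$. In general position, intersecting with $W$ lowers the dimension of a fixed subspace by $c$, truncated at $0$. Since $F$ is infinite and only finitely many Zariski-open conditions need to hold simultaneously, one $W$ can be chosen realising all of them at once. This gives $\dim(A_i' \cap W) = a - c$ and $\dim(B_i' \cap W) = b - c$. Moreover $\dim(A_i' \cap B_i' \cap W) = 0$, because $\dim(A_i' \cap B_i') \leq c$ and a generic codimension-$c$ subspace meets a fixed subspace of dimension at most $c$ only in $0$, while $\dim(A_i' \cap B_j' \cap W) \geq \dim(A_i' \cap B_j') - c > 0$ for $i < j$.

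The family $(A_i' \cap W, B_i' \cap W)_{i \in I}$ therefore satisfies the hypotheses of Theorem \ref{Vector} with $a$ and $b$ replaced by $a - c$ and $b - c$, and Theorem \ref{Vector} yields $|I| \leq \binom{(a-c)+(b-c)}{a-c} = \binom{a+b-2c}{a-c}$, as required. The main subtlety I expect is checking the generic-position dimension claims: for each fixed subspace $U$ of $V$, the locus of $W \in \mathrm{Gr}(\dim V - c, V)$ with $\dim(U \cap W) > \max\{0, \dim U - c\}$ is a proper Zariski-closed subvariety of the Grassmannian, and over the infinite field $F$ the finite union of the relevant bad loci (one per pair $(i,j)$) is still a proper subvariety, hence avoidable by some $W$.
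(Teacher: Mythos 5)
Your proof is correct and follows the same route the paper ascribes to Füredi: apply the vector space construction to translate to the setting of Theorem~\ref{Vector}, then intersect with a generic codimension-$c$ subspace (what the paper calls F\"uredi's reduction argument) to lower $a$, $b$ and the threshold simultaneously by $c$. The step where you first pad each $A_i'$, $B_i'$ up to dimensions exactly $a$, $b$ is unnecessary --- general position with $W$ already gives $\dim(A_i' \cap W) \le a-c$ from $\dim A_i' \le a$ --- but it is harmless.
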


Taking $c=0$ gives Theorem \ref{Skew}. One can see that Theorem \ref{t} is tight as follows. Let $(A_i,B_i)_{i \in I}$ be the collection of all partitions of a set $S$ of size $a+b-2c$ into subsets $A_i$ and $B_i$ of size $a-c$ and $b-c$, respectively. Order $I$ arbitrarily and let $T$ be a set of size $c$ disjoint from $S$. Now consider the the sequence $(A'_i,B'_i)_{i \in I}$, where $A'_i = A_i \cup T$ and $B'_i=B_i \cup T$ for each $i \in I$. \\

Füredi deduced Theorem \ref{t} from the following vector space analogue using the vector space construction (see \cite{Reference 24}).

\begin{theorem}[Vector Space Threshold Two Families Theorem, Füredi, 1984]\label{t Vector}

Let $a,b \geq c \geq  0$ be integers and $(A_i,B_i)_{i \in I}$ be a sequence of pairs of finite dimensional vector spaces over $F$, indexed by a finite, ordered set $I$, with the following properties.

\begin{enumerate}
    \item $dim(A_i) \leq a$ and $dim(B_i) \leq b$ for all $i \in I$.
    \item $dim(A_i \cap B_i) \leq c $ for all $i \in I$.
    \item $dim(A_i \cap B_j) > c $ for all $i < j \in I$.
\end{enumerate}

Then $|I| \leq \binom{a+b-2c}{a-c}$.

\end{theorem}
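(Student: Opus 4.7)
The plan is to deduce Theorem \ref{t Vector} from the $c = 0$ special case (Theorem \ref{Vector}) by restricting every space to a generic codimension-$c$ subspace $W$ of the finite-dimensional ambient vector space $V = \sum_{i \in I}(A_i + B_i)$. The threshold conditions ``$\dim(A_i \cap B_i) \leq c$'' and ``$\dim(A_i \cap B_j) > c$'' then translate, after slicing by $W$, to the plain disjointness/non-disjointness conditions of Theorem \ref{Vector} applied to the smaller spaces $A_i' = A_i \cap W$ and $B_i' = B_i \cap W$.

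First I would construct $W$. Take $W = \ker(\ell_1) \cap \cdots \cap \ker(\ell_c)$ for linear functionals $\ell_1, \ldots, \ell_c \in V^*$ chosen so that their coefficient tuple avoids a proper Zariski-closed subset of $(V^*)^c$. For each fixed subspace $X \leq V$, a generic such tuple satisfies
\begin{equation*}
\dim(X \cap W) = \max(\dim X - c, 0).
\end{equation*}
Since only finitely many subspaces $X$ are relevant (namely $A_i$, $B_i$, $A_i \cap B_i$, and $A_i \cap B_j$ for $i < j$) and $F$ is infinite, a single $W$ can be chosen that simultaneously realises this generic dimension formula for all of them.

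Next I would verify that $(A_i', B_i')_{i \in I}$ satisfies the hypotheses of Theorem \ref{Vector} with parameters $a - c$ and $b - c$. The bounds $\dim(A_i') \leq a - c$ and $\dim(B_i') \leq b - c$ follow from the displayed identity applied to $X = A_i$ and $X = B_i$. The identity $A_i' \cap B_i' = 0$ follows from the same identity and $\dim(A_i \cap B_i) \leq c$. For $i < j$, even without genericity, the basic inequality $\dim(X \cap W) \geq \dim X - c$ gives $\dim(A_i' \cap B_j') \geq \dim(A_i \cap B_j) - c \geq 1$, so $A_i' \cap B_j' \neq 0$. Applying Theorem \ref{Vector} then yields
\begin{equation*}
|I| \leq \binom{(a-c) + (b-c)}{a-c} = \binom{a+b-2c}{a-c}.
\end{equation*}

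The main obstacle is the simultaneous genericity of $W$. For each relevant $X$, the condition ``$\dim(X \cap W)$ attains its generic value $\max(\dim X - c, 0)$'' is Zariski-open in the parameters $(\ell_1, \ldots, \ell_c) \in (V^*)^c$ and nonempty, and I need all of these conditions to hold for one common choice of $W$. This is precisely where the hypothesis that $F$ is infinite enters: over an infinite field a finite intersection of nonempty Zariski-open subsets of affine space is nonempty, so the required uniformly generic $W$ exists. Over small finite fields the statement would need a different argument, but the infiniteness hypothesis makes this step essentially automatic, and everything else is linear-algebraic bookkeeping.
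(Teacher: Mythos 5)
Your proposal is correct and is essentially Füredi's reduction argument, which is exactly what the paper uses: pick a generic codimension-$c$ subspace $W$, intersect every $A_i$, $B_i$ with it, observe that the threshold conditions collapse to the $c=0$ conditions, and apply Theorem \ref{Vector} with parameters $a-c$, $b-c$. The only cosmetic difference is that you unpack the existence of the simultaneously generic $W$ via nonempty Zariski-open conditions in $(V^*)^c$, whereas the paper simply states the general-position fact as a black box; the verification steps (including the observation that the $i<j$ case needs only the unconditional lower bound $\dim(X\cap W)\geq\dim X-c$, not genericity) match the paper's reasoning.
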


Note that taking $c=0$ gives Theorem \ref{Vector}. Füredi proved Theorem \ref{t Vector} by reducing it to Theorem \ref{Vector}. Since the same argument will be useful later, we explain how this reduction works here. We first need a linear algebra fact. For any two subspaces $U$ and $W$ of a finite dimensional vector space $V$ we have $\text{dim}(U \cap W) \geq \max{(\text{dim}(U)+ \text{dim}(W) -\text{dim}(V),0)}$. We say that $U$ and $W$ are in \emph{general position} if we have equality. The fact we will use is the following. Suppose we have a \emph{finite} collection of subspaces of a finite dimensional vector space $V$ over an \emph{infinite} field. Then for every integer $0 \leq d \leq \text{dim}(V)$, there exists a subspace $W$ of $V$ with $\text{dim}(W) = d$ that is in general position with all subspaces in the collection. \\
 
Now suppose $(A_i,B_i)_{i \in I}$ satisfies the conditions in Theorem \ref{t Vector}. Let $V$ be an ambient vector space containing all the $A_i$ and $B_i$. We may assume that $V$ is finite dimensional and that $\text{dim}(V) \geq c$. Pick a subspace $W$ of $V$ with $\text{dim}(W) = \text{dim}(V) - c$ that is in general position with $A_i$, $B_i$ and $A_i \cap B_i$ for all $i \in I$. Let $A'_i = A_i \cap W$ and $B'_i = B_i \cap W$ for each $i \in I$.
Then $(A'_i,B'_i)_{i \in I}$ satisfies the conditions in Theorem \ref{Vector}, with $a$ and $b$ replaced by $a-c$ and $b-c$, respectively, so $|I| \leq \binom{a+b-2c}{a-c}$. We will refer to this argument as \emph{Füredi's reduction argument}.

\subsection{Definitions}\label{definitions}

In this section we introduce some definitions necessary for stating and proving our results. We first recall Definitions \ref{set} and \ref{mod set}.

\begin{repdef1}

For integers $a,b \geq 0$, an $(a,b)$ \emph{set system} is a sequence $(A_i,B_i)_{i \in I}$ of pairs of finite sets, indexed by a finite, ordered set $I$, with the following properties.

\begin{enumerate}
    \item  $|A_i| \leq a$ and $|B_i| \leq b$ for all $i \in I$.
    \item $A_i \cap B_i = \emptyset  $ for all $i \in I$.
    \item $A_i \cap B_j \neq \emptyset $ for all $i < j \in I$.
\end{enumerate}

\end{repdef1}

\begin{repdef2}

For integers $a,b,c \geq 0$, an $(a,b,c)$ \emph{modified set system} is a sequence $(A_i,B_i)_{i \in I}$ of pairs of finite sets, indexed by a finite, ordered set $I$, with the following properties.

\begin{enumerate}
    \item  $|A_i \cap \cup_{k<i} \ A_k| \leq a$ and $|B_i| \leq b$ for all $i \in I$.
    \item $|A_i \cap B_i| \leq c $ for all $i \in I$.
    \item $|A_i \cap B_j| > c $ for all $i < j \in I$.
\end{enumerate}

\end{repdef2}

We will also need the vector space analogues of these definitions.

\begin{definition}\label{4}

For integers $a,b \geq 0$, an $(a,b)$ \emph{vector space system} is a sequence $(A_i,B_i)_{i \in I}$ of pairs of finite dimensional vector spaces over $F$, indexed by a finite, ordered set $I$, with the following properties.

\begin{enumerate}
    \item $\text{dim}(A_i) \leq a$ and $\text{dim}(B_i) \leq b$ for all $i \in I$.
    \item $A_i \cap B_i = 0 $ for all $i \in I$.
    \item $A_i \cap B_j \neq 0 $ for all $i < j \in I$.
\end{enumerate}

\end{definition}

Note that these are the conditions in Theorem \ref{Vector}.

\begin{definition}\label{mod vec sys}

For integers $a,b,c \geq 0$, an $(a,b,c)$ \emph{modified vector space system} is a sequence $(A_i,B_i)_{i \in I}$ of pairs of finite dimensional vector spaces over $F$, indexed by a finite, ordered set $I$, with the following properties.

\begin{enumerate}
    \item $\text{dim}(A_i \cap \sum_{k < i} A_k) \leq a$ and $\text{dim}(B_i) \leq b$ for all $i \in I$.
    \item $\text{dim}(A_i \cap B_i) \leq c $ for all $i \in I$.
    \item $\text{dim}(A_i \cap B_j) > c $ for all $i < j \in I$.
\end{enumerate}

\end{definition}

We now name the maximum possible size of the index set in a modified set system.

\begin{definition}\label{5}

For all integers $a,b,c \geq 0$, let $i_s(a,b,c)$ be the maximum of $|I|$ over all $(a,b,c)$ modified set systems. 

\end{definition}

We will also need the vector space analogue of this definition.

\begin{definition}\label{6}

For all integers $a,b,c \geq 0$, let $i_v(a,b,c)$ be the maximum of $|I|$ over all $(a,b,c)$ modified vector space systems. 

\end{definition}

We will see in section \ref{results} that these maxima do exist, i.e. that $|I|$ is bounded for both $(a,b,c)$ modified set systems and $(a,b,c)$ modified vector space systems. \\

To prove our results, we will need to also consider the following maximum.

\begin{definition}\label{7}

For all integers $a,b \geq 0$, let $u_s(a,b)$ be the maximum of $|\cup_{i \in I} B_i|$ over all $(a,b)$ set systems. 

\end{definition}

We will also need the vector space analogue of this definition.

\begin{definition}\label{8}

For all integers $a,b \geq 0$, let $u_v(a,b)$ be the maximum of $ \text{dim}\left(\sum_{i \in I} B_i\right)$ over all $(a,b)$ vector space systems. 

\end{definition}

It is clear that these maxima do exist, i.e. that $|\cup_{i \in I} B_i|$ and $\text{dim}(\sum_{i \in I} B_i)$ are bounded, since Theorems \ref{Skew} and \ref{Vector} give $|I| \leq \binom{a+b}{a}$ and condition 1 in Definitions \ref{set} and \ref{4} gives $|B_i| \leq b$ and $\text{dim}(B_i) \leq b$ for $(a,b)$ set and vector space systems, respectively, so $u_s(a,b), u_v(a,b) \leq b \ \binom{a+b}{a} $. We will obtain better bounds, however. \\

We can now state our results.

\begin{theorem}\label{better estimates}

We have the following estimates for $i_s$, $i_v$, $u_s$ and $u_v$.

\begin{enumerate}

\item For all integers $a,b \geq c \geq 0$, 

$$ i_s(a,b,c), \ i_v(a,b,c) \ \asymp \ \binom{a+b-2c+1}{a-c+1} \ . $$ 

\item For all integers $a \geq 0$ and $b \geq 1$, 

$$ u_s(a,b), \ u_v(a,b) \ \asymp \ \binom{a+b+1}{a+1} \ . $$

\item For all integers $b,c \geq a \geq 0$,

$$ i_s(a,b,c) \  = \ i_v(a,b,c) \ = \ \left\lfloor \frac{b-a}{c-a+1} \right\rfloor + 1 \ . $$

\item For all integers $a \geq 0$ and $c \geq b \geq 0$, $i_s(a,b,c)=i_v(a,b,c)=1$. 

\item For all integers $a \geq 0$, $u_s(a,0)=u_v(a,0)=0$. 

\end{enumerate}

\end{theorem}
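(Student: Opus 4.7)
My plan is to handle the five parts in order of increasing difficulty, proving the vector-space versions first via linear algebra, transferring to the set versions via the vector space construction of Section \ref{context}, and supplying matching lower bounds by explicit constructions. Parts 4 and 5 will follow immediately from the definitions: for part 5 every $B_i$ is trivial when $b = 0$, and for part 4 any $i < j$ would force $|A_i \cap B_j| > c \geq b \geq |B_j|$. For part 3, I will fix $i_0 = \min I$ and observe that $((A_i)_{i > i_0}, B_{i_0})$ is an $(a, b, c)$ degenerate set system in the sense of Definition \ref{deg set system}, since restricting to $i > i_0$ can only shrink the unions $\cup_{k < i} A_k$; Lemma \ref{Degenerate Cases} and its vector-space analogue then give $|I| \leq \lfloor (b-a)/(c-a+1) \rfloor + 1$. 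The matching construction will take $A_1 = \{1, \ldots, c+1\}$ and, for $2 \leq i \leq \lfloor (b-a)/(c-a+1) \rfloor + 1$, $A_i = \{1, \ldots, a\} \cup \{a + (i-2)(c-a+1) + 1, \ldots, a + (i-1)(c-a+1)\}$, with each $B_j$ chosen to contain $\bigcup_{k < j} A_k$ and padded up to size $b$ by fresh elements disjoint from the new portion of $A_j$.

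For parts 1 and 2, I will first reduce the general $(a, b, c)$ case to the $c = 0$ case via a modified-system version of Füredi's reduction argument: intersecting each $A_i$ and $B_i$ with a generic codimension-$c$ subspace $W$ yields an $(a - c, b - c, 0)$ modified vector space system with the same $|I|$, since $(A_i \cap W) \cap \sum_{k < i}(A_k \cap W) \subseteq (A_i \cap \sum_{k < i} A_k) \cap W$, which has dimension at most $a - c$ for generic $W$. The lower-bound constructions for both parts will come from a single explicit family indexed by the $(a+1)$-subsets $T$ of $[a+b+1]$: take $A_T = T \setminus \{\max T\}$ and $B_T = \big(([a+b+1] \setminus T) \cup \{z_T\}\big) \setminus \{y_T\}$, where $z_T$ is a private element used only in $B_T$ and $y_T$ is chosen (depending on $T$) to preserve condition 3 under the lexicographic ordering; for part 1 with $c > 0$, attach a shared core of $c$ extra elements to every $A_T$ and $B_T$.

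The main obstacle will be the upper bound $i_v(a, b, 0) \ll \binom{a+b+1}{a+1}$. The classical wedge-product proof of Theorem \ref{Vector} associates to each $i$ an element $\alpha_i = u_{i,1} \wedge \cdots \wedge u_{i,a} \in \bigwedge^{a} V$ built from a basis of $A_i$, and requires $\dim A_i \leq a$, which the modified hypothesis does not supply. My plan is to choose, for each $i$, an $(a+1)$-dimensional subspace $\hat A_i \subseteq A_i$ containing $A_i \cap \sum_{k < i} A_k$ such that $\hat A_i \cap B_j \neq 0$ for every $j > i$; once this is achieved, $(\hat A_i, B_i)_{i \in I}$ is an ordinary $(a+1, b)$ vector-space system and Theorem \ref{Vector} gives $|I| \leq \binom{a+b+1}{a+1}$. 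Constructing $\hat A_i$ is the technical heart of the argument: the single ``extra'' direction in $A_i$ beyond $A_i \cap \sum_{k < i} A_k$ must simultaneously capture all intersections $A_i \cap B_j$ for $j > i$, and establishing that the modified hypothesis leaves room for such a choice (most plausibly via a generic selection combined with an iterative construction in decreasing $i$) will be the main technical challenge. The bound $u_v(a, b) \ll \binom{a+b+1}{a+1}$ should follow from an analogous augmentation that extends an $(a, b)$ vector-space system by $D = \dim \sum_i B_i$ auxiliary pairs indexed by a basis of $\sum_i B_i$, reducing the dimension bound to a counting bound on the size of the augmented index set.
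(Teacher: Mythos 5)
The central step in your plan for the upper bound in parts 1 and 2 --- replacing each $A_i$ by an $(a{+}1)$-dimensional subspace $\hat A_i \subseteq A_i$ containing $A_i \cap \sum_{k<i} A_k$ and still meeting every $B_j$ for $j>i$ --- does not go through, and the gap is not repairable by a more clever choice. Consider the $(0,2,0)$ modified vector space system on $V=\mathrm{span}(e_1,e_2,e_3)$ with $A_1=\mathrm{span}(e_1,e_2)$, $A_2=\mathrm{span}(e_3)$, $A_3=0$ and $B_1=0$, $B_2=\mathrm{span}(e_1)$, $B_3=\mathrm{span}(e_2,e_3)$. All conditions of Definition \ref{mod vec sys} hold, but a line $\hat A_1 \subseteq A_1$ meeting $B_2$ must equal $\mathrm{span}(e_1)$ while a line in $A_1$ meeting $B_3$ must equal $\mathrm{span}(e_2)$, so no admissible $\hat A_1$ exists. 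The difficulty you flag as the "main technical challenge" is thus a genuine obstruction, not a detail: a single extra direction beyond $A_i \cap \sum_{k<i} A_k$ cannot simultaneously capture several intersections $A_i\cap B_j$ that live in independent directions. The paper's route is structurally different: it never shrinks or augments the $A_i$ directly, but instead (Lemma \ref{f2<f4}) extracts a maximal subset $J\subseteq I$ on which the replacement $A_i \mapsto A_i\cap\sum_{k<i}A_k$ still satisfies condition 3, bounds $|J|$ by Theorem \ref{Vector}, and bounds $|I\setminus J|$ by a dimension-increment argument inside $\sum_{k\in J}B_k$. That bound is then closed via the auxiliary quantity $u_v$ and the reverse inequality of Lemma \ref{f4<f2}, through a joint induction on $a+b$ that you would need to set up; your proposal treats $u_v$ only as an afterthought for part 2, but in the paper it is the engine driving the $i_v$ bound as well. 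Note also that this machinery only yields $i_v(a,b,0)\le 2\binom{a+b+1}{a+1}$, consistent with the $\asymp$ claim, whereas your approach aims at the sharper $\binom{a+b+1}{a+1}$, which is not established in the paper and need not hold.

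There are two smaller slips in part 3. In the upper bound you take $i_0=\min I$ and invoke Lemma \ref{Degenerate Cases} with $B=B_{i_0}$, but the modified-system condition 3 gives $|A_i\cap B_j|>c$ only for $i<j$, so the hypothesis $|A_i\cap B|>c$ for $i\in I\setminus\{i_0\}$ requires $i_0$ to be the \emph{last} element of $I$, not the first (this is exactly what the paper does). In the lower-bound construction, your formula gives $A_1=\{1,\dots,c+1\}$ and, for $i=2$, $A_2=\{1,\dots,a\}\cup\{a+1,\dots,c+1\}=A_1$, after which condition 2 for index 2 contradicts condition 3 for the pair $(1,2)$. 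A cleaner route, which the paper follows, is to apply the $c\mapsto c+d$ shift of Lemma \ref{tred} to reduce to $a=0$, where disjoint blocks of size $c+1$ with $B_i=\bigcup_{k\neq i}A_k$ work directly.

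Parts 4 and 5 and the shift reduction from $(a,b,c)$ to $(a-c,b-c,0)$ are fine and match the paper. Your lower-bound family for part 1 indexed by $(a{+}1)$-subsets of $[a+b+1]$ uses sets $A_T$ of size $a$, so it never exploits the weaker hypothesis $|A_i\cap\bigcup_{k<i}A_k|\le a$ and at best reproduces the bound $\binom{a+b}{a}$ from Theorem \ref{Skew}, not the larger $\binom{a+b+1}{a+1}$; the paper instead gets the correct order via Lemma \ref{Product Argument}, nesting a $(0,d,0)$ construction inside an $(a,b-d)$ set system.
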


\newtheorem*{repth7}{Theorem \ref{better estimates}}

Note that Theorem \ref{better estimates} implies Theorem \ref{estimates} and moreover that Theorem \ref{estimates} is tight.

\subsection{Proofs}\label{results}

In this section we prove Theorem \ref{better estimates} and Lemma \ref{Degenerate Cases}. We first prove some preliminary lemmas. Using the vector space construction, one obtains the following lemma.

\begin{lemma}\label{f1<f2}

For all integers $a,b,c \geq 0$, $i_s(a,b,c) \leq i_v(a,b,c)$. For all integers $a,b \geq 0$, $u_s(a,b) \leq u_v(a,b)$.

\end{lemma}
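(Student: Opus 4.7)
The plan is to use the vector space construction described in section \ref{context} (the one used by Füredi to deduce Theorem \ref{Skew} from Theorem \ref{Vector}). Given an $(a,b,c)$ modified set system $(A_i,B_i)_{i \in I}$ with ground set $S$, I would take a vector space $V$ over $F$ with basis $\{e_s : s \in S\}$ and define $A'_i = \text{span}(\{e_a : a \in A_i\})$ and $B'_i = \text{span}(\{e_b : b \in B_i\})$ for each $i \in I$. Then I need to check that $(A'_i,B'_i)_{i \in I}$ is an $(a,b,c)$ modified vector space system with the same index set.

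The key observation is that, because $\{e_s : s \in S\}$ is a basis, for any subsets $T_1, T_2 \subseteq S$ we have $\text{span}(\{e_s : s \in T_1\}) \cap \text{span}(\{e_s : s \in T_2\}) = \text{span}(\{e_s : s \in T_1 \cap T_2\})$ and $\text{span}(\{e_s : s \in T_1\}) + \text{span}(\{e_s : s \in T_2\}) = \text{span}(\{e_s : s \in T_1 \cup T_2\})$, with dimensions equal to the cardinalities of the corresponding sets. Applying this observation to each of the three conditions in Definition \ref{mod set} translates them directly into the three conditions in Definition \ref{mod vec sys}: in particular $\dim(A'_i \cap \sum_{k<i} A'_k) = |A_i \cap \cup_{k<i} A_k|$ and $\dim(A'_i \cap B'_j) = |A_i \cap B_j|$ for all relevant $i,j$. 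Taking $(A_i,B_i)_{i \in I}$ to achieve the maximum in Definition \ref{5} yields $i_s(a,b,c) \leq i_v(a,b,c)$.

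For the second inequality, I would apply the same construction to an $(a,b)$ set system $(A_i,B_i)_{i \in I}$ achieving the maximum in Definition \ref{7}. The same basis observation shows that $(A'_i,B'_i)_{i \in I}$ satisfies the conditions in Definition \ref{4} and that $\dim(\sum_{i \in I} B'_i) = |\cup_{i \in I} B_i|$, giving $u_s(a,b) \leq u_v(a,b)$.

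There is no real obstacle here — the entire proof is a routine verification that the vector space construction preserves cardinalities, intersections and unions, and hence carries every combinatorial condition over to its linear-algebraic analogue. The only mild subtlety is noting that the construction works uniformly for both the modified set system statement (where one must also track $A_i \cap \cup_{k<i} A_k$) and the set system statement (where one must track $\cup_{i \in I} B_i$), but both follow from the same elementary fact about spans of subsets of a basis.
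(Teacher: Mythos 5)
Your proposal is exactly the paper's argument: the paper states that Lemma \ref{f1<f2} follows from the vector space construction, and your verification that spans of subsets of a fixed basis preserve cardinalities under intersection and sum is precisely the routine check that is left implicit. Correct and same approach.
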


By Lemma \ref{f1<f2}, to prove parts 1, 2 and 3 of Theorem \ref{better estimates}, it will be sufficient to prove lower bounds for $i_s(a,b,c)$ and $u_s(a,b)$ and upper bounds for $i_v(a,b,c)$ and $u_v(a,b)$. \\

Recall that we obtained an extremal construction for Theorem \ref{t} from the extremal construction for Theorem \ref{original} by adjoining $c$ new elements to all the sets. Similarly, adjoining $d$ new elements to all the sets gives $i_s(a+d,b+d,c+d) \geq i_s(a,b,c)$ and the vector space analogue of this argument gives $i_v(a+d,b+d,c+d) \geq i_v(a,b,c)$. Moreover, Füredi's reduction argument shows that the second inequality is in fact an equality.  

\begin{lemma}\label{tred}

For all integers $a,b,c,d \geq 0$, $i_s(a+d,b+d,c+d) \geq i_s(a,b,c)$ and $i_v(a+d,b+d,c+d) = i_v(a,b,c)$.

\end{lemma}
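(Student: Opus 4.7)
My plan is to handle the lemma in three parts: the two $\geq$ directions by the same additive construction (adjoining $d$ new elements or dimensions to every member of an extremal system) and the remaining vector space $\leq$ direction by F\"uredi's reduction argument, as reviewed just before Definition \ref{4}.

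First I would establish $i_s(a+d,b+d,c+d) \geq i_s(a,b,c)$. Take an extremal $(a,b,c)$ modified set system $(A_i, B_i)_{i \in I}$ with ground set $S$ and fix a set $T$ of size $d$ disjoint from $S$. Setting $A_i' = A_i \cup T$ and $B_i' = B_i \cup T$, each of the four cardinalities appearing in Definition \ref{mod set} increases by exactly $d$, so $(A_i', B_i')$ is an $(a+d, b+d, c+d)$ modified set system of the same size. The vector space lower bound $i_v(a+d,b+d,c+d) \geq i_v(a,b,c)$ is proved identically: take an extremal $(a,b,c)$ modified vector space system inside an ambient $V$, replace $V$ by $V \oplus T$ with $\dim T = d$, and set $A_i' = A_i \oplus T$, $B_i' = B_i \oplus T$. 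The direct sum structure gives $A_i' \cap \sum_{k<i} A_k' = (A_i \cap \sum_{k<i} A_k) \oplus T$ and $A_i' \cap B_j' = (A_i \cap B_j) \oplus T$, so all three defining conditions scale in the expected way.

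For the reverse inequality $i_v(a+d,b+d,c+d) \leq i_v(a,b,c)$, let $(A_i, B_i)_{i \in I}$ be an extremal $(a+d,b+d,c+d)$ modified vector space system sitting inside a finite dimensional $V$. Since $F$ is infinite, I would choose a subspace $W \subseteq V$ of codimension $d$ in general position with the finite collection of subspaces $B_i$, $A_i \cap B_i$, $A_i \cap B_j$ and $A_i \cap \sum_{k<i} A_k$ for all $i, j \in I$. Set $A_i' = A_i \cap W$ and $B_i' = B_i \cap W$. Using $\dim(U \cap W) = \max(\dim U - d,\, 0)$ for each $U$ in the collection, conditions 2 and 3 of Definition \ref{mod vec sys} for the new system follow at once: $\dim B_i' \leq b$, $\dim(A_i' \cap B_i') \leq c$, and $\dim(A_i' \cap B_j') = \dim(A_i \cap B_j) - d > c$ whenever $i < j$.

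The main obstacle, and the step I would treat most carefully, is verifying the first half of condition 1, namely $\dim(A_i' \cap \sum_{k<i} A_k') \leq a$. The subtlety is that $\sum_{k<i} A_k' \neq W \cap \sum_{k<i} A_k$ in general; only the inclusion $\sum_{k<i} A_k' \subseteq W \cap \sum_{k<i} A_k$ holds. Fortunately this one-sided inclusion suffices, giving
$$ A_i' \cap \sum_{k<i} A_k' \ \subseteq\ W \cap \Bigl(A_i \cap \sum_{k<i} A_k\Bigr), $$
whose dimension is $\max(\dim(A_i \cap \sum_{k<i} A_k) - d,\, 0) \leq a$ by general position. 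The absence of any set-theoretic analogue of general position is presumably the reason the lemma only asserts $\geq$ (and not equality) in the set system case.
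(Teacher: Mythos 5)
Your proposal is correct and matches the paper's proof essentially verbatim: adjoining a disjoint set or complementary subspace of size/dimension $d$ for the lower bounds, and F\"uredi's general-position reduction for the upper bound on $i_v$, with the same careful observation that $\sum_{k<i}A_k' \subseteq W \cap \sum_{k<i}A_k$ is one-sided but sufficient. The only (harmless) deviation is that you additionally impose general position with the $A_i\cap B_j$ for $i<j$, which is not needed since condition 3 is a lower bound and $\dim(U\cap W)\geq \dim U-d$ holds unconditionally when $W$ has codimension $d$.
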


\begin{proof}

Let $(A_i,B_i)_{i \in I}$ be an $(a,b,c)$ modified set or vector space system with ground set $S$ or ambient vector space $V$, respectively. Let $T$ or $W$ be a set or vector space over $F$ of size or dimension $d$ disjoint from $S$ or intersecting $V$ in the zero subspace, respectively. For each $i \in I$, let $A'_i$ be $A_i \cup T$ or $A_i+W$ and $B'_i$ be $B_i \cup T$ or $B_i+W$, respectively. Then $(A'_i,B'_i)_{i \in I}$ is an $(a+d,b+d,c+d)$ modified set or vector space system, respectively, so $i_s(a+d,b+d,c+d) \geq i_s(a,b,c)$ and $i_v(a+d,b+d,c+d) \geq i_v(a,b,c)$.  \\

It remains to show that $i_v(a+d,b+d,c+d) \leq i_v(a,b,c)$. Let $(A_i,B_i)_{i \in I}$ be an $(a+d,b+d,c+d)$ modified vector space system with ambient vector space $V$. We may assume that $V$ is finite dimensional and that $\text{dim}(V) \geq d$. Pick a subspace $W$ of $V$ with $\text{dim}(W) = \text{dim}(V) - d$ that is in general position with $A_i \cap \sum_{k < i} A_k$, $B_i$ and $A_i \cap B_i$ for all $i \in I$. Let $A'_i = A_i \cap W$ and $B'_i = B_i \cap W$ for each $i \in I$. Then $(A'_i,B'_i)_{i \in I}$ is an $(a,b,c)$ modified vector space system, so the inequality follows. 

\end{proof}

Lemma \ref{tred} will allow us to only consider $i_s(a,b,0)$ and $i_v(a,b,0)$ when proving part 1 of Theorem \ref{better estimates} and $i_s(0,b,c)$ and $i_v(0,b,c)$ when proving part 3 of Theorem \ref{better estimates}. \\

We now prove two lemmas. The first, Lemma \ref{f2<f4}, gives an upper bound for $i_v(a,b,0)$ in terms of $u_v(a,b)$. The second, Lemma \ref{f4<f2}, gives an upper bound for $u_v(a,b)$ in terms of the values of $i_v(c,a,0)$ for integers $0 \leq c \leq b-1$. Combining these two will allow us to prove upper bounds for both $i_v(a,b,0)$ and $u_v(a,b)$ by induction. This is the reason for introducing $u_v(a,b)$ in the first place. \\

The main idea in the proof of Lemma \ref{f2<f4} is that, given an $(a,b,0)$ modified vector space system, we would like to replace each $A_i$ by $A_i \cap \sum_{k<i} A_k$, since condition 1 in Definition \ref{mod vec sys} gives an upper bound for $\text{dim}\left(A_i \cap \sum_{k<i} A_k\right)$ rather than $\text{dim}(A_i)$, and then apply Theorem \ref{Vector}. However, this makes the $A_i$ smaller, so while condition 2 in Definition \ref{mod vec sys} will still hold, condition 3 might not. The solution is to find a subset $J \subseteq I$ in which condition 3 does still hold that is maximal with respect to this property in a certain way. We can then bound $|J|$ using Theorem \ref{Vector} and $|I \setminus J|$ using the maximality of $J$.

\begin{lemma}\label{f2<f4}

For all integers $a,b \geq 0$, $i_v(a,b,0) \leq \binom{a+b}{a} + u_v(a,b)$.

\end{lemma}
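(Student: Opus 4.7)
The plan is to follow the outline sketched just before the lemma. Let $(A_i, B_i)_{i \in I}$ be an $(a,b,0)$ modified vector space system and set $A'_i = A_i \cap \sum_{k<i} A_k$ for each $i \in I$. By condition 1 of Definition \ref{mod vec sys}, $\dim(A'_i) \leq a$, and trivially $A'_i \cap B_i \subseteq A_i \cap B_i = 0$, so the sequence $(A'_i, B_i)_{i \in I}$ satisfies conditions 1 and 2 of Definition \ref{4}; only condition 3 may fail upon the passage from $A_i$ to $A'_i$.

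The first step is to extract a subset $J \subseteq I$ on which condition 3 does hold, by a greedy procedure: traverse $I$ in its given order and insert the current element $j$ into $J$ precisely when $A'_i \cap B_j \neq 0$ for every $i$ already placed in $J$. With the induced order, $(A'_i, B_i)_{i \in J}$ then satisfies all three conditions of Definition \ref{4}, so Theorem \ref{Vector} yields the clean bound $|J| \leq \binom{a+b}{a}$.

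The remaining task, which is the crux of the proof, is to show $|I \setminus J| \leq u_v(a,b)$. For each $j \in I \setminus J$, the greedy rule furnishes a witness $i(j) \in J$ with $i(j) < j$ and $A'_{i(j)} \cap B_j = 0$, while condition 3 of the original modified system still gives $A_{i(j)} \cap B_j \neq 0$. The strategy is to organise the pairs $(A'_{i(j)}, B_j)$ (possibly after a more careful choice of the witness $i(j)$, or after enlarging $A'_{i(j)}$ via a generic-position argument exploiting the infinite-field hypothesis) into an $(a,b)$ vector space system indexed by $I \setminus J$ whose $B$-spaces sum to dimension at least $|I \setminus J|$; the bound will then follow directly from Definition \ref{8}.

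The main obstacle is verifying condition 3 of Definition \ref{4} for this would-be system on $I \setminus J$: for $j_1 < j_2$ in $I \setminus J$ one needs $A'_{i(j_1)} \cap B_{j_2} \neq 0$, which does not come for free from the greedy construction of $J$, since the condition $A_{i(j_1)} \cap B_{j_2} \neq 0$ guaranteed by the original system lies inside $A_{i(j_1)}$ rather than $A'_{i(j_1)}$. Resolving this likely forces a refinement of the witness map $j \mapsto i(j)$ (for instance, taking the latest element of $J$ whose $A'$ remains compatible with all later $B_{j'}$) together with a separate argument that each $j \in I \setminus J$ contributes a genuinely new direction to $\sum_{j \in I \setminus J} B_j$, upgrading the dimension bound to a cardinality bound. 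Once these two points are settled, combining $|J| \leq \binom{a+b}{a}$ with $|I \setminus J| \leq u_v(a,b)$ gives $i_v(a,b,0) \leq \binom{a+b}{a} + u_v(a,b)$, as required.
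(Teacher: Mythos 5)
The first half of your argument, bounding $|J|$, is close to the paper's but with a subtle and consequential reversal: you build $J$ by a forward greedy pass, so that the witness for $j \notin J$ is an \emph{earlier} element $i(j)\in J$. The paper instead builds $J$ \emph{in the dual order} (scanning from the last element of $I$ to the first), so that for each $i \in I\setminus J$ there is a \emph{later} witness $j\in J$ with $i<j$ and $A_i\cap\left(\sum_{k<i}A_k\right)\cap B_j = 0$. This direction matters, because condition 3 of Definition \ref{mod vec sys} asserts $\dim(A_i\cap B_j)>0$ precisely for $i<j$: with the paper's orientation, the discarded index $i$ is the $A$-side and the witness in $J$ is the $B$-side, which is the pairing the modified system actually gives you information about. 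Either orientation yields $|J|\le\binom{a+b}{a}$ via Theorem \ref{Vector}, but only the dual-order construction sets up the second half cleanly.

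Your plan for the second half, bounding $|I\setminus J|$, is where the proposal genuinely breaks down, as you partly acknowledge. You try to assemble a \emph{new} $(a,b)$ vector space system indexed by $I\setminus J$ and control $\dim\left(\sum_{j\in I\setminus J}B_j\right)$. Two things go wrong. First, as you observe, condition 3 of Definition \ref{4} for the system $(A'_{i(j)},B_j)_{j\in I\setminus J}$ is not available: for $j_1<j_2$ you would need $A'_{i(j_1)}\cap B_{j_2}\neq 0$, but the modified-system hypothesis only gives $A_{i(j_1)}\cap B_{j_2}\neq 0$, and a generic-position trick cannot manufacture an intersection inside the smaller space $A'_{i(j_1)}$. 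Second, even if that system existed, what you would get from Definition \ref{8} is $\dim\left(\sum_{j\in I\setminus J}B_j\right)\le u_v(a,b)$, and you would still need $|I\setminus J|\le\dim\left(\sum_{j\in I\setminus J}B_j\right)$, which is a separate claim you do not establish. So the proposal is a plan with two unfilled holes, not a proof.

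The paper avoids both issues by never building a second vector space system at all. From the \emph{same} system $(A'_i,B_i)_{i\in J}$ it reads off both $|J|\le\binom{a+b}{a}$ and, by the very definition of $u_v$, $\dim\left(\sum_{k\in J}B_k\right)\le u_v(a,b)$. It then bounds $|I\setminus J|$ by a telescoping dimension count inside $\sum_{k\in J}B_k$: for each $i\in I\setminus J$ with later witness $j\in J$, condition 3 of the modified system supplies a nonzero $v\in A_i\cap B_j$, and the defining property $A_i\cap\left(\sum_{k<i}A_k\right)\cap B_j=0$ ensures $v\notin\sum_{k<i}A_k\supseteq\sum_{k\in I\setminus J,\,k<i}A_k$. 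Hence the nested chain of subspaces $\left(\sum_{k\in I\setminus J,\,k\le i}A_k\right)\cap\left(\sum_{k\in J}B_k\right)$ strictly increases at every $i\in I\setminus J$, giving $|I\setminus J|\le\dim\left(\sum_{k\in J}B_k\right)\le u_v(a,b)$. To repair your write-up you would need to (i) reverse the greedy scan to the dual order so that witnesses lie ahead of the discarded indices, and (ii) replace the attempt to build a system on $I\setminus J$ with this telescoping argument inside $\sum_{k\in J}B_k$.
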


\begin{proof}

Let $(A_i,B_i)_{i \in I}$ be an $(a,b,0)$ modified vector space system. Define a subset $J \subseteq I $ recursively (in the dual order) as follows. For every $i \in I$, $i \in J$ if and only if $ A_i \cap \left(\sum_{k < i } A_k \right) \cap B_j \neq 0  $ for all $j \in J$ with $i < j$. The set $J$ has the following two properties:

\begin{enumerate}
    \item For all $i < j \in J$, $A_i \cap \left(\sum_{k < i } A_k \right) \cap B_j \neq 0\ $. 
    \item For all $i \in I \setminus J$, there is some $j \in J$ with $i < j$ and $A_i \cap \left( \sum_{k < i } A_k \right) \cap B_j = 0 \ $. 
    
\end{enumerate}

(Indeed, $J$ is the unique subset of $I$ with these properties.) Let $A'_i = A_i \cap \sum_{k < i} A_k $ for each $i \in J$. Order $J$ using the order induced by $I$. Then by property 1, $(A'_i,B_i)_{i \in J}$ is an $(a,b)$ vector space system, so $|J| \leq \binom{a+b}{a}$ by Theorem \ref{Vector} and $\text{dim}\left(\sum_{k \in J} B_k\right) \leq u_v(a,b)$ by definition. \\

For all $i \in I \setminus J$, we clearly have

$$ \text{dim}\left[\left(\sum_{\substack{k \in I \setminus J \\  k \leq i}} A_k \right) \cap \left( \sum_{k \in J} B_k \right) \right] \geq \text{dim}\left[\left(\sum_{\substack{k \in I \setminus J \\  k < i}} A_k \right) \cap \left( \sum_{k \in J} B_k \right) \right] \ . $$

We now show that this inequality is strict. By property 2, there is some $j \in J$ with $i < j$ and $A_i \cap \left(\sum_{k < i } A_k\right) \cap B_j = 0 $. By condition 3 in Definition \ref{mod vec sys}, $A_i \cap B_j \neq 0 $, so there is some $v \in A_i \cap B_j$ with $ v \neq 0$. Then the inequality is strict because $v$ is in the vector space on the left hand side but not in the vector space on the right hand side. We thus have \\

$$ u_v(a,b) \geq \text{dim}\left(\sum_{k \in J} B_k \right) \geq \text{dim}\left[\left(\sum_{i \in I \setminus J} A_i \right) \cap \left( \sum_{k \in J} B_k \right) \right] $$

$$ = \sum_{i \in I \setminus J} \left( \text{dim}\left[\left(\sum_{\substack{k \in I \setminus J \\  k \leq i}} A_k \right) \cap \left( \sum_{k \in J} B_k \right) \right] - \text{dim}\left[\left(\sum_{\substack{k \in I \setminus J \\  k < i}} A_k \right) \cap \left( \sum_{k \in J} B_k \right) \right] \right) \geq |I \setminus J| \ . $$

Hence $|I| = |J| + |I \setminus J| \leq \binom{a+b}{a} + u_v(a,b)$.

\end{proof}

Recall that $u_v(a,b) \leq b \ \binom{a+b}{a}$, since for all $(a,b)$ vector space systems we have $|I| \leq \binom{a+b}{a}$ by Theorem \ref{Vector} and $\text{dim}(B_i) \leq b$ for all $i \in I$ by condition 1 in Definition \ref{4}, so trivially $\text{dim}\left(\sum_{i \in I} B_i \right) \leq b \ \binom{a+b}{a}$. Lemma \ref{f4<f2} gives a better bound in terms of the values of $i_v(c,a,0)$ for integers $0 \leq c \leq b-1$. The main idea in the proof of Lemma \ref{f4<f2} is that for the trivial bound to be close to tight, the sum $\sum_{i \in I} B_i$ would have to be close to a direct sum. But then the $B_i \cap \sum_{k > i} B_k$ would have to have small dimension, which allows us to obtain a better bound for $|I|$ in terms of the values of $i_v(c,a,0)$ for integers $0 \leq c \leq b-1$.

\begin{lemma}\label{f4<f2}
For all integers $a,b \geq 0$,

$$ u_v(a,b) \leq \sum_{c=0}^{b-1} \ \min{\left\{i_v(c,a,0), \binom{a+b}{a} \right\}} \ . $$

\end{lemma}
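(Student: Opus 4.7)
The plan is to take an $(a,b)$ vector space system $(A_i,B_i)_{i\in I}$ achieving $u_v(a,b)=\dim\bigl(\sum_{i\in I}B_i\bigr)$, decompose this dimension as a sum over $c\in\{0,\ldots,b-1\}$ counting a family of index subsets $J_c\subseteq I$, and bound each $|J_c|$ by $\min\{i_v(c,a,0),\binom{a+b}{a}\}$ via a single swap-and-reverse reduction.

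First, for each $i\in I$ set $c_i:=\dim\bigl(B_i\cap\sum_{k>i}B_k\bigr)$. Building up $\sum_{i\in I}B_i$ in reverse order and applying $\dim(U+V)=\dim U+\dim V-\dim(U\cap V)$ at each step gives
$$ \dim\Bigl(\sum_{i\in I}B_i\Bigr)=\sum_{i\in I}\bigl(\dim B_i-c_i\bigr). $$
Writing each summand as $\dim B_i-c_i=|\{c\in\mathbb{Z}:c_i\le c<\dim B_i\}|$ and swapping the order of summation yields
$$ u_v(a,b)=\sum_{c=0}^{b-1}|J_c|,\qquad\text{where }J_c:=\{i\in I:c_i\le c<\dim B_i\}. $$

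Next, for each fixed $c$ I would bound $|J_c|$ in two ways. The estimate $|J_c|\le\binom{a+b}{a}$ is immediate by applying Theorem \ref{Vector} to the original $(a,b)$ vector space system, since $J_c\subseteq I$. For the sharper bound $|J_c|\le i_v(c,a,0)$, I exhibit a $(c,a,0)$ modified vector space system on $J_c$ by swapping the roles of $A$ and $B$ and reversing the order induced by $I$: set $A'_i:=B_i$ and $B'_i:=A_i$ for $i\in J_c$, and take the new order $<'$ to be the reverse of $<_I$ restricted to $J_c$. The conditions of Definition \ref{mod vec sys} then read $\dim B'_i=\dim A_i\le a$, $A'_i\cap B'_i=B_i\cap A_i=0$, and the cross-intersection condition $A'_i\cap B'_j\ne0$ for $i<'j$ translates to $B_i\cap A_j\ne0$ whenever $j<_Ii$, all of which follow immediately from the original system; the increment condition becomes
$$ A'_i\cap\sum_{\substack{k\in J_c\\k>_I i}}A'_k=B_i\cap\sum_{\substack{k\in J_c\\k>_I i}}B_k\subseteq B_i\cap\sum_{k>_I i}B_k, $$
which has dimension $c_i\le c$ by the definition of $J_c$.

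The only genuinely clever point — and the part where I expect the main obstacle — is precisely the swap-and-reverse construction in Step 2, which converts "$B_i$ adds few new dimensions beyond the later $B_k$'s" (i.e.\ small $c_i$) into "$A'_i$ has small intersection with the earlier $A'_k$'s" in the new ordering, thereby matching the first condition of Definition \ref{mod vec sys} with the parameter $c$. Once this is in place, combining the two bounds on $|J_c|$ with the dimension identity from Step 1 gives $u_v(a,b)\le\sum_{c=0}^{b-1}\min\{i_v(c,a,0),\binom{a+b}{a}\}$ at once.
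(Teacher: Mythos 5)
Your proof is correct and follows essentially the same route as the paper: the telescoping identity for $\dim(\sum_i B_i)$, the layer decomposition by double counting, and the swap-and-reverse reduction of each layer to a $(c,a,0)$ modified vector space system bounded via $i_v$ and via Theorem \ref{Vector}. The only (cosmetic) difference is the indexing of the layers --- you define $J_c=\{i : c_i\le c<\dim B_i\}$ for $0\le c\le b-1$, whereas the paper uses $I_c=\{i:\dim B_i - c_i\ge c\}$ for $1\le c\le b$ and gets a $(b-c,a,0)$ system --- but both partitions give the same double count and the same final bound.
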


\begin{proof}

Let $(A_i,B_i)_{i \in I}$ be an $(a,b)$ vector space system. By Theorem \ref{Vector}, $|I| \leq \binom{a+b}{a}$. For each integer $1 \leq c \leq b$, let $I_c = \{ i \in I : \ \text{dim}(B_i) - \text{dim}\left(B_i \cap \sum_{k>i} B_k \right)  \geq c \}$. Then by double counting the number of pairs $(i,c)$, where $i \in I$ and $c$ is an integer, with $1 \leq c \leq \text{dim}(B_i) - \text{dim}\left(B_i \cap \sum_{k > i} B_k \right) $, we obtain

$$ \text{dim}\left(\sum_{i \in I} B_i\right) = \ \sum_{i \in I} \left[ \text{dim}\left(\sum_{k \geq i} B_k \right) - \text{dim} \left(\sum_{k > i} B_k \right) \right] = \ \sum_{i \in I} \left[  \text{dim}(B_i) - \text{dim}\left(B_i \cap \sum_{k > i} B_k \right)  \right] = \ \sum_{c=1}^b |I_c| \ .$$

For each integer $1 \leq c \leq b$, we can bound $|I_c|$ in two different ways. On the one hand, $|I_c| \leq |I| \leq \binom{a+b}{a} $. On the other hand, for all $i \in I_c$, $ \text{dim}\left(B_i \cap \sum_{k > i} B_k \right) \leq b-c$. Order $I_c$ using the order induced by the dual order on $I$. Then $(B_i,A_i)_{i \in I_c}$ is a $(b-c,a,0)$ modified vector space system, so $|I_c| \leq i_v(b-c,a,0)$ by definition. Hence $|I_c| \leq \min{\left\{i_v(b-c,a,0),\binom{a+b}{a} \right\}}$, so

$$ \text{dim}\left(\sum_{i \in I} B_i\right) = \ \sum_{c=1}^b |I_c| \ \leq \ \sum_{c=1}^b \min{\left\{i_v(b-c,a,0),\binom{a+b}{a} \right\}} \ = \ \sum_{c=0}^{b-1} \ \min{\left\{i_v(c,a,0), \binom{a+b}{a} \right\}} \ . $$

\end{proof}

At this point we have all the lemmas we need to prove the upper bounds in parts 1 and 2 of Theorem \ref{better estimates}. To prove the lower bounds, we will need the next two lemmas. The first, Lemma \ref{Product Argument}, gives a lower bound for $i_s(a+c,b+d,0)$ and $i_v(a+c,b+d,0)$ in terms of $i_s(c,d,0)$ and $i_v(c,d,0)$, respectively. We will only need the inequality for $i_s$ and when $c=0$, but proving the general case is not more difficult. 

\begin{lemma}\label{Product Argument}

For all integers $a,b,c,d \geq 0$, $i_s(a+c,b+d,0) \geq \binom{a+b}{a} i_s(c,d,0)$ and $i_v(a+c,b+d,0) \geq \binom{a+b}{a} i_v(c,d,0)$.

\end{lemma}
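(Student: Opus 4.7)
The plan is a product construction combining an extremal example for the Skew Two Families Theorem with any given $(c,d,0)$ modified system. I would begin with the classical extremal $(a,b)$ set system realising $\binom{a+b}{a}$ in Theorem~\ref{Skew}: take a ground set $S$ of size $a+b$ and let $(A_i, B_i)_{i \in I}$ enumerate all partitions $S = A_i \sqcup B_i$ with $|A_i| = a$ and $|B_i| = b$, in any order. Separately, fix a $(c,d,0)$ modified set system $(C_j, D_j)_{j \in J}$ attaining $|J| = i_s(c,d,0)$ on a ground set $T$. The key device is to use a \emph{fresh, disjoint} copy $T^{(i)}$ of $T$ for each $i \in I$, with $C_j^{(i)}, D_j^{(i)} \subseteq T^{(i)}$ the corresponding images. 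On $I \times J$, ordered lexicographically with $I$ outer and $J$ inner, set
$$ A'_{(i,j)} = A_i \cup C_j^{(i)}, \qquad B'_{(i,j)} = B_i \cup D_j^{(i)}. $$

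Next I would verify that $(A'_{(i,j)}, B'_{(i,j)})_{(i,j) \in I \times J}$ is an $(a+c, b+d, 0)$ modified set system. The size bound $|B'_{(i,j)}| \leq b+d$ and the disjointness $A'_{(i,j)} \cap B'_{(i,j)} = \emptyset$ follow at once by additivity over the disjoint ambients $S$ and $T^{(i)}$ together with the corresponding properties of the two factor systems. For the cross-condition $A'_{(i,j)} \cap B'_{(i',j')} \neq \emptyset$ when $(i,j) < (i',j')$, split into two cases: if $i < i'$ then $A_i \cap B_{i'} \neq \emptyset$ in $S$ by the outer partition system, while if $i = i'$ and $j < j'$ the common copy $T^{(i)}$ supplies $C_j^{(i)} \cap D_{j'}^{(i)} \neq \emptyset$. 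The pairwise disjointness of the $T^{(i)}$ is what prevents any accidental interaction between copies.

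The condition requiring the most care is $|A'_{(i,j)} \cap \bigcup_{(k,l) < (i,j)} A'_{(k,l)}| \leq a+c$, and it is what dictates the choice of order. Because $S$ and the $T^{(i)}$ are pairwise disjoint, the intersection splits cleanly as
$$ \bigl(A_i \cap \bigcup_{(k,l) < (i,j)} A_k \bigr) \ \cup \ \bigl(C_j^{(i)} \cap \bigcup_{\substack{(k,l) < (i,j) \\ k = i}} C_l^{(i)} \bigr). $$
The first piece has size $\leq |A_i| = a$ trivially. For the second, the predecessors with $k = i$ are exactly the pairs $(i, l)$ with $l < j$, so this piece equals $C_j^{(i)} \cap \bigcup_{l < j} C_l^{(i)}$, which has size $\leq c$ by condition 1 of the inner $(c,d,0)$ modified system. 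Adding yields $\leq a+c$. Since $|I \times J| = \binom{a+b}{a} \cdot i_s(c,d,0)$, the set-system inequality is proved.

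For the vector-space inequality I would rerun the identical construction in the vector-space setting: use the extremal $(a,b)$ vector-space system of size $\binom{a+b}{a}$ obtained by applying the vector space construction of Section~\ref{context} to the partition example, take each $T^{(i)}$ to be a fresh copy of an ambient space for $(C_j, D_j)_{j \in J}$ placed in direct sum with the rest, and replace every disjoint union by a sum of subspaces and every cardinality by a dimension. Each verification above carries over verbatim. No step presents a genuine technical obstacle; the only real subtlety, shared by both halves, is insisting on the lexicographic order with $I$ outer, so that ``same-$i$, smaller-$j$'' predecessors invoke the inner modified condition while ``smaller-$i$'' predecessors are automatically confined to a different copy of the inner ground set or ambient space.
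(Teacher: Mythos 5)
Your construction is essentially identical to the paper's: the paper also forms a lexicographic product of an outer $(a,b)$ system with disjoint copies of an inner $(c,d,0)$ modified system, ordered by the sum of orders (i.e.\ outer index first), and verifies the $|A'_{(i,j)} \cap \bigcup_{(k,l)<(i,j)} A'_{(k,l)}| \le a+c$ bound exactly by splitting the intersection across the disjoint ground sets as you do. The only cosmetic difference is that the paper states the construction for arbitrary factor systems and lets the extremal choices be implicit, whereas you plug in the extremal partition example and an extremal inner system from the start.
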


\begin{proof}

Let $(A_i,B_i)_{i \in I_\ast}$ be an $(a,b)$ set or vector space system with ground set $S_\ast$ or ambient vector space $V_\ast$ and for each $i \in I_\ast$, let $(A_j^i,B_j^i)_{j \in I_i}$ be a $(c,d,0)$ modified set or vector space system with ground set $S_i$ or ambient vector space $V_i$, respectively. Without loss of generality, we may assume that the $I_i$ are disjoint and that the union $\cup_{i \in I \cup \{\ast\}} \ S_i$ is a disjoint union or that the sum $\sum_{i \in I \cup \{\ast\}} \ V_i$ is a direct sum, respectively. \\

Let $J= \cup_{i \in I_\ast} \ I_i$, ordered with the sum of the orders on the $I_i$. For each $i \in I_\ast$ and $j \in I_i$, let $A'_j$ be $A_i \cup A^i_j$ or $A_i + A^i_j$ and $B'_j$ be $B_i \cup B^i_j$ or $B_i + B^i_j$, respectively. Then $(A'_i,B'_i)_{i \in J}$ is an $(a+c,b+d,0)$ modified set or vector space system, respectively, so the inequalities follow.  

\end{proof}

Given an $(a,b)$ set or vector space system $(A_i,B_i)_{i \in I}$, the second lemma, Lemma \ref{f3>}, gives a lower bound for $u_s(a+c,b+d)$ or $u_v(a+c,b+d)$ in terms of $u_s(c,d)$ or $u_v(c,d)$, $|I|$ and $|\cup_{i \in I} B_i|$ or $\text{dim}\left(\sum_{i \in I} B_i\right)$, respectively. We will only need the inequality for $u_s$ and when $c=0$ or $d=1$, but proving the general case is not more difficult. The proof is similar to that of Lemma \ref{Product Argument}.

\begin{lemma}\label{f3>}

Let $a,b,c,d \geq 0$ be integers and $(A_i,B_i)_{i \in I}$ be an $(a,b)$ set or vector space system. Then $u_s(a+c,b+d) \geq |I|u_s(c,d) + |\cup_{i \in I} B_i|$ or  $u_v(a+c,b+d) \geq |I|u_v(c,d) + \text{dim}\left(\sum_{i \in I} B_i\right)$, respectively.

\end{lemma}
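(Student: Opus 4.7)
The plan is to mirror the product construction used in the proof of Lemma \ref{Product Argument}, but to track not only the index set $J$ of the combined system but also the union (respectively sum) of its $B$-components. Concretely, for each $i \in I$ I would take an extremal $(c,d)$ set (resp.\ vector space) system $(A_j^i,B_j^i)_{j\in I_i}$ achieving $|\cup_{j\in I_i} B_j^i|=u_s(c,d)$ (resp.\ $\dim\left(\sum_{j\in I_i} B_j^i\right)=u_v(c,d)$), chosen with ground sets $S_i$ pairwise disjoint from each other and from the ground set $S_\ast$ of $(A_i,B_i)_{i\in I}$ (resp.\ ambient spaces $V_i$ that together with $V_\ast$ form an internal direct sum). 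Then I would form $J=\cup_{i\in I} I_i$, ordered by the sum of the orders on the $I_i$ along $I$, and set
\[
A'_j = A_i \cup A_j^i,\qquad B'_j = B_i \cup B_j^i \qquad (j\in I_i),
\]
or the analogous sum in the vector space setting.

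The main verification is that $(A'_j,B'_j)_{j\in J}$ is an $(a+c,b+d)$ set (resp.\ vector space) system. Conditions 1 and 2 of Definition \ref{set} (resp.\ Definition \ref{4}) follow immediately from the corresponding conditions for the original systems and the disjointness/direct-sum assumptions. For condition 3, if $j\in I_i$ and $j'\in I_{i'}$ with $i<i'$ then $A'_j\cap B'_{j'}\supseteq A_i\cap B_{i'}\neq\emptyset$ (resp.\ $\neq 0$) by condition 3 on $(A_i,B_i)_{i\in I}$, while if $j<j'$ both lie in the same $I_i$ then $A'_j\cap B'_{j'}\supseteq A_j^i\cap B_{j'}^i\neq\emptyset$ (resp.\ $\neq 0$) by condition 3 on the inner system.

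Once the system is set up, the inequality is essentially a bookkeeping computation. By the disjointness (resp.\ directness) of the ground sets,
\[
\bigcup_{j\in J} B'_j \;=\; \bigcup_{i\in I} B_i \;\sqcup\; \bigsqcup_{i\in I}\bigcup_{j\in I_i} B_j^i,
\]
so taking cardinalities gives $\bigl|\cup_{j\in J} B'_j\bigr| = \bigl|\cup_{i\in I} B_i\bigr| + |I|\,u_s(c,d)$, and by definition of $u_s$ this bounds $u_s(a+c,b+d)$ from below. The vector space case is identical, replacing unions by sums and cardinalities by dimensions, and using that the $V_i$ are independent to turn the sum-of-dimensions into the dimension-of-sum.

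There is no real obstacle here; the only point requiring a hint of care is ensuring the auxiliary ground sets (or ambient spaces) are taken independent of each other and of the original one, which is harmless since in the vector space case we may always enlarge the ambient space and we are working over an infinite field. The lemma then follows.
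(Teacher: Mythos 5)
Your proposal is correct and follows essentially the same route as the paper's proof, which likewise takes the product construction from the proof of Lemma \ref{Product Argument}, plugs in extremal $(c,d)$ systems for each $i \in I$, and reads off the size of the union (resp.\ dimension of the sum) of the $B'$-parts using the disjointness/direct-sum assumption. Your verification of conditions 1--3 and the cardinality bookkeeping are the same steps the paper compresses into ``Define and order $J$ as before \dots\ so the inequalities follow.''
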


\begin{proof}

Let $I_\ast=I$ and let $(A_i,B_i)_{i \in I_\ast}$ have ground set $S_\ast$ or ambient vector space $V_\ast$, respectively. For each $i \in I_\ast$, let $(A_j^i,B_j^i)_{j \in I_i}$ be a $(c,d)$ set or vector space system with ground set $S_i$ or ambient vector space $V_i$, respectively. As before, we may assume without loss of generality that the $I_i$ are disjoint and that the union $\cup_{i \in I \cup \{\ast\}} \ S_i$ is a disjoint union or that the sum $\sum_{i \in I \cup \{\ast\}} \ V_i$ is a direct sum, respectively. Define and order $J$ as before and for each $i \in J$, define $A'_i$ and $B'_i$ as before. Then $(A'_i,B'_i)_{i \in J}$ is an $(a+c,b+d)$ set or vector space system, respectively, so the inequalities follow.

\end{proof}

We now prove the vector space analogue of Lemma \ref{Degenerate Cases}.

\begin{lemma}\label{deg vec}

Let $b,c \geq a \geq 0$ be integers and $((A_i)_{i \in I}, B)$ be a pair, where $(A_i)_{i \in I}$ is a sequence of finite dimensional vector spaces over $F$, indexed by a finite, ordered set $I$, and $B$ is a finite dimensional vector space over $F$, with the following properties. 

\begin{enumerate}
    \item $\text{dim}(B) \leq b$.
    \item $\text{dim}(A_i \cap B) > c $ for all $i \in I$.
    \item $\text{dim}\left(A_i \cap \sum_{k<i} A_k \right) \leq a $ for all $i \in I$.
\end{enumerate}

Then $|I| \leq \left\lfloor \frac{b-a}{c-a+1} \right\rfloor$.

\end{lemma}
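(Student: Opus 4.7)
The plan is to restrict attention to the intersections $W_i := A_i \cap B$ and bound the dimension of $\sum_{i \in I} W_i$ from both sides. The upper bound is immediate: since each $W_i \subseteq B$, we have $\dim\bigl(\sum_{i \in I} W_i\bigr) \leq \dim(B) \leq b$. So all the work goes into the lower bound.

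The idea for the lower bound is to expand $\dim\bigl(\sum_{i \in I} W_i\bigr)$ as a telescoping sum
$$ \dim\Bigl(\sum_{i \in I} W_i\Bigr) = \sum_{i \in I} \left[\dim\Bigl(\sum_{k \leq i} W_k\Bigr) - \dim\Bigl(\sum_{k < i} W_k\Bigr)\right] = \sum_{i \in I} \left[\dim(W_i) - \dim\Bigl(W_i \cap \sum_{k<i} W_k\Bigr)\right] $$
and show each summand is at least $c-a+1$. Condition 2 gives $\dim(W_i) \geq c+1$. For the other piece, since $W_k \subseteq A_k$ we have $W_i \cap \sum_{k<i} W_k \subseteq A_i \cap \sum_{k<i} A_k$, so condition 3 yields $\dim\bigl(W_i \cap \sum_{k<i} W_k\bigr) \leq a$. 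Subtracting gives the claimed lower bound $c-a+1$ on each summand.

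To squeeze out the extra $-a$ in the numerator, I would treat the first element of $I$ separately. If $i_0$ is the first element of $I$ then $\sum_{k<i_0} W_k = 0$, so the first summand is at least $c+1$ rather than just $c-a+1$. Combining, $b \geq (c+1) + (|I|-1)(c-a+1)$, which rearranges to $|I| \leq (b-a)/(c-a+1)$, and since $|I|$ is an integer we get $|I| \leq \lfloor (b-a)/(c-a+1) \rfloor$ as required (the case $|I|=0$ being trivial).

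There is no real obstacle: the hypothesis $c \geq a$ ensures $c-a+1 \geq 1$ so the lower bound on each increment is positive, and the containment $W_k \subseteq A_k$ transfers the chain condition from the $A_i$ to the $W_i$ for free. Lemma \ref{Degenerate Cases} then follows from Lemma \ref{deg vec} via the vector space construction, as already done in Section \ref{context} for other Two Families type results.
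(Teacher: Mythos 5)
Your proof is correct and follows essentially the same path as the paper's: telescope $\dim\bigl(\sum_{i\in I}(A_i\cap B)\bigr)$, bound each increment below by $c-a+1$ using conditions~2 and~3 (and the containment of the partial sums of the $A_i\cap B$ in those of the $A_i$), and gain the extra $-a$ by noting the first increment is at least $c+1$. The only cosmetic difference is that you name the intersections $W_i:=A_i\cap B$; the paper works with the same quantities unlabelled.
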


\begin{proof}

If $I$ is empty, the bound holds, so suppose otherwise. Then by condition 1,

$$ b \geq \text{dim}(B)\geq \text{dim}\left(\sum_{i \in I} (A_i \cap B) \right) = \sum_{i \in I} \left[ \text{dim}\left(\sum_{k \leq i} (A_k \cap B) \right) - \text{dim}\left(\sum_{k < i} (A_k \cap B) \right) \right] $$

$$ = \sum_{i \in I} \left[ \text{dim}\left(A_i \cap B\right) - \text{dim}\left(A_i \cap \sum_{k < i} (A_k \cap B) \right) \right] \geq \sum_{i \in I}  \left[ \text{dim}\left(A_i \cap B\right) - \text{dim}\left(A_i \cap \sum_{k < i} A_k \right) \right] . $$

But by conditions 2 and 3,

\begin{equation}\label{e999}
\text{dim}\left(A_i \cap B\right) - \text{dim}\left(A_i \cap \sum_{k < i} A_k \right) \geq c-a+1
\end{equation}

for all $i \in I$. Moreover, the left hand side of (\ref{e999}) is at least $c+1$ for the first element of $I$, so $b \geq (c-a+1)|I|+a$ and the bound follows.

\end{proof}

\begin{rem}

Alternatively, one can use Füredi's reduction argument to reduce Lemma \ref{deg vec} to the special case $a=0$, which is easier to prove.

\end{rem}

Lemma \ref{Degenerate Cases} follows from Lemma \ref{deg vec} using the vector space construction. (Alternatively, one can prove Lemma \ref{Degenerate Cases} directly using the set analogue of the proof of Lemma \ref{deg vec}.) \\

We are now ready to prove Theorem \ref{better estimates}. 

\begin{repth7}

We have the following estimates for $i_s$, $i_v$, $u_s$ and $u_v$.

\begin{enumerate}

\item For all integers $a,b \geq c \geq 0$, 

$$ i_s(a,b,c), \ i_v(a,b,c) \ \asymp \ \binom{a+b-2c+1}{a-c+1} \ . $$ 

\item For all integers $a \geq 0$ and $b \geq 1$, 

$$ u_s(a,b), \ u_v(a,b) \ \asymp \ \binom{a+b+1}{a+1} \ . $$

\item For all integers $b,c \geq a \geq 0$,

$$ i_s(a,b,c) \ = \ i_v(a,b,c) \ = \ \left\lfloor \frac{b-a}{c-a+1} \right\rfloor + 1 \ . $$

\item For all integers $a \geq 0$ and $c \geq b \geq 0$, $i_s(a,b,c)=i_v(a,b,c)=1$. 

\item For all integers $a \geq 0$, $u_s(a,0)=u_v(a,0)=0$. 

\end{enumerate}

\end{repth7}

\begin{proof}

To prove part 1 it suffices to show $i_s(a,b,c) \gg \binom{a+b-2c+1}{a-c+1}$ and $i_v(a,b,c) \ll \binom{a+b-2c+1}{a-c+1}$ by Lemma \ref{f1<f2}. By Lemma \ref{tred}, it suffices to prove the special case $c=0$, which states that $i_s(a,b,0) \gg \binom{a+b+1}{a+1}$ and $i_v(a,b,0) \ll \binom{a+b+1}{a+1}$ for  all integers $a,b \geq 0$. Let us first prove the lower  bound. By Lemma \ref{Product Argument} with $c=0$ and part 3 with $a=0=c$, we have $i_s(a,b,0) \geq \max_{0 \leq c \leq b} \ (c+1) \binom{a+b-c}{a} \asymp \binom{a+b+1}{a+1}$. The maximum is attained at $c = \left \lceil \frac{b-a}{a+1} \right \rceil$. \\

Let us now prove the upper bound. We will show that $i_v(a,b,0) \leq 2 \binom{a+b+1}{a+1}$ and $u_v(a,b) \leq 2 \binom{a+b+1}{a+1}-\binom{a+b}{a}$ for  all integers $a,b \geq 0$ by induction on $a+b$. By induction, we may assume $i_v(c,a,0) \leq 2 \binom{a+c+1}{a}$ for all integers $0 \leq c \leq b-1$. Then

$$ u_v(a,b) \leq \sum_{c=0}^{b-1} \min{\left\{2 \binom{a+c+1}{a},\binom{a+b}{a} \right\}} \leq 2 \sum_{c=0}^b \binom{a+c}{a} - \binom{a+b}{a} = 2 \binom{a+b+1}{a+1} - \binom{a+b}{a} \  $$

by Lemma \ref{f4<f2} and $i_v(a,b,0) \leq 2 \binom{a+b+1}{a+1}$ by Lemma \ref{f2<f4}. \\

To prove part 2 it suffices to show $u_s(a,b) \gg \binom{a+b+1}{a+1}$ and $u_v(a,b) \ll\binom{a+b+1}{a+1}$ by Lemma \ref{f1<f2}. We have already proved the upper bound, so let us prove the lower bound. By Lemma \ref{f3>}, $u_s(a,b) \geq |I|u_s(c,d)+|\cup_{i \in I} B_i| \geq |I|u_s(c,d)$ for all integers $0 \leq c \leq a$ and $0 \leq d \leq b$ and all $(a-c,b-d)$ set systems $(A_i,B_i)_{i \in I}$, so $u_s(a,b) \geq \binom{a+b-c-d}{a-c} u_s(c,d)$ for all  integers $0 \leq c \leq a$ and $0 \leq d \leq b$. \\

Taking $c=0$ and using the fact that $u_s(0,d)=d$ for all integers $d \geq 0$, we obtain $u_s(a,b) \geq \max_{0 \leq d \leq b} \ d \binom{a+b-d}{a} \asymp \binom{a+b}{a+1}$. The maximum is attained at $d = \left \lceil \frac{b}{a+1} \right \rceil$. Taking $d=1$ and using the fact that $u_s(c,1)=c+1$ for all integers $c \geq 0$, we obtain $u_s(a,b) \geq \max_{0 \leq c \leq a} \ (c+1) \binom{a+b-1-c}{b-1} \asymp \binom{a+b}{a}$. The maximum is attained at $c = \left \lceil \frac{a-b+1}{b} \right \rceil$. Combining the two bounds, we obtain $u_s(a,b) \gg \binom{a+b}{a+1} +  \binom{a+b}{a} = \binom{a+b+1}{a+1}$.   \\

To prove part 3 it suffices to show $i_s(a,b,c) \geq  \left\lfloor \frac{b-a}{c-a+1} \right\rfloor + 1 $ and $i_v(a,b,c) \leq \left \lfloor \frac{b-a}{c-a+1} \right \rfloor + 1$ by Lemma \ref{f1<f2}. Let us first prove the lower bound. By Lemma \ref{tred}, it suffices to prove the special case $a=0$, which states that $i_s(0,b,c) \geq  \left\lfloor \frac{b}{c+1} \right\rfloor + 1 $ for all integers $b,c \geq 0$. Let $(A_i)_{i \in I}$ be a sequence of disjoint sets of size $c+1$, indexed by an ordered set $I$ with $|I|=\left \lfloor \frac{b}{c+1} \right \rfloor + 1$. For each $i \in I$, let $B_i = \cup_{k \neq i} \ A_k$. Then it is easy to check that $(A_i,B_i)_{i \in I}$ is a $(0,b,c)$ modified set system. \\

We now prove the upper bound. Let $(A_i,B_i)_{i \in I}$ be an $(a,b,c)$ modified vector space system. If $I$ is empty, the bound holds, so suppose otherwise. Let $l$ be the last element of $I$. Order $I \setminus \{l\}$ using the order induced by $I$. Then $((A_i)_{i \in I \setminus \{l\}},B_l)$ satisfies the conditions in Lemma \ref{deg vec}, so the bound follows. \\

We now prove part 4. By condition 1 in Definition \ref{mod set} or \ref{mod vec sys}, we have $|A_i \cap B_j| \leq |B_j| \leq b \leq c$ or $\text{dim}(A_i \cap B_j) \leq \text{dim}(B_j) \leq b \leq c$ for all $ i < j \in I$ for all $(a,b,c)$ modified set or vector space systems, respectively, so condition 3 in  Definition \ref{mod set} or \ref{mod vec sys} forces $|I| \leq 1$. One can trivially construct $(a,b,c)$ modified set and vector space systems with $|I|=1$, so $i_s(a,b,c)=1=i_v(a,b,c)$. Part 5 is trivial.

\end{proof}

\begin{rem}

Using the set analogues of the proofs of Lemmas \ref{f2<f4} and \ref{f4<f2}, one can prove the set analogues of these lemmas, namely $i_s(a,b,0) \leq \binom{a+b}{a} + u_s(a,b)$ and $u_s(a,b) \leq \sum_{c=0}^{b-1} \ \min{\{i_s(c,a,0),\binom{a+b}{a}\}} $. One can then show $i_s(a,b,0), u_s(a,b) \ll \binom{a+b+1}{a+1}$ by induction just as in the proof of Theorem \ref{better estimates}. However, Lemma \ref{tred} gives only $i_s(a+d,b+d,c+d) \geq i_s(a,b,c)$ as opposed to $i_v(a+d,b+d,c+d) = i_v(a,b,c)$, since there is no set analogue of Füredi's reduction argument. Because of this, we cannot deduce an upper bound for $i_s(a,b,c)$ from the upper bound for $i_s(a,b,0)$ as we did for $i_v$. This is the reason for introducing all the vector space analogues in the first place and then using Lemma \ref{f1<f2} to deduce their set analogues. \\

One could instead generalise Definition \ref{set} to that of an $(a,b,c)$ set system by replacing conditions 2 and 3 by conditions 2 and 3 in Definition \ref{mod set} and then define $u_s(a,b,c)$ as in Definition \ref{7}. Then the set analogues of the proofs of Lemmas \ref{f2<f4} and \ref{f4<f2} can be generalised to prove that $i_s(a,b,c) \leq \binom{a+b-2c}{a-c} + u_s(a,b,c)$ and $u_s(a,b,c) \leq \sum_{d=0}^{b-1} \ \min{\{i_s(d,a,c),\binom{a+b-2c}{a-c}\}} $ for $a,b \geq c$. \\

One can then deduce upper bounds for $i_s(a,b,c)$ and $u_s(a,b,c)$ by induction. The problem with this approach is that, even when proving the upper bounds for $a,b \geq c$, the degenerate cases arise in the induction, leading to inferior bounds. Indeed, this must be the case, since we trivially have $u_s(a,b,c) \geq b$ by taking $|I|=1$, so the upper bounds obtained must increase not only with $a-c$ and $b-c$, but also with $c$.

\end{rem}

\begin{rem}

We needed $F$ to be infinite for Füredi's reduction argument. If one is only interested in $i_v$ and $u_v$ as means to prove the estimates for $i_s$ and $u_s$, one can simply take $F$ to be infinite. However, if one is interested in $i_v$ and $u_v$ in their own right, one might ask whether our estimates still hold when $F$ is finite. This is indeed the case and can be deduced from the case when $F$ is infinite, as follows. \\

Note that Lemma \ref{f1<f2} still holds when $F$ is finite, so our lower bounds for $i_v$ and $u_v$ still hold. Let $F'$ be an infinite field containing $F$. There is an algebraic technique known as extension of scalars which, given a vector space $V$ over $F$, constructs an associated vector space $V'$ over $F'$. It is easy to check that for any $(a,b,c)$ modified vector space system $(A_i,B_i)_{i \in I}$ over $F$, $(A'_i,B'_i)_{i \in I}$ is an $(a,b,c)$ modified vector space system over $F'$ and that for any $(a,b)$ vector space system $(A_i,B_i)_{i \in I}$ over $F$, $(A'_i,B'_i)_{i \in I}$ is an $(a,b)$ vector space system over $F'$ with $\text{dim}\left(\sum_{i \in I} B'_i\right)=\text{dim}\left(\sum_{i \in I} B_i\right)$. Hence the upper bounds for $i_v$ and $u_v$ over $F$ follow from those over $F'$.

\end{rem}

\section{Known values of $c(r,t)$}\label{known}

In this section we first explain how, in principle, given $r$ and $t$, one can determine $c(r,t)$ by a finite computation. We then discuss the known values of $c(r,t)$. Finally, we complete the work of Duffus and Hanson in \cite{Reference 3} on the case $r=3=t$ by determining $C(3,3)$. \\

One can extract explicit lower and upper bounds for $c(r,t)$ and $c'(r,t)$ from the proof of Theorem \ref{c est}. In section \ref{bu}, we saw that $|G| \leq c'(r,t+1)+k$ for all $G \in C'(r,t,k)$, so one can obtain an explicit upper bound for $|G|$ for all $G \in C'(r,t,k)$. One can also extract an explicit upper bound for $|G|$ for all $G \in C(r,t,k)$ from the proof of Theorem \ref{blow-up}. Hence, in principle, given $r$, $t$ and $k$, one can determine $C(r,t,k)$ and $C'(r,t,k)$ by a finite computation. Note that $-c(r,t)$ and $-c'(r,t)$ are the smallest values of $k$ for which $C(r,t,k)$ and $C(r,t,k) \cup C'(r,t,k)$ are not empty, respectively. Hence, in principle, given $r$ and $t$, one can determine $c(r,t)$ and $c'(r,t)$ by a finite computation. In practice, however, even for small values of $r$ and $t$ (and $k$), these computations are unfeasible. \\

For $t \leq r$, the value of $c(r,t)$ is known. To the best of the author's knowledge, no value of $c(r,t)$ is known with $t>r$. We first consider the case $t=r-2$. Recall (see section \ref{intro}) that for all integers $r \geq 2$ and $n \geq r-2$, $sat(n,K_r)=(r-2)n-\binom{r-1}{2}$ and that the unique extremal graph consists of a $K_{r-2}$ fully connected to an independent set of size $n-(r-2)$. Note that (for $n \geq r-1$) this graph has minimum degree $r-2$ and can be obtained from $K_{r-1}$ by blowing up a vertex by $n-(r-2)$. Hence $c(r,r-2)=\binom{r-1}{2}=c'(r,r-2)$, $C(r,r-2)=\{K_{r-1}\}$ and $C'(r,r-2)=\emptyset$ for all integers $r \geq 3$. So the answer to Question \ref{q} (whether $c'(r,t)=c(r,t)$) is ``Yes." when $t=r-2$. \\

Next, we consider the case $t=r-1$. It is easy to show that all $K_3$-saturated graphs $G$ with $\delta(G)=2$ are blow-ups of either the complete bipartite graph $K_{2,2}$ or the cycle $C_5$. Recall (see section \ref{sec con}) that every $K_r$-saturated graph $G$ with $\delta(G)<2(r-2)$ has a conical vertex. In particular, every $K_r$-saturated graph $G$ with $\delta(G)=r-1$ has a conical vertex for all integers $r \geq 4$. Hence, all $K_r$-saturated graphs $G$ with $\delta(G)=r-1$ are blow-ups of either $K_{2,2}^{r-3}$ or $C_5^{r-3}$. So $c(r,r-1)=\binom{r}{2}+2$ and $C(r,r-1)=\{C_5^{r-3}\}$ for all integers $r \geq 3$. \\

Finally, we consider the case $t=r$. Duffus and Hanson showed that $c(3,3)=15$ (Theorem  4 in \cite{Reference 3}) and came close to determining $C(3,3)$. Let $P$ be the Petersen graph (we will give a precise description of $P$ later). Duffus and Hanson noted that $P$ is $K_3$-saturated and $\delta(P)=3$. We have $|P|=10$ and $e(P)=15$. Hence $c(3,3) \geq 15$. Duffus and Hanson showed that $c(3,3) \leq 15$ as follows. Let $G$ be a $K_3$-saturated graph with $\delta(G)=3$. We need to show that $e(G) \geq 3|G|-15$. \\

Duffus and Hanson first showed that for such $G$, either $e(G)>3|G|- 15$ or $G \supseteq P$ (Lemma 4.1 in \cite{Reference 3}). They then showed that if $G \supseteq P$, $e(G) \geq 3|G|-15$ (proof of Theorem 4 in \cite{Reference 3}), completing the proof. All that was missing to determine the extremal graphs, or equivalently $C(3,3)$, was to examine when we have equality in this final step of the argument. \\

It is easy to see from Duffus and Hanson's proof that the copy of $P$ they find in $G$ has a vertex of degree $3$ in $G$ ($x$ in the proof of Lemma 4.1 in \cite{Reference 3}), or in other words a vertex whose only neighbours in $G$ are those in $P$. This observation allows us to simplify Duffus and Hanson's proof that $c(3,3) \leq 15$ by replacing the final step of the argument with a proof that if $G \supseteq P$ \emph{and} $P$ contains a vertex of degree $3$ in $G$, $e(G) \geq 3|G|-15$. We then determine $C(3,3)$ by considering when we have equality. \\

Let $s$ be a vertex of $P$. Then $P$ can be described as follows. The vertex set of $P$ is $\{s,x_1,x_2,x_3,y_1,y_2,y_3,z_1,z_2,z_3\}$ and the edges are as follows. $s$ is adjacent to all the $x_i$, each $x_i$ is adjacent to $y_i$ and $z_i$, $y_i$ and $z_j$ are adjacent for all $i \neq j$ and there are no other edges. Let $Q$ be the graph with the same description as $P$, but with the indices $i$ ranging from $1$ to $4$ instead. So $|Q|=13$ and $e(Q)=24$. It is easy to check that $Q$ is $K_3$-saturated and $\delta(Q)=3$. Note that $e(Q)=3|Q|-15$. For both $P$ and $Q$ it is easy to check that all vertices have distinct neighbourhoods. Hence $\{P,Q\} \subseteq C(3,3)$. This shows that, while Theorem \ref{blow-up} guarantees that $C(r,t)$ is always finite, it can contain more than one graph. We now show that this containment is in fact an equality. 

\begin{theorem}\label{P,Q}
$C(3,3)=\{P,Q\}$.
\end{theorem}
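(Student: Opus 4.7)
The inclusion $\{P,Q\}\subseteq C(3,3)$ is already established, so the plan is to prove the reverse inclusion. Take $G\in C(3,3)$, so $G$ is $K_3$-saturated with $\delta(G)=3$, $e(G)=3|G|-15$, and $G$ cannot be obtained from a smaller member of $C(3,3,-15)$ by blowing up non-adjacent degree-$3$ vertices. Applying Duffus and Hanson's Lemma 4.1---together with the observation that the copy of $P$ it produces contains a vertex (call it $s$) of degree exactly $3$ in $G$---I would fix a labeled Petersen subgraph $P\subseteq G$ with $\Gamma_G(s)=\{x_1,x_2,x_3\}$. A short case check using only $K_3$-freeness of $G$ shows $P$ is induced in $G$ (each non-edge of $P$ would complete a triangle with another $V(P)$-vertex, e.g. $x_ix_j$ with $s$, $y_iz_i$ with $x_i$, $y_iy_j$ with $z_k$ for $\{i,j,k\}=\{1,2,3\}$, etc.). Setting $W=V(G)\setminus V(P)$, the edge-count identity then becomes
$$ e(V(P),W)+e(W)=3|W|, \qquad \text{equivalently} \qquad \sum_{w\in W}(d_G(w)-3)=e(W). $$

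Next I would use $K_3$-saturation to severely restrict each $w\in W$. Since $s$ has degree $3$, every $w$ is non-adjacent to $s$ and must share a neighbor with $s$, so the set $X(w):=\{i : w\sim x_i\}$ is non-empty. Combined with $K_3$-freeness, this forbids $w\sim y_i,z_i$ whenever $i\in X(w)$ and rules out further pairs of neighbors in $V(P)$ (for instance $w$ cannot be adjacent to both $y_j$ and $z_k$ for $j\neq k$, since $y_j\sim z_k$ in $P$). A case split on $|X(w)|\in\{1,2,3\}$, together with the requirement that each non-adjacent pair $(w,v)$ with $v\in V(P)$ has a common neighbor in $G$, leaves only a short explicit list of possible neighborhoods $\Gamma(w)\cap V(P)$ in each case.

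The critical use of the minimality of $G$ is that $G$ contains no pair of non-adjacent degree-$3$ vertices with identical neighborhoods: otherwise merging them yields a smaller graph in $C(3,3,-15)$, contradicting $G\in C(3,3)$. This ``no twins'' condition eliminates most remaining candidates for $\Gamma(w)\cap V(P)$. For example, $\Gamma(w)=\{x_1,x_2,x_3\}$ twins $w$ with $s$, while $\Gamma(w)=\{x_1,y_2,y_3\}$ twins $w$ with $z_1$ (whose $P$-neighborhood is also $\{x_1,y_2,y_3\}$), and so on; the only surviving neighborhoods are those compatible with the Petersen-like extension pattern appearing in $Q$. Combining these surviving options with the tight equality $\sum_{w}(d_G(w)-3)=e(W)$ should force either $W=\emptyset$ (in which case $G=P$) or $|W|=3$ with $W$ attached to $V(P)$ in exactly the $Q$-pattern (in which case $G=Q$).

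The main obstacle I anticipate is this last step: carrying out the combinatorial bookkeeping carefully enough to exclude all configurations with $|W|\notin\{0,3\}$, and to show that $|W|=3$ admits only the $Q$-structure. The difficulty is that the saturation constraints between distinct vertices $w,w'\in W$ (not just between $W$ and $V(P)$) must also be tracked, since $W$-vertices contribute to each other's common-neighbor requirements. I expect that, by iterating the local restriction arguments above---treating each $w\in W$ of degree $3$ as a new ``$s$-like'' anchor and repeating the analysis---one can bootstrap the Petersen structure out by at most one additional ``fan'' $\{x_4,y_4,z_4\}$, which is precisely the $Q$-configuration, and conclude $C(3,3)\subseteq\{P,Q\}$.
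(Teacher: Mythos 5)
Your opening matches the paper's: use Duffus--Hanson's Lemma 4.1 to obtain an induced copy of $P$ containing a vertex $s$ of degree $3$ in $G$, and account for $e(G)-3|G|+15$ as a sum over $W=V(G)\setminus V(P)$. However, the ``no twins'' step is not a valid consequence of $G\in C(3,3)$. Membership in $C(3,3)$ says $G$ is not a proper blow-up of a smaller $K_3$-saturated graph $H$ with $\delta(H)=3$ and $e(H)=3|H|-15$; deleting one of a non-adjacent degree-$3$ twin pair $v,v'$ does give a smaller graph with the right edge count, but its minimum degree may drop to $2$, since any $u\in\Gamma(v)=\Gamma(v')$ with $d_G(u)=3$ loses a neighbour. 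Before the structure of $G$ is established you cannot exclude such a $u$, so you cannot merge twins mid-argument. (Compare the proof of Theorem~\ref{blow-up}, which only merges when more than $t$ twins are present, precisely so that the common neighbours keep degree at least $t$.) The paper sidesteps this by never invoking minimality until the very end: it proves that every $K_3$-saturated $G$ with $\delta(G)=3$, $e(G)=3|G|-15$, and an induced $P$ containing a degree-$3$ vertex is a blow-up of $P$ or $Q$, and only then reads off $C(3,3)=\{P,Q\}$ from the fact that $P$ and $Q$ have pairwise distinct neighbourhoods.

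More importantly, the combinatorial core of the argument is absent. The paper classifies each $w\in W$ by $d_P(w)=|\Gamma(w)\cap V(P)|$, proves that in the equality case no $w$ has $d_P(w)=2$ (Claims 3--4 plus the chain-of-forced-neighbourhoods contradiction that follows), and then splits on whether any $w$ has $d_P(w)=1$: if not, all of $W$ consists of mutually non-adjacent degree-$3$ vertices whose neighbourhoods are the maximal independent $3$-subsets of $V(P)$, forcing a blow-up of $P$; if so, Claim 5 forces a specific three-vertex gadget $\{a,b,c\}$ completing $Q$ and then pins down every remaining vertex of $W$. Your plan to iterate the analysis using each degree-$3$ vertex of $W$ as a new anchor is a plausible heuristic, but you give no argument for why it terminates after ``one extra fan'' rather than producing a longer chain or some other configuration, and that exclusion is exactly where the work lies. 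As written the proposal stops before the proof begins in earnest.
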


\begin{proof}

Let $G$ be a $K_3$-saturated graph with $\delta(G)=3$. Suppose $G \supseteq P$ and $P$ contains a vertex $s$ of degree $3$ in $G$. We need to show that $e(G) \geq 3|G|-15$, with equality only if $G$ is obtained from either $P$ or $Q$ by blowing up non-adjacent vertices of degree $3$. Note that, since $P$ is $K_3$-saturated and $G$ is $K_3$-free, $P$ must be an induced subgraph of $G$.  We will use the previous description of $P$. For each $v \in V(G) \setminus V(P)$, let $d_P(v)=|\Gamma(v)\cap V(P)|$, $d_>(v)=|\{w \in \Gamma(v) \setminus V(P) : \ d_P(w)>d_P(v) \}|$ and $d_=(v)=|\{w \in \Gamma(v) \setminus V(P) : \ d_P(w)=d_P(v) \}|$. We then have

\begin{equation*}
e(G)=15+\sum_{v \in V(G) \setminus V(P)} \left(d_P(v)+d_>(v)+\frac{1}{2}d_=(v) \right) \ .
\end{equation*}

\leavevmode\newline Our aim now is to show that $d_P(v) + d_>(v) + \frac{1}{2}d_=(v) \geq 3$ for all $v \in V(G) \setminus V(P)$, so that $e(G) \geq 15+3(|G|-10)=3|G|-15$. We first prove three claims. \\

\emph{Claim 1:} Every $v \in V(G) \setminus V(P)$ is adjacent to one of the $x_i$. \\

\emph{Proof:} $G$ is $K_3$-saturated and $s$ has degree $3$ in $G$. \\

\emph{Claim 2:} Let $v \in V(G) \setminus V(P)$ be non-adjacent to $y_3$, $z_2$ and all their neighbours in $P$. Then there exist distinct vertices $u,w \in \Gamma(v) \setminus V(P)$ with $d_P(u),d_P(w) \geq 2$ adjacent to $y_3$ and $z_2$, respectively. \\

\emph{Proof:} Since $G$ is $K_3$-saturated, there must exist vertices $u, w \in \Gamma(v) \setminus V(P)$ adjacent to $y_3$ and $z_2$, respectively. Since $y_3$ and $z_2$ are adjacent and $G$ is $K_3$-free, $u$ and $w$ must be distinct. By Claim 1, $u$ and $w$ must also be adjacent to one of the $x_i$, so $d_P(u),d_P(w) \geq 2$.  \\

\emph{Claim 3:} Suppose $v \in V(G) \setminus V(P)$ and $d_P(v)=2$. Then, up to symmetry, $\Gamma(v) \cap V(P) = \{x_1,y_2\}$. \\

\emph{Proof:} By Claim 1, $v$ is adjacent to at least one of the $x_i$. We first show that $v$ is adjacent to exactly one of the $x_i$. Suppose for the sake of contradiction that $v$ is adjacent to two of the $x_i$, say $x_1$ and $x_2$. Then, since $d_P(v)=2$, $v$ is not adjacent to $y_3$ and all of its neighbours in $P$. Hence, since $G$ is $K_3$-saturated, there must be a vertex $w \in V(G) \setminus V(P)$ adjacent to both $v$ and $y_3$. But then, since $x_3$ is adjacent to $y_3$ and $G$ is $K_3$-free, $w$ cannot be adjacent to any of the $x_i$, contradicting Claim 1. \\

So $v$ is adjacent to exactly one of the $x_i$, say $x_1$. Then $v$ is not adjacent to $s$, since $s$ has degree $3$ in $G$, and not adjacent to $y_1$ and $z_1$, since $x_1$ is adjacent to both $y_1$ and $z_1$ and $G$ is $K_3$-free. Hence the other neighbour of $v$ in $P$ must be one of $y_2$, $y_3$, $z_2$ or $z_3$, so up to symmetry, $\Gamma(v) \cap V(P) = \{x_1,y_2\}$. \\

We now show that $d_P(v)+d_>(v)+\frac{1}{2}d_=(v) \geq 3$ for all $v \in V(G) \setminus V(P)$. By Claim 1, $d_P(v) \geq 1$. We consider the three cases $d_P(v) = 1$, $d_P(v) = 2$ and $d_P(v) \geq 3$ separately. \\

\emph{Case 1:} $d_P(v) = 1$. \\

\emph{Proof:} By Claim 1, the unique neighbour of $v$ in $P$ must be one of the $x_i$, say $x_1$. Then by Claim 2, there exist distinct vertices $u,w \in \Gamma(v) \setminus V(P)$ with $d_P(u),d_P(w) \geq 2$. Hence $d_>(v) \geq 2$, so $d_P(v)+d_>(v)+\frac{1}{2}d_=(v) \geq 3$. \\

\emph{Case 2:} $d_P(v)=2$. \\

\emph{Proof:} By Claim 3, without loss of generality, $\Gamma(v) \cap P = \{x_1,y_2\}$. Then by Claim 2, there exist distinct vertices $u,w \in \Gamma(v) \setminus V(P)$ with $d_P(u),d_P(w) \geq 2$. Hence $d_>(v)+d_=(v) \geq 2$, so $d_P(v)+d_>(v)+\frac{1}{2}d_=(v) \geq 3$. \\

\emph{Case 3:} $d_P(v) \geq 3$. \\

\emph{Proof:} It immediately follows that $d_P(v)+d_>(v)+\frac{1}{2}d_=(v) \geq 3$. \\

This concludes the proof that $d_P(v)+d_>(v)+\frac{1}{2}d_=(v) \geq 3$ for all $v \in V(G) \setminus V(P)$ and hence $e(G) \geq 3|G|-15$. Suppose now that $e(G)=3|G|-15$. We need to show that $G$ is obtained from either $P$ or $Q$ by blowing up non-adjacent vertices of degree $3$. We must have $d_P(v)+d_>(v)+\frac{1}{2}d_=(v) = 3$ for all $v \in V(G) \setminus V(P)$, which translates to the following.

\begin{itemize}

\item For $v \in V(G) \setminus V(P)$ with $d_P(v)=1$, we have $d_>(v)=2$ and $d_=(v)=0$.
\item For $v \in V(G) \setminus V(P)$ with $d_P(v)=2$, we have $d_>(v)=0$ and $d_=(v)=2$.
\item For $v \in V(G) \setminus V(P)$ with $d_P(v) \geq 3$, we have $d_P(v)=3$ and $d_=(v)=0$.

\end{itemize}

Our aim now is to show that there do not exist vertices $v \in V(G) \setminus V(P)$ with $d_P(v)=2$. We first prove the following claim. \\

\emph{Claim 4:} Suppose $v \in V(G) \setminus V(P)$ and $\Gamma(v) \cap V(P) = \{x_1,y_2\}$. Then $\{a \in \Gamma(v) \setminus V(P) : \ d_P(a)=2 \}=\{u,w\}$, for some vertices $u$ and $w$ with $\Gamma(u) \cap V(P) = \{x_2,y_3\}$ and $\Gamma(w) \cap V(P) = \{x_3,z_2\}$. \\

\emph{Proof:} By Claim 2, there exist distinct vertices $u,w \in \Gamma(v) \setminus V(P)$ with $d_P(u),d_P(w) \geq 2$ adjacent to $y_3$ and $z_2$, respectively. Since $d_>(v)=0$,  we must have $d_P(u)=2=d_P(w)$. Then since $u$ and $w$ are distinct and $d_=(v)=2$, we must have $\{a \in \Gamma(v) \setminus V(P) : \ d_P(a)=2 \}=\{u,w\}$. Now, $u$ cannot be adjacent to $x_1$ or $x_3$, since $x_3$ and $y_3$ are adjacent and $G$ is $K_3$-free, so $u$ must be adjacent to $x_2$ by Claim 1. Since $d_P(u)=2$, we must have $\Gamma(u) \cap V(P) = \{x_2,y_3\}$. Similarly, $w$ cannot be adjacent to $x_1$ or $x_2$, since $x_2$ and $z_2$ are adjacent and $G$ is $K_3$-free, so $w$ must be adjacent to $x_3$ by Claim 1. Since $d_P(w)=2$, we must have $\Gamma(w) \cap V(P) = \{x_3,z_2\}$. \\

We now show that there do not exist vertices $v \in V(G) \setminus V(P)$ with $d_P(v)=2$. Suppose for the sake of contradiction that there is a vertex $v \in V(G) \setminus V(P)$ with $d_P(v)=2$. By Claim 3, without loss of generality, $\Gamma(v) \cap V(P) = \{x_1,y_2\}$. Then by Claim 4, $\{a \in \Gamma(v) \setminus V(P) : \ d_P(a)=2 \}=\{u,w\}$, for some vertices $u$ and $w$ with $\Gamma(u) \cap V(P) = \{x_2,y_3\}$ and $\Gamma(w) \cap V(P) = \{x_3,z_2\}$. By Claim 4 again and symmetry, $\{a \in \Gamma(u) \setminus V(P) : \ d_P(a)=2 \}=\{b,c\}$, for some vertices $b$ and $c$ with $\Gamma(b) \cap V(P) = \{x_3,y_1\}$ and $\Gamma(c) \cap V(P) = \{x_1,z_3\}$. But then $v \in \{a \in \Gamma(u) \setminus V(P) : d_P(a)=2 \}=\{b,c\}$, which is impossible since $\Gamma(v) \cap V(P)$ is distinct from $\Gamma(b) \cap V(P)$ and $\Gamma(c) \cap V(P)$. \\

We now consider the case where vertices $v \in V(G) \setminus V(P)$ with $d_P(v)=1$ do not exist and the case where vertices $v \in V(G) \setminus V(P)$ with $d_P(v)=1$ do exist separately. These cases correspond to the case where $G$ is obtained from $P$ by blowing up non-adjacent vertices of degree $3$ and the case where $G$ is obtained from $Q$ by blowing up non-adjacent vertices of degree $3$, respectively. \\

\emph{Case 1:} There do not exist vertices $v \in V(G) \setminus V(P)$ with $d_P(v)=1$. \\

\emph{Proof:} We have $d_P(v)=3$ for all $v \in V(G) \setminus V(P)$. Moreover, all these vertices are non-adjacent, since $d_=(v)=0$ for such vertices. Since $G$ is $K_3$-free, the neighbourhoods of these vertices are independent subsets of $V(P)$. Since $G$ is $K_3$-saturated, these sets are in fact maximally independent subsets of $V(P)$. Finally, since the vertices outside $V(P)$ are non-adjacent and $G$ is $K_3$-saturated, these maximally independent subsets of $V(P)$ are also intersecting. \\

It is easy to check that in $P$, all maximally independent subsets of size $3$ are the neighbourhood of a vertex. Hence the neighbourhood of every vertex outside $V(P)$ is the neighbourhood in $P$ of some vertex in $V(P)$. Since these neighbourhoods are intersecting and $P$ is $K_3$-free, these vertices in $V(P)$ are non-adjacent. Hence $G$ is obtained from $P$ by blowing up non-adjacent vertices (of degree $3$). \\

\emph{Case 2:} There do exist vertices $v \in V(G) \setminus V(P)$ with $d_P(v)=1$. \\

\emph{Proof:} We first prove the following claim. \\

\emph{Claim 5:} Suppose $v \in V(G) \setminus V(P)$ and $d_P(v)=1$. Then, up to symmetry, $\Gamma(v)=\{x_1,b,c\}$, for some vertices $b,c \in V(G) \setminus V(P)$ with $\Gamma(b) \cap V(P)=\{x_2,y_3,z_3\}$ and $\Gamma(c) \cap V(P)=\{x_3,y_2,z_2\}$. Moreover, every vertex $w \in V(G) \setminus V(P)$ with $d_P(w)=3$ is adjacent to $x_1$ if and only if $w \not \in \{b,c\}$. \\

\emph{Proof:} By Claim 1, without loss of generality, $\Gamma(v) \cap V(P) = \{x_1\}$. Since $d_>(v)=2$ and $d_=(v)=0$, $\Gamma(v) \setminus V(P) = \{b,c\}$, for some vertices $b$ and $c$ with $d_P(b)=3=d_P(c)$. Let $S=\{x_2,x_3,y_2,y_3,z_2,z_3\}$. Then $v$ is not adjacent to every vertex in $S$ and every neighbour in $P$ of a vertex in $S$ and $G$ is $K_3$-saturated, so $S \subseteq (\Gamma(b) \cap V(P)) \cup (\Gamma(c) \cap V(P))$. Since $G$ is $K_3$-free, $\Gamma(b) \cap V(P)$ and $\Gamma(c) \cap V(P)$ must be independent subsets of $V(P)$. But there is a unique way of covering $S$ by two independent subsets of $V(P)$ of size $3$, so without loss of generality, $\Gamma(b) \cap V(P)=\{x_2,y_3,z_3\}$ and $\Gamma(c) \cap V(P)=\{x_3,y_2,z_2\}$. Note that $b$ and $c$ are not adjacent to $x_1$, since $G$ is $K_3$-free. Finally, if $w \not \in \{b,c\}$, $w$ must be adjacent to $x_1$, $b$ or $c$, since $w$ is not adjacent to $v$ and $G$ is $K_3$-saturated. But $w$ is not adjacent to $b$ and $c$, since $d_=(w)=0$, so $w$ must be adjacent to $x_1$.  \\

Pick a vertex $a \in V(G) \setminus V(P)$ with $d_P(a)=1$. By Claim 5, without loss of generality, $\Gamma(a)=\{x_1,b,c\}$, for some vertices $b,c \in V(G) \setminus V(P)$ with $\Gamma(b) \cap V(P)=\{x_2,y_3,z_3\}$ and $\Gamma(c) \cap V(P)=\{x_3,y_2,z_2\}$. Then $G[V(P) \cup \{a,b,c\}]=Q$. \\

We now show that $\Gamma(v)=\Gamma(a)$ for every $v \in V(G) \setminus V(P)$ with $d_P(v)=1$. We first show that $\Gamma(v) \cap V(P)=\{x_1\}$. Indeed, if not, by Claim 5, without loss of generality, $\Gamma(v)=\{x_2,d,e\}$, for some vertices $d,e \in V(G) \setminus V(P)$ with $\Gamma(d) \cap V(P)=\{x_1,y_3,z_3\}$ and $\Gamma(e) \cap V(P)=\{x_3,y_1,z_1\}$. But then $\Gamma(v) \cap \Gamma(a) = \emptyset$, which is a contradiction, since $v$ and $a$ are not adjacent and $G$ is $K_3$-saturated. So $\Gamma(v) \cap V(P)=\{x_1\}$. By Claim 5 again, $\Gamma(v)=\{x_1,d,e\}$, for some vertices $d,e \in V(G) \setminus V(P)$, and for every $w \in V(G) \setminus V(P)$ with $d_P(w)=3$, $w \not \in \{d,e\}$ if and only if $w$ is adjacent to $x_1$ if and only if $w \not \in \{b,c\}$. Hence $\{d,e\}=\{b,c\}$, so $\Gamma(v)=\Gamma(a)$. \\ 

We now show that for every $v \in V(G) \setminus V(Q)$ with $d_P(v)=3$, we have $\Gamma(v) = \Gamma(w) \cap V(Q)$, for some vertex $w \in \{s,y_1,z_1\}$. Since $d_=(v)=0$ and $\Gamma(u) \subseteq V(Q)$ for all $u \in V(G) \setminus V(P)$ with $d_P(u)=1$, we have $\Gamma(v) \subseteq V(P)$. Since $G$ is $K_3$-free, $\Gamma(v)$ is an independent subset of $V(P)$. Since $G$ is $K_3$-saturated, $\Gamma(v)$ is in fact a maximally independent subset of $V(P)$. Moreover, by Claim 5, $x_1 \in \Gamma(v)$. But the only maximally independent subsets of $V(P)$ of size $3$ containing $x_1$ are the neighbourhoods in $Q$ of $s$, $y_1$ and $z_1$. \\

Finally, note that the set $\{s,y_1,z_1,a\}$ of vertices in $V(Q)$ of degree $3$ in $Q$ is an independent set. Hence $G$ is obtained from $Q$ by blowing up (non-adjacent) vertices of degree $3$.

\end{proof}

Recall (see section \ref{sec con}) that Conjecture \ref{con vert} is true for $t<2(r-2)$. In particular, Conjecture \ref{con vert} is true when $t=r \geq 5$. Recall also (see section \ref{sec con} again) that Conjecture \ref{con vert} is true when $t=r=4$. Hence $c(r,r)=\binom{r+1}{2}+9$ and $C(r,r)=\{P^{r-3},Q^{r-3}\}$ for all integers $r \geq 3$. \\

Finally, we show that $c'(3,2)=c(3,2)=5$ and $C'(3,2)=\{P\}$. So when $r=3$ and $t=2$ the answer to Question \ref{q} is again ``Yes.", but this time $C'(r,t)$ is not empty. Let $G$ be a $K_3$-saturated graph with $\delta(G) \geq 3$. We need to show that $e(G) \geq 2|G|-5$, with equality if and only if $G=P$. If $|G|<10$, we have $e(G) \geq 3|G|/2 > 2|G|-5$, so suppose $|G| \geq 10$. If $\delta(G) \geq 4$, we have $e(G) \geq 2|G|$, so suppose $\delta(G)=3$. Then by Theorem \ref{P,Q}, we have $e(G) \geq 3|G|-15 \geq 2|G|-5$, with equality if and only if $G$ is obtained from either $P$ or $Q$ by blowing up non-adjacent vertices of degree $3$ and $|G|=10$. But $|P|=10$ and $|Q|=13$, so we have equality if and only if $G=P$. \\

The known values of $c(r,t)$, $C(r,t)$, $c'(r,t)$ and $C'(r,t)$ are summarised in the table below. The values of $c'(r,r-1)$ and $C'(r,r-1)$ are only known for $r=3$ and the values of $c'(r,r)$ and $C'(r,r)$ are unknown.

\setlength{\tabcolsep}{18pt}
\renewcommand{\arraystretch}{1.5}

\begin{table}[h!]
\centering
\begin{tabu}{ |[0.5mm]c|[0.5mm]c|c|c|c|[0.5mm] }
 \tabucline[0.5mm]{-}
  & $c(r,t)$ & $C(r,t)$ & $c'(r,t)$ & $C'(r,t)$ \\ 
 \tabucline[0.5mm]{-}
 $t=r-2$ & $\binom{r-1}{2}$ & $\{K_{r-1}\}$ & $\binom{r-1}{2}$ & $\emptyset$ \\  
 \hline
 $t=r-1$ & $\binom{r}{2} + 2$ & $\{C_5^{r-3}\}$ & $c'(3,2)=5$ & $C'(3,2)=\{P\}$ \\  
 \hline
 $t=r$ & $\binom{r+1}{2} + 9$ & $\{P^{r-3},Q^{r-3}\}$ & & \\  
 \tabucline[0.5mm]{-}
\end{tabu}
\caption{The known values of $c(r,t)$, $C(r,t)$, $c'(r,t)$ and $C'(r,t)$.}
\end{table}

\section{Ideas for further work}\label{concl}

This section is organised as follows. In section \ref{exact} we speculate about the exact value of $c(3,t)$ for large $t$. In section \ref{remove} we discuss how one might remove the $\min{(r,\sqrt{t-r+3})}$ factor from the upper bounds in Theorems \ref{c est} and \ref{vert bound} .

\subsection{A speculative exact value of $c(3,t)$}\label{exact}

Recall our lower bound 

\begin{equation}\label{e300}
c(3,t) \geq t|G_t|-e(G_t) = \frac{1}{2} \binom{2(t-1)}{t-1} + 2t(t-1)
\end{equation}   

for $t \geq 3$ (see the proof of Theorem \ref{c est} in section \ref{mainresults}). Note that we have equality when $t=3$ and indeed $G_3=P$. By Theorem \ref{c est}, (\ref{e300}) is tight up to a constant factor. Moreover, the definition of $G_t$ seems fairly natural. Given all this, one might think that perhaps we have equality in (\ref{e300}), but this turns out to not be the case for $t \geq 6$. Indeed, for each integer $t \geq 4$, let $H_t$ be the graph obtained from $G_{t-1}$ by blowing up the vertices in $R$ (see section \ref{constr}) by two. Then it is easy to check that $H_t$ is $K_3$-saturated, $\delta(H_t)=t$, $|H_t|=2\binom{2(t-2)}{t-2} + 2(t-2)$ and $e(H_t)=2(t-1)\binom{2(t-2)}{t-2}$. Hence

\begin{equation}\label{e310}
c(3,t) \geq t|H_t|-e(H_t) = 2 \binom{2(t-2)}{t-2} + 2t(t-2)
\end{equation} 

for $t \geq 4$. For $t \geq 6$, the right hand side of (\ref{e310}) is strictly greater than that of (\ref{e300}), though only by a lower order term. Perhaps we have equality in (\ref{e310}) for large enough $t$ (and perhaps even for $t \geq 6$).

\begin{probl}
Determine the exact value of $c(3,t)$.
\end{probl}

\subsection{Ideas to remove the $\min{(r,\sqrt{t-r+3})}$ factor}\label{remove}
  
In this section we discuss how one might remove the $\min{(r,\sqrt{t-r+3})}$ factor from the upper bounds in Theorems \ref{c est} and \ref{vert bound} to match the lower bounds. In Theorems \ref{c est} and \ref{vert bound}, the factor arises from having to sum over all possible values of $c$ and $b$ in the proofs of Lemma \ref{tech} and Theorem \ref{vert bound}, respectively. More precisely, in the proofs of Lemma \ref{tech} and Theorem \ref{vert bound} we had the following.

\begin{definition}\label{var mod set}

For integers $a$, $b$ and $c$ with $a \geq c \geq -1$, an $(a,b,c)$ \emph{variable modified set system} is a sequence $(A_i,B_i,c_i)_{i \in I}$ of triples, where $A_i$ and $B_i$ are finite sets and $\max{(0,1-b)} \leq c_i \leq c$ is an integer, indexed by a finite, ordered set $I$, with the following properties.

\begin{enumerate}

    \item $|A_i \cap \cup_{k < i} \ A_k| \leq a$ and $|B_i| \leq b+2c_i$ for all $i \in I$.
    \item $|A_i \cap B_i| \leq c_i  $ for all $i \in I$.
    \item $|A_i \cap B_j| > c_j $ for all $i < j \in I$.
    
\end{enumerate}

\end{definition}

In the proof of Lemma \ref{tech}, $(A_i,E_i, \ r-3-w(D_i))_{i \in J_{a,b}}$ was a $(t-a, \ t-2r+6-b, \ \min{(t-a-1,r-3)})$ variable modified set system for all integers $1 \leq a,b \leq t$ by (\ref{e14}), (\ref{e15}), (\ref{e11}), (\ref{e12}), (\ref{e3}), (\ref{e16}) and the definition of $I_{a,b,c}$, where $J_{a,b} = \cup_{t-a > r-3-c} \ I_{a,b,c}$. In the proof of Theorem \ref{vert bound}, $(A_i,D_i, \ r-3-w(C_i))_{i \in (I_a \setminus J_a ) \setminus \{f\} }$ was a $(t-a, \ t-2r+6, \ \min{(t-a-1,r-3)})$ variable modified set system for all integers $1 \leq a \leq t $ by (\ref{e30}), (\ref{e27}), (\ref{e28}), (\ref{e29}), (\ref{e18}), (\ref{e31}) and the definition of $J_a$. \\

% $I_d=I_{k,l,r-3-d}$ for all $d$

%$I_d=I_{k,r-3-d}$ for all $d$, respectively.

In the proofs of Lemma \ref{tech} and Theorem \ref{vert bound} we bounded $|I|$ for these $(a,b,c)$ variable modified set systems as follows. For each integer $\max{(0,1-b)} \leq d \leq c$, let $I_d=\{i \in I : \ c_i=d\}$. Then the $I_d$ partition $I$ and $|I_d| \ll \binom{a+b+1}{a-d+1}$ for all $d$ by Theorem \ref{TF}, so $|I| \ll \sum_{\max{(0,1-b)} \leq d \leq c} \binom{a+b+1}{a-d+1}$. Recall that by partitioning $I$ into the $I_d$, we discarded the information that condition 3 in Definition \ref{var mod set} holds for $i$ and $j$ in different parts. One might think that if one could somehow bound $|I|$ directly, avoiding this partitioning, as we did for case 3 in the proof of Lemma \ref{tech} and for cases 2 and 3 in the proof of Theorem \ref{vert bound}, one would obtain a better bound for $|I|$, enabling one to remove the $\min{(r,\sqrt{t-r+3})}$ factor, but this turns out to not be the case. Indeed, we now give an example of an $(a,b,c)$ variable modified set system which shows that, even if one obtained a better bound for $|I|$, the $\min{(r,\sqrt{t-r+3})}$ factor would still arise. \\

Let $a$, $b$ and $c$ be integers with $a \geq c \geq -1$. Let $I$ be the set of all integers $\max{(0,1-b)} \leq d \leq c$, ordered with the dual of the usual order. For each $d \in I$, let $(A_i,B_i)_{i \in I_d}$ be the collection of all partitions of a set $S_d$ of size $a+b-1$ into subsets $A_i$ and $B_i$ of size $a-d$ and $b+d-1$, respectively. Let $T$ be an ordered set of size $c+1$ disjoint from all the $S_d$. For each integer $0 \leq e \leq c+1$, let $T_e$ be the set of the first $e$ elements of $T$. Take the $I_d$ to be disjoint and order them arbitrarily. Let $J=\cup_{d \in I} \ I_d$, ordered with the sum of the orders on the $I_d$. For each $d \in I$ and $i \in I_d$, let $A'_i=A_i \cup T_d$, $B'_i=B_i \cup T_{d+1}$ and $c_i=d$. Then it is easy to check that $(A'_i,B'_i,c_i)_{i \in J}$ is an $(a,b,c)$ variable modified set system with $|J|=\sum_{\max{(0,1-b)} \leq d \leq c} \binom{a+b-1}{a-d}$. \\

Note that this comes close to our upper bound. In particular, in the proof of Lemma \ref{tech}, (\ref{e14}), (\ref{e15}), (\ref{e11}), (\ref{e12}), (\ref{e3}) and (\ref{e16}) can be satisfied with $\left|\bigcup_{t-1>r-3-c} \ I_{1,1,c}\right|$ as large as 

$$ \sum_{\substack{0 \leq d \leq r-3 \\ 2r-t-4 \leq d}} \binom{2t-2r+3}{t-d-1} \ \asymp \ \frac{4^{t-r} \min{(r,\sqrt{t-r+3})}}{\sqrt{t-r+3}}  $$

(for $t>r-2$) and in the proof of Theorem \ref{vert bound}, (\ref{e30}), (\ref{e27}), (\ref{e28}), (\ref{e29}), (\ref{e18}) and (\ref{e31}) can be satisfied with $\left|(I_1 \setminus J_1) \setminus \{f\}\right|$ as large as 

$$ \sum_{\substack{0 \leq d \leq r-3 \\ 2r-t-5 \leq d \\ }} \binom{2t-2r+4}{t-d-1} \ \asymp \ \frac{4^{t-r} \min{(r,\sqrt{t-r+3})}}{\sqrt{t-r+3}} \ . $$

\leavevmode \newline So a different approach is required to remove the $\min{(r,\sqrt{t-r+3})}$ factor. In the proofs of Lemma \ref{tech} and Theorem \ref{vert bound} we had a sequence $(A_i,B_i)_{i \in I}$ of pairs of sets of vertices in a graph, indexed by a finite, ordered set $I$, such that $w(A_i \cap B_i) \leq r-3$ for all $i \in I$ and $w(A_i \cap B_j) > r-3$ for all $i<j \in I$ ((\ref{e3}) and (\ref{e4}) in the proof of Lemma \ref{tech} and (\ref{e18}) and (\ref{e19}) in the proof of Theorem \ref{vert bound}). We then applied Lemma \ref{Graph Theory Lemma} to $G[A_i \cap B_i]$ for each $i \in I$ to obtain a set $S_i \subseteq A_i \cap B_i$ such that $|S_i|-w(S_i)=|A_i \cap B_i|-w(A_i \cap B_i)$ and $|S_i| \geq 2w(S_i)$ and showed that we have $|A_i \cap B_i \setminus S_i| \leq r-3-w(S_i)$ for all $i \in I$ and $|A_i \cap B_j \setminus S_j| > r-3-w(S_j)$ for all $i<j \in I$ ((\ref{e8}), (\ref{e9}), (\ref{e11}) and (\ref{e12}) in the proof of Lemma \ref{tech} and (\ref{e25}), (\ref{e26}), (\ref{e28}) and (\ref{e29})  in the proof of Theorem \ref{vert bound})). But the same argument shows that in fact $|A_i \cap B_j \setminus S_k| > r-3-w(S_k)$ for all $i < j \in I$ and $k \in I$. Hence, in the proofs of Lemma \ref{tech} and Theorem \ref{vert bound} we had the following.

\begin{definition}\label{gen mod set}

For integers $a$, $b$, $c$ and $d$ with $a \geq d$, $b \geq c+d$ and $c,d \geq 0$, an $(a,b,c,d)$ \emph{generalised modified set system} is a sequence $(A_i,B_i,C_i)_{i \in I}$ of triples of finite sets with $C_i \subseteq A_i \cap B_i$, indexed by a finite, ordered set $I$, with the following properties.

\begin{enumerate}

    \item $|A_i \cap \cup_{k < i} \ A_k| \leq a$, $|B_i| \leq b$ and $|C_i| \geq c$ for all $i \in I$.
    \item $|A_i \cap B_i \setminus C_i| \leq d $ for all $i \in I$.
    \item $|A_i \cap B_j \setminus C_k| > d $ for all $i < j \in I$ and $k \in I$.
    
\end{enumerate}

\end{definition}

In the proof of Lemma \ref{tech}, $(A_i,B_i,D_i)_{i \in I_{a,b,c}}$ was a $(t-a, \ t-b, \ 2c, \ r-3-c)$ generalised modified set system for all integers $1 \leq a,b \leq t$ and $0 \leq c \leq \min{(r-3,t-r+2-b)}$ with $t-a > r-3-c$ by (\ref{e14}), the definition of $I_{a,b}$, (\ref{e9}), (\ref{e11}) and the generalisation of (\ref{e12}). In the proof of Theorem \ref{vert bound}, $(A_i,B_i,C_i)_{i \in I_{a,b}}$ was a $(t-a, \ t, \ 2b, \ r-3-b)$ generalised modified set system for all integers $1 \leq a \leq t$ and $\max{(0,a-t+r-2)} \leq b \leq \min{(r-3,t-r+2)}$ by the definition of $I_a$, (\ref{e23}), (\ref{e26}), (\ref{e28}) and the generalisation of (\ref{e29}). \\

In the proofs of Lemma \ref{tech} and Theorem \ref{vert bound} we bounded $|I|$ for these $(a,b,c,d)$ generalised modified set systems as follows. Taking $k=j$ in condition 3 in Definition \ref{gen mod set}, we obtain that $(A_i,B_i \setminus C_i)_{i \in I}$ is an $(a,b-c,d)$ modified set system, so $|I| \ll \binom{a+b-c-2d+1}{a-d+1}$ by Theorem \ref{TF}. The author believes that using condition 3 for all $k \in I$ rather than just $k=j$, one should be able to obtain a better bound for $|I|$, enabling one to remove the $\min{(r,\sqrt{t-r+3})}$ factor.

\begin{probl}
Estimate the maximum possible size of the index set in a generalised modified set system.
\end{probl}

\section*{Acknowledgement}

The author thanks Robert Johnson for his guidance and useful feedback on a draft of this paper.

\end{document}